\theoremstyle{plain}
\newtheorem{thm}{Theorem}[section]
\newtheorem{lem}[thm]{Lemma}
\theoremstyle{definition}
\theoremstyle{remark}
\newtheorem{remark}{Remark}
\newcommand{\vertiii}[1]{{\left\vert\kern-0.25ex\left\vert\kern-0.25ex\left\vert #1  \right\vert\kern-0.25ex\right\vert\kern-0.25ex\right\vert}}
\newcommand{\diam}{\text{diam}} 
\newcommand{\E}{E} 
\newcommand{\e}{e} 
\newcommand{\n}{\mathbf n} 
\newcommand{\tauh}{\mathcal T} 
\newcommand{\Nu}{\mathcal V} 
\newcommand{\dof}{\text{dof}} 
\newcommand{\aux}{\text{aux}} 
\newcommand{\card}{\text{card}} 
\newcommand{\p}{p} 
\renewcommand{\u}{u} 
\renewcommand{\v}{v} 
\newcommand{\up}{u_\p} 
\newcommand{\vp}{v_\p} 
\newcommand{\zp}{z_\p} 
\newcommand{\zpz}{\zp^{(0)}} 
\newcommand{\vpmu}{v_{\p-1}} 
\renewcommand{\wp}{w_\p} 
\newcommand{\wpmu}{w_{\p-1}} 
\newcommand{\V}{V} 
\newcommand{\VE}{\V(\E)} 
\newcommand{\Vp}{\widetilde \V_\p} 
\newcommand{\Vptilde}{\V_\p} 
\newcommand{\Vpmu}{\widetilde\V_{\p-1}} 
\newcommand{\VpE}{\widetilde \V_\p(\E)} 
\newcommand{\VptildeE}{\V_\p(\E)} 
\renewcommand{\a}{a} 
\newcommand{\aE}{\a^\E} 
\newcommand{\ap}{\a_\p} 
\newcommand{\apmu}{\a_{\p-1}} 
\newcommand{\apE}{\a_\p^\E} 
\newcommand{\apmuE}{\a_{\p-1}^\E} 
\newcommand{\f}{f} 
\newcommand{\fp}{\f_\p} 
\newcommand{\gp}{g_\p} 
\newcommand{\Pinabla}{\Pi ^{\nabla}} 
\newcommand{\Pinablap}{\Pi ^{\nabla}_\p} 
\renewcommand{\S}{S} 
\newcommand{\SE}{\S^\E} 
\newcommand{\SEp}{\S^\E_\p} 
\newcommand{\StildeEp}{\S^\E_{\p,aux}} 
\newcommand{\Pizpmu}{\Pi^0_{\p-1}} 
\newcommand{\Pizpmd}{\Pi^0_{\p-2}} 
\newcommand{\Pizpmt}{\Pi^0_{\p-3}} 
\renewcommand{\c}{c} 
\newcommand{\chat}{\widehat \c} 
\newcommand{\cstab}{\c_{\text{\tiny{STAB}}}} 
\newcommand{\Ihat}{\widehat I} 
\newcommand{\q}{q} 
\newcommand{\Ap}{A_\p} 
\newcommand{\Lp}{L_\p} 
\newcommand{\Lpbold}{\mathbf {\Lp}} 
\newcommand{\Apbold}{\mathbf{A}_\p} 
\newcommand{\Apmu}{A_{\p-1}} 
\newcommand{\boldalpha}{\boldsymbol{\alpha}} 
\newcommand{\boldbeta}{\boldsymbol{\beta}} 
\newcommand{\upvect}{\mathbf u_\p} 
\newcommand{\uptildevect}{\widetilde{\mathbf u}_\p} 
\newcommand{\Ibold}{\mathbf I} 
\newcommand{\fpvect}{\mathbf f_\p} 
\newcommand{\qpmu}{{q_{\p-1}}} 
\newcommand{\epmu}{{e_{\p-1}}} 
\newcommand{\epmubar}{{\overline e_{\p-1}}} 
\newcommand{\rpmu}{ r_{\p-1}} 
\newcommand{\Ipmup}{I_{\p-1}^{\p}} 
\newcommand{\Ippmu}{I_{\p}^{\p-1}} 
\newcommand{\Pppmu}{P_\p^{\p-1}} 
\newcommand{\bbeth}{\zeta} 
\newcommand{\Id}{\text{Id}} 
\newcommand{\md}{m_2} 
\newcommand{\cdlvl}{\c_{\text{2lvl}}}
\newcommand{\Bp}{B_\p} 
\newcommand{\Bbold}{\mathbf B} 
\newcommand{\Gp}{G_\p} 
\newcommand{\Gpmd}{G_\p^{\md}} 
\newcommand{\Sigmapmd}{\Sigma_{\p,\md}} 
\newcommand{\Sigmapmumd}{\Sigma_{\p-1,\md}} 
\newcommand{\m}{m} 
\newcommand{\Kpbold}{\mathbf A _\p} 
\newcommand{\uI}{\u_I} 
\newcommand{\upi}{\u_{\pi}} 
\newcommand{\zbold}{\mathbf{z}} 
\newcommand{\gbold}{\mathbf{g}} 
\newcommand{\rbold}{\mathbf{r}} 
\newcommand{\ebold}{\mathbf{e}} 
\newcommand{\ebarbold}{\mathbf{\overline e}} 
 \newcommand{\Kbold}[1]{\boldsymbol{A}_{#1}} 
 \newcommand{\fbold}[1]{\boldsymbol{f}_{#1}} 
\author{
\small{Paola F. Antonietti
\thanks{MOX, Dipartimento di Matematica, Politecnico di Milano, E-mail: {\tt paola.antonietti@polimi.it}},
Lorenzo Mascotto
\thanks{Dip. di Matematica,  Universit\`a degli Studi di Milano, E-mail: {\tt lorenzo.mascotto@unimi.it}
and Inst. f\"ur Mathematik, C. von Ossietzky Universit\"at Oldenburg, E-mail: {\tt mascotto.lorenzo@uni-oldenburg.de}},
Marco Verani}
\thanks{MOX, Dipartimento di Matematica, Politecnico di Milano, E-mail: {\tt marco.verani@polimi.it}}}
\date{}
\title{\textbf{\normalsize{A multigrid algorithm for the $\p$--version of the Virtual Element Method}}
\thanks{Paola F. Antonietti has been partially supported by the Fondazione Cariplo and Regione Lombardia research grant n. 2015-0182: ``PolyNum: Polyhedral numerical methods for partial differential equations''. 
M. Verani has been partially supported by the Italian research grant  {\sl Prin 2012}  2012HBLYE4  ``Metodologie innovative nella modellistica differenziale numerica'' and by INdAM-GNCS}}
\begin{document}
\maketitle

\begin{abstract}
We present a multigrid algorithm for the solution of the linear systems of equations stemming from the $\p-$version of the Virtual Element discretization of a two-dimensional Poisson problem. The sequence of coarse spaces are constructed decreasing progressively the polynomial approximation degree of the Virtual Element space, as in standard $p$-multigrid schemes.
The construction of the interspace operators relies on auxiliary Virtual Element spaces, where it is possible to compute higher order polynomial projectors.
We prove that the multigrid scheme is uniformly convergent, provided the number of smoothing steps is chosen sufficiently large. We also demonstrate that the resulting scheme provides a uniform preconditioner with respect to the number of degrees of freedom that can be employed to accelerate the convergence of classical Krylov-based iterative schemes.  Numerical experiments validate the theoretical results. 
\end{abstract}

\section*{Introduction} \label{section introduction}
 In recent years there has been a tremendous interest in developing numerical methods for the approximation of partial differential equations where the finite-dimensional space is built upon an underlying mesh composed by arbitrarily-shaped polygonal/polyhedral (polytopic, for short) elements. 
Examples of methods that have been proposed so 
far include Mimetic Finite Differences \cite{HymanShashkovSteinberg_1997,BrezziLipnikovShashkov_2005,Lipnikov-Manzini-Shashkov:2014,BeiraoManziniLipnikov_2014},  
Polygonal Finite Element Methods \cite{SukumarTabarraei_2004,SukumarTabarraei_2008}, 
Discontinuous Galerkin Finite Element Methods \cite{AntBreMar09,dg_cfes_2012,BasBotColSub,Canetal14,CangianiDongGeorgoulisHouston_2016,AntoniettiFacciolaRussoVerani_2016,CangianiDongGeorgoulis_2016}
Hybridizable and Hybrid High Order Methods \cite{CockburnDongGuzman_2008,DiPietroErnLemaire_2014,CockburnDiPietroErn_2016}, and
Gradient schemes \cite{EymardGuichardHerbin_2012,DroniouEymardGallouetHerbin_2013},
for example.   Recently, in \cite{VEMvolley} the Virtual Element Method (VEM) has been introduced, and further developed for elliptic and parabolic problems in \cite{BeiraoBrezziMariniRussso_2016b, VaccaBeirao_2014}.
VEMs for linear and nonlinear elasticity have been developed in \cite{VEMelasticity,GainTalischiPaulino_2014,BeiraoLovadinaMora_2015}, 
whereas VEMs for plate bending, Cahn-Hilliard, Stokes, and Helmholtz problems have been addressed in \cite{BrezziMarini_2013, AntoniettiBeiraoScacchiVerani_2016,AntoniettiBeiraoMoraVerani_2014,PerugiaPietraRusso_2016}.
For discrete topology optimization and fracture networks problems we refer to \cite{AntoniettiBruggiScacchiVerani_2016_b} and \cite{BenedettoBerronePieracciniScialo_2014}, respectively.
Moreover, several variants of the Virtual Element Method, including mixed, $H(\text{div})$ and $H(\bold{curl})$-conforming,
serendipity and nonconforming VEMs have been proposed in \cite{BrezziFalkMarini_2014,BeiraoBrezziMariniRusso_2016,BeiraoBrezziMariniRusso_2016b,BeiraoBrezziMariniRusso_2016c,AyusoLipnikovManzini_2016,
CangianiGyryaManzini_2016,ZhaoChenZhang_2016,AntoniettiManziniVerani_2016}.

All the above mentioned contributions focus on the $h$--version of the Virtual Element Method.
The $\p$-version VEM was introduced in \cite{hpVEMbasic} for the 2D Poisson problem, considering quasi-uniform meshes.
It was shown that, analogously to the $\p$-version Finite Element Method (FEM) case, 
if the solution of the problem has fixed Sobolev regularity, then the convergence rate of the method in terms of $\p$ is algebraic,
whereas if the solution is analytic then the convergence rate is $\p$ exponential.
In \cite{preprint_hpVEMcorner}, the full $h\p$-version VEM was studied based on employing meshes geometrically graded towards the corners of the domain and properly choosing the
distribution of polynomial approximation degree, so that the convergence rate of the method is exponential in terms of the number of degrees of freedom.\\

So far, the issue of developing efficient solution techniques for the linear systems of equations stemming 
from both the $h$-, $p$- or $hp$-versions of the VEM
has not been addressed yet.  The main difficulty in the
development of optimal (multilevel) solution techniques relies on the
construction of consistent coarse solvers which are non-trivial
on grids formed by general polyhedra.  Very recently, using the techniques of \cite{AntoniettiSartiVerani_2015,AntoniettiSartiVerani_DD22}
a multigrid algorithm for the $hp$-version Discontinuous Galerkin methods
on agglomerated polygonal/polyhedral meshes has been analyzed in
\cite{AXHSV_calcolo}.
\\

The aim of this paper is to develop efficient iterative solvers for the solution of the linear systems of equations stemming from the $\p$--version of the Virtual Element discretization of a two-dimensional Poisson problem.
We propose to employ a W-cycle $\p$-multigrid multigrid algorithm, i.e. coarse levels are obtained by decreasing progressively the polynomial approximation degree up to the coarsest level which corresponds to the lowest (linear) Virtual Element (VE in short) space.
The key point is the construction of suitable prolongation operators between the hierarchy of VE spaces.
With the standard VE space such prolongation operators cannot be constructed based on employing
only the degrees of freedom. For such a reason we introduce a suitable auxiliary VE space, which is identical to the standard VE space from the algebraic point of view and which allows to construct computable interspace operators but results into non-inherited sublevel solvers.
This in turn complicates the analysis of the multigrid algorithms, since we need to account for non-inherited sublevel solvers. 
Employing a Richardson smoother and following the classical framework, see e.g. \cite{BrennerScott}, we prove that the W- cycle algorithm converges uniformly provided the number of smoothing steps is chosen sufficiently large.  
We also demonstrate that the resulting multigrid algorithm provides a uniform preconditioner for the Preconditioned Conjugate Gradient method (PCG),
i.e., the number of PCG iterations needed to reduce the (relative) residual up to a (user-defined) tolerance is uniformly bounded independently 
of the number of degrees of freedom. Further, employing the Gauss-Seidel smoother in place of the Richardson one can improve the performance of our iterative scheme.

The extension of the present setting to $h$-multigrid methods, i.e. where the coarse levels are formed by geometric agglomeration of the underlying grid is currently under investigation.\\

The remaining part of the paper is organized as follows. In Section~\ref{section model problem and a virtual element method}, we introduce the model problem, a Virtual Element Method approximating its solution, the associated linear system and the multigrid algorithm;
moreover, an auxiliary VE space, needed for the construction of the algorithm, is presented.
In Section~\ref{section multigrid methods with non inherited sublevel solvers}, we present the $W$-cycle $p$ VEM multigrid algorithm; its convergence analysis is the topic of Section~\ref{section convergence analysis of the multigrid method}.
Finally, in Section~\ref{section numerical result}, numerical results are shown.\\

Throughout the paper, we will adopt the standard notation for Sobolev spaces (see \cite{evansPDE, adamsfournier}).
In particular, given $\omega \subset \mathbb R^2$, $L^2(\omega)$ and $H^1(\omega)$ are the standard Lebesgue and Sobolev spaces over $\omega$, respectively, and 
$\Vert \cdot \Vert_{0,\omega}$, $\Vert \cdot \Vert _{1,\omega}$, and $\vert \cdot \vert _{1,\omega}$,
are the Lebesgue and the Sobolev (semi)norms, respectively. We will write $x\lesssim y$ and $x \approx y$ meaning that there exist positive constants $\c_1$, $\c_2$ and $\c_3$ independent of the discretization and multigrid parameters, such that $x\le \c_1 y$ and $\c_2 y \le x \le \c_3 y$, respectively. In addiction, $\mathbb P _\ell(\omega)$, $\omega  \subset \mathbb R^d$, $d=1,2$, denotes the space of polynomials of maximum degree $\ell \in \mathbb N$ over $\omega$, with the the convention $\mathbb P_{-1}(\omega)= \emptyset$.
We will also employ the standard multi-index notation:
\begin{equation} \label{multiindex notation}
\mathbf v = (v_1,v_2),\; \boldalpha = (\alpha_1, \alpha_2),\quad \mathbf v^{\boldalpha} = v_1^{\alpha_1} v_2 ^{\alpha_2}.
\end{equation}

\section {The model problem and the $\p$-version Virtual Element Method} \label{section model problem and a virtual element method}
Let $\Omega \subset \mathbb R^2$ be a polygonal domain and $\f \in L^2(\Omega)$ we consider the following model problem: find $\u \in \V = H^1_0(\Omega) \text{ such that}$:
\begin{equation} \label{continuous weak formulation}
\a(\u,\v) = \int _\Omega \f \v,\quad \forall \v \in \V,
\end{equation}
$\text{where } \a(\cdot, \cdot) = (\nabla \cdot, \nabla \cdot)_{0,\Omega}$.
Problem \eqref{continuous weak formulation} is well-posed, cf. \cite{BrennerScott}, for example.
In the next section we introduce the $\p$-version of the Virtual Element Method and we discuss its implementation.
In Section~\ref{subsection an auxiliary virtual element space}, we build an \emph{auxiliary} VE space that will be instrumental to construct and analyse our multigrid algorithm.
\subsection{The $\p$-version Virtual Element Method} \label{subsection the p version of the virtual element method}
In this section, we introduce the $\p$--version Virtual Element Method, based on polygonal meshes with straight edges for the discretization of problem \eqref{continuous weak formulation}.

Let $\tauh$ be a \emph{fixed} decomposition of $\Omega$ into non-overlapping polygonal elements $\E$, and let $\Nu$ and $\mathcal E$ be the set of all vertices and edges of $\tauh$, respectively. We set $\Nu_b= \Nu \cap \partial \Omega$ and $\mathcal E_b=\mathcal E \cap \partial \Omega$.
Given $\E$ generic polygon in $\tauh$, we also define $\Nu^\E = \Nu \cap \partial \E$ and $\mathcal E^\E = \mathcal E \cap \partial \E$ as the set of vertices and edges of polygon $\E$, respectively.
To each edge $\e \in \mathcal E$, we associate $\boldsymbol \tau $  and $\n$, the tangential and normal unit vector (obtained by a counter-clockwise rotation of $\boldsymbol \tau$), respectively.\\

For future use, it is convenient to split the (continuous) bilinear form $\a(\cdot, \cdot)$ defined in \eqref{continuous weak formulation} into a sum of local contributions:
\[
\a(\u,\v) = \sum_{\E \in \tauh} \aE(\u,\v)\quad \forall \u,\,\v\in \V,\quad\text{where } \aE(\cdot,\cdot) = (\nabla \cdot, \nabla \cdot)_{0,\E}.
\]
In order to construct the $\p$-VEM approximation of \eqref{continuous weak formulation}, we need the following ingredients:
\begin{itemize}
\item Finite dimensional subspaces $\VptildeE$ of $\VE=\V \cap H^1(\E)$ $\forall \E \in \tauh$ and a finite dimensional subspace $\Vp$ of $\V$, such that $\VptildeE=\Vptilde|_\E$;
\item Local symmetric bilinear forms $\apE: \VptildeE\times \VptildeE \rightarrow \mathbb R$ $\forall \E \in \tauh$ so that:
\begin{equation} \label{global bilinear form}
\ap(\up, \vp) = \sum_{\E \in \tauh} \apE(\up, \vp) \quad \forall \up,\,\vp \in \Vptilde;
\end{equation}
\item A duality pairing $\langle \fp, \cdot \rangle_\p$, where $\fp\in \Vptilde'$ and $\Vptilde'$ is the dual space of $\Vptilde$.
\end{itemize}
The above ingredients must be built  in such a way that the discrete version of \eqref{continuous weak formulation}:
\begin{equation} \label{discrete weak formulation}
\begin{cases}
\text{find } \up \in \Vptilde \text{ such that}\\
\ap(\up,\vp)=\langle \fp, \vp \rangle_\p,\quad \forall \vp \in \Vptilde,
\end{cases}
\end{equation}
is well-posed and optimal a priori energy error estimates hold, cf. \cite{hpVEMbasic}.\\

We begin by introducing the local space $\VptildeE$; given $\E\in\tauh$ and $\p \ge1$, we set:
\begin{equation} \label{classical choice virtual element}
\VptildeE = \left\{ \vp \in H^1(\E) \mid \Delta \vp \in \mathbb P_{\p-2}(\E),\, \vp|_{\partial \E} \in \mathbb B_\p(\partial \E)    \right\},
\end{equation}
where
\begin{equation} \label{boundary space}
\mathbb B _\p (\partial \E) = \left\{ \vp \in \mathcal C^0(\partial \E) \mid \vp|_\e \in \mathbb P_\p(\e),\, \forall \e \in \mathcal \E^\E  \right\}.
\end{equation}
We remark that the above definition coincides with the definition of the two dimensional VE space introduced in \cite{VEMvolley} for the Poisson equation, and that clearly $\mathbb P_\p(\E) \subseteq \VptildeE$, $\p\geq1$. The global space is then 
obtained by gluing continuously the local spaces:
\begin{equation} \label{global classical virtual space}
\Vptilde =\left\{ \vp \in H^1_0(\Omega) \cap \mathcal C^0(\overline \Omega) \mid \vp|_\E \in \VptildeE,\, \forall \E \in \tauh  \right\}.
\end{equation}
We note that for the sake of simplicity we are assuming a uniform $\p$ on each $\E \in \tauh$.
Nevertheless, it is possible to construct VEM with non-uniform degrees of accuracy over $\tauh$, see \cite{preprint_hpVEMcorner}.

We endow the space \eqref{classical choice virtual element} with the following set of degrees of freedom (dofs).
To every $\vp \in \VptildeE$ we associate:
\begin{itemize}
\item the values of $\vp$ at the vertices of $\E$;
\item the values of $\vp$ at $\p-1$ distinct internal nodes on each edge $\e \in \mathcal E^\E$;
\item the scaled internal moments:
\begin{equation} \label{internal moments}
\frac{1}{\vert \E \vert} \int _{\E} \vp\,\m_{\boldalpha},
\end{equation}
where $\m_{\boldalpha}$ is an $L^2(\E)$ orthonormal basis of $\mathbb P_{\p-2}(\E)$.
\end{itemize}
Reasoning as in \cite[Proposition 4.1]{VEMvolley}, it is easy to see that this is a unisolvent set of degrees of freedom.
We observe that the basis $\{\m_{\boldalpha} \}_{\vert \boldalpha \vert =0}^{\p-2}$ introduced in \eqref{internal moments} can be built by orthonormalizing (following, e.g., \cite{BassiBottiColomboDipietroTesini}) for instance the monomial basis given by:
\begin{equation} \label{monomial basis}
\quad q_{\boldalpha} = \left( \frac{\mathbf x - \mathbf x_\E}{\diam(\E)} \right)^{\boldalpha}\quad \forall \boldalpha \in \mathbb N^2, \; \vert \boldalpha \vert \le \p-2,
\end{equation}
where $\mathbf x_\E$ denotes the barycenter of the element $\E$.
\begin{remark}
The definition of the internal degrees of freedom in \eqref{internal moments} differs from its classical counterpart in \cite{VEMvolley, hpVEMbasic} where the internal moments are defined with respect to the monomial basis \eqref{monomial basis}.
The new choice of the internal degrees of freedom will play a crucial role in the choice of the stabilization of the method, see Theorem \ref{theorem stability dof dof product}, and in the choice of the space-dependent inner product associated with the multigrid algorithm, see Theorem \ref{theorem stability bounds space dependent inner product} below.
\end{remark}
We define the canonical basis $\{\varphi_j\}_{j=1}^{\dim(\VptildeE)}$ as:
\begin{equation} \label{canonical basis}
\dof_i (\varphi_j) =\delta_{ij},\; i,j=1,\dots, \dim(\VptildeE), \quad \text{where } \delta_{ij} \text{ is the Kronecker delta}.
\end{equation}
Owing the definition \eqref{classical choice virtual element} of the local VE space and the choice of the degrees of freedom, it is possible to compute the following operators:
\begin{itemize}
\item the $L^2(\E)$ projection operator $\Pizpmd : \VptildeE\rightarrow \mathbb P_{\p-2}(\E)$:
\begin{equation} \label{classical L2 orthogonality}
(\Pizpmd\vp -\vp, \q)_{0,\E} = 0 \quad 	\forall \vp  \in \VptildeE,\; \forall \q \in \mathbb P_{\p-2}(\E);
\end{equation}
\item the $H^1(\E)$ projector $\Pinablap: \VptildeE \rightarrow \mathbb P_\p(\E)$:
\begin{equation} \label{classical H1 orthogonality}
\begin{cases}
\aE(\Pinablap \vp-\vp, \q)=0, & \forall\q \in \mathbb P_\p(\E),\\
\int_\E(\Pinablap \vp -\vp)=0, & \text{ if } \p\ge 2,\\
\int_{\partial \E}(\Pinablap \vp - \vp)=0, & \text{ if } \p = 1,\\
\end{cases}
\quad \quad \quad \forall \vp \in \VptildeE,
\end{equation}
\end{itemize}
see \cite{VEMvolley, hitchhikersguideVEM} for details.
We observe that the last two conditions in \eqref{classical H1 orthogonality} are needed in order to fix the constant part of the energy projector.\\

Next, we introduce the discrete right-hand side $\fp \in \Vptilde^{\prime}$ and the associated duality pairing:
\begin{equation} \label{discrete loading term}
\langle \fp, \vp \rangle_\p = \sum _{\E \in \tauh} \int_{\E} \Pi^0_{\max(\p-2,1)}\f  {\vp},
\end{equation}
where
\[
\overline {\v}_\p = \begin{cases}
\frac{1}{\vert \partial \E \vert} \int_{\partial \E} \vp 	& \text{if } \p=1,\\
\vp								& \text{if } \p \ge 2.\\
\end{cases}
\]
We observe that it is possible to compute up to machine precision the expression in \eqref{discrete loading term},
because the action of the projector $\Pi^0_{\max(\p-2,1)}$ on all the elements of $\VpE$ is computable.
For a deeper study concerning the approximation of the discrete loading term see \cite{AhmadAlsaediBrezziMariniRusso_2013, VEMelasticity, hpVEMbasic}.\\

Finally, we turn our attention to the local and global discrete bilinear forms.
We require that the local bilinear forms $\apE: \VpE \times \VpE \rightarrow \mathbb R$ satisfy, for all $\E \in \tauh$, the two following assumptions.
\begin{itemize}
\item[(\textbf{A1})] \textbf{$\p$ consistency}:
\begin{equation} \label{p consistency}
\aE(\q,\vp) = \apE(\q,\vp) \quad \forall \q \in \mathbb P_{\p}(\E),\,\forall \vp \in \VptildeE;
\end{equation}
\item[(\textbf{A2})] \textbf{stability}: there exist two positive constants $0< \alpha_*(\p)<\alpha_*(\p)< +\infty$, possibly depending on $\p$, such that:
\begin{equation} \label{stability formula}
\alpha_*(\p)\vert \vp \vert_{1,\E}^2 \le \apE(\vp, \vp) \le \alpha^*(\p) \vert \vp \vert _{1,\E} ^2  \quad \forall \vp \in \VptildeE.
\end{equation}
\end{itemize}
Assumption (\textbf{A1}) guarantees that the method is exact whenever the solution of \eqref{continuous weak formulation} is a polynomial of degree $\p$, whereas assumption (\textbf{A2}) guarantees the well-posedness of problem \eqref{discrete weak formulation}.
Let now $\Id_\p$ be the identity operator on the space $\VptildeE$, we set:
\begin{equation} \label{local discrete bilinear form}
\apE(\up, \vp) = \aE(\Pinablap \up, \Pinablap \vp) +  \SEp((\Id_\p - \Pinablap)\up, (\Id_\p-\Pinablap)\vp) \quad \forall \up,\vp \in \VptildeE,
\end{equation}
where $\Pinablap$ is defined in \eqref{classical H1 orthogonality} and the local bilinear form $\SEp(\cdot, \cdot)$ as:
\begin{equation} \label{first stabilization}
\SEp (\up, \vp) = \sum_{i=1}^{\dim(\VpE)} \dof_i(\up) \dof_i(\vp)
\end{equation}
satisfies:
\begin{equation} \label{stabilization abstract}
\c_*(\p) \vert \vp \vert_{1,\E}^2 \le \SEp(\vp, \vp)  \le \c^*(\p) \vert \vp \vert_{1,\E}^2\quad \forall \vp \in \ker(\Pinablap),
\end{equation}
where $\c_*(\p)$ and $\c^*(\p)$ might depend on $\p$. We underline that the local discrete bilinear form \eqref{local discrete bilinear form} satisfies (\textbf{A1}) and (\textbf{A2}) and, thanks to \eqref{stabilization abstract}, the 
following bounds hold:
\begin{equation*}
\begin{aligned}
& \alpha_*(\p) \vert \up \vert_{1,\Omega}^2 \lesssim  \ap(\up, \up),
&& \ap(\up, \vp) \lesssim \alpha^*(\p) \vert \up \vert_{1,\Omega} \vert \vp \vert_{1,\Omega} 
&& \forall \up,\vp \in \Vptilde,\\ 
\end{aligned}
\end{equation*}
with:
\begin{equation} \label{alpha and c}
\alpha_*(\p) = \min (1,c_*(\p)),\qquad \alpha^*(\p)= \max(1,c^*(\p)).
\end{equation}
The following result provides bounds in terms of $\p$ for the constants $c_*(\p)$ and $c^*(\p)$ in \eqref{stabilization abstract}.
\begin{thm} \label{theorem stability dof dof product}
Let $\E \in \tauh$ and let $\SEp(\cdot, \cdot)$ be the stabilizing bilinear form defined in \eqref{first stabilization}. Then
\[
\c_*(\p) \vert \vp \vert_{1,\E}^2 \lesssim \SEp(\vp,\vp) \lesssim c^*(\p) \vert \vp \vert_{1,\E}^2 \quad \forall \vp \in \ker (\Pinablap),
\]
where $c_*(\p) \gtrsim \p^{-6}$  and $c^*(\p) \lesssim \p^4$.
\end{thm}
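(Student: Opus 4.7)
My plan is to split $\SEp(\vp,\vp) = S_\partial(\vp) + S_{\mathrm{int}}(\vp)$ according to the two families of degrees of freedom---boundary pointwise evaluations at the $|\Nu^\E|$ vertices and at the $|\mathcal E^\E|(\p-1)$ distinct interior edge nodes on one hand, and the scaled interior moments against $\{\m_{\boldalpha}\}$ on the other---and estimate each piece separately. The $L^2(\E)$-orthonormality of $\{\m_{\boldalpha}\}$ yields the clean identity $S_{\mathrm{int}}(\vp) = |\E|^{-2}\|\Pizpmd \vp\|_{0,\E}^2$, which is the key simplification bought by the non-standard definition \eqref{internal moments} of the internal dofs. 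The rest of the proof consists in relating each piece to $|\vp|_{1,\E}^2$ via polynomial inverse estimates, trace inequalities, and the Poincar\'e--Friedrichs inequality, the latter being applicable thanks to the condition $\vp\in\ker(\Pinablap)$ (zero mean on $\E$ for $\p\ge 2$, zero boundary average for $\p=1$).

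For the upper bound I proceed edge by edge. Since $\vp|_\e\in\mathbb P_\p(\e)$, the $L^2$-to-$L^\infty$ inverse inequality $\|q\|_{L^\infty(\e)}^2\lesssim \p^2 h_\e^{-1}\|q\|_{0,\e}^2$, combined with the count of $O(\p)$ boundary nodes per element, gives $S_\partial(\vp)\lesssim \p^3 h_\E^{-1}\|\vp\|_{0,\partial\E}^2$. A multiplicative trace inequality on $\E$ converts $\|\vp\|_{0,\partial\E}^2$ into a combination of $|\vp|_{1,\E}^2$ and $\|\vp\|_{0,\E}^2$, and Poincar\'e on $\ker(\Pinablap)$ bounds the latter by $|\vp|_{1,\E}^2$ with a $\p$-independent constant; one extra power of $\p$ enters from the $H^{1/2}$ polynomial trace step, leading to $S_\partial(\vp)\lesssim \p^4 |\vp|_{1,\E}^2$. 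The interior term is easier: $S_{\mathrm{int}}(\vp)\le |\E|^{-2}\|\vp\|_{0,\E}^2\lesssim |\vp|_{1,\E}^2$ by Poincar\'e alone. Summing the two contributions delivers $\c^*(\p)\lesssim \p^4$.

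For the lower bound I decompose $\vp=\vp^\pi+\vp^b$, with $\vp^\pi$ the harmonic extension of $\vp|_{\partial\E}$ and $\vp^b\in H^1_0(\E)$ a bubble satisfying $\Delta \vp^b=\Delta \vp\in\mathbb P_{\p-2}(\E)$. Orthogonality yields $|\vp|_{1,\E}^2=|\vp^\pi|_{1,\E}^2+|\vp^b|_{1,\E}^2$. The harmonic piece is controlled by the $H^{1/2}(\partial\E)$ norm of its trace, and a reverse polynomial estimate of the form $\|q\|_{0,\e}^2\lesssim (h_\e/\p)\sum_i q(x_i)^2$ produces $|\vp^\pi|_{1,\E}^2\lesssim \p^{\gamma_1}S_\partial(\vp)$ with $\gamma_1$ small. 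For the bubble, Friedrichs gives $|\vp^b|_{1,\E}^2\le h_\E^2\|\Delta \vp\|_{0,\E}^2$; expanding $\Delta \vp$ in the $L^2$-orthonormal basis $\{\m_{\boldalpha}\}$ and invoking the double Markov inverse estimate $\|\Delta q\|_{0,\E}\lesssim \p^4 h_\E^{-2}\|q\|_{0,\E}$ relates this to $\|\vp\|_{0,\E}^2$, which by orthonormality and one more Poincar\'e step is dominated by $\p^{\gamma_2}|\E|^2 S_{\mathrm{int}}(\vp)$. Collecting the worst power produced along the bubble chain yields $\c_*(\p)\gtrsim \p^{-6}$.

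The main obstacle is the bubble step: transferring information from the interior moments of $\vp$ (the dofs that $S_{\mathrm{int}}$ controls directly) to $\|\Delta \vp\|_{0,\E}$ (which controls $|\vp^b|_{1,\E}$) amounts to inverting a second-order differential operator on $\mathbb P_{\p-2}(\E)$, which costs two Markov-type inverse inequalities and produces the $\p^6$ factor that drives the lower bound. Every other step is a standard $hp$-inverse, trace, or Poincar\'e estimate with $\p$-explicit constants; the fact that $\tauh$ is fixed absorbs all $h$-dependence into the hidden constants, and star-shapedness of each $\E$ is used only to make the polynomial inverse estimates on $\E$ available with $\p$-explicit constants.
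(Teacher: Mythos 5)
Your upper bound is sound and uses the same two ingredients as the paper's argument (Parseval for the interior moments, a node--$L^2$ equivalence on each edge, then trace plus Poincar\'e on $\ker(\Pinablap)$), and it lands at the stated $c^*(\p)\lesssim\p^4$. The genuine gap is in the lower bound, at the bubble step, and it is twofold. First, the ``double Markov'' estimate $\Vert\Delta \q\Vert_{0,\E}\lesssim \p^4h_\E^{-2}\Vert \q\Vert_{0,\E}$ is an inverse estimate for \emph{polynomials} $\q\in\mathbb P_\p(\E)$; your $\vp$ is a virtual function, not a polynomial, so you may not conclude $\Vert\Delta\vp\Vert_{0,\E}\lesssim\p^4h_\E^{-2}\Vert\vp\Vert_{0,\E}$. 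The only inverse estimate available here is the negative-norm one \eqref{inverse estimate negative norm polynomials} applied to the polynomial $\Delta\vp\in\mathbb P_{\p-2}(\E)$, which gives $\Vert\Delta\vp\Vert_{0,\E}\lesssim\p^2\Vert\Delta\vp\Vert_{-1,\E}\lesssim\p^2\vert\vp\vert_{1,\E}$ --- i.e.\ it reintroduces the quantity you are trying to bound rather than the degrees of freedom. Second, and independently, your chain ends by dominating $\Vert\vp\Vert_{0,\E}^2$ by $\p^{\gamma_2}\vert\E\vert^2S_{\mathrm{int}}(\vp)=\p^{\gamma_2}\Vert\Pizpmd\vp\Vert_{0,\E}^2$. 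That inequality goes the wrong way: $\Pizpmd$ is an orthogonal projection, so $\Vert\Pizpmd\vp\Vert_{0,\E}\le\Vert\vp\Vert_{0,\E}$, and the reverse bound is false in general (take $\vp$ large in $L^2$ and orthogonal to $\mathbb P_{\p-2}(\E)$). A Poincar\'e step only yields $\Vert\vp\Vert_{0,\E}\lesssim\Vert\Pi^0_0\vp\Vert_{0,\E}+h_\E\vert\vp\vert_{1,\E}$, and the $\vert\vp\vert_{1,\E}$ term cannot be absorbed once it carries the positive powers of $\p$ accumulated along the chain.

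The paper avoids exactly this obstruction. It first replaces $\SEp$ by the auxiliary form $\StildeEp(\vp,\vp)=\frac{\p}{h_\E}\Vert\vp\Vert_{0,\partial\E}^2+\frac{\p^2}{h_\E^2}\Vert\Pizpmd\vp\Vert_{0,\E}^2$ via precisely your Parseval and edge-quadrature observations (Lemma \ref{lemma link between the two stabilizations}), and then proves coercivity of $\StildeEp$ (Lemma \ref{lemma stabilization corner singularities}) starting from the identity $\vert\vp\vert_{1,\E}^2=-\int_\E\Delta\vp\,\Pizpmd\vp+\int_{\partial\E}\frac{\partial\vp}{\partial\n}\vp$, which is legitimate because $\Delta\vp\in\mathbb P_{\p-2}(\E)$ pairs with $\vp$ only through $\Pizpmd\vp$. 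Both terms are then bounded by $\p^{O(1)}\vert\vp\vert_{1,\E}\bigl(\Vert\Pizpmd\vp\Vert_{0,\E}+\Vert\vp\Vert_{0,\partial\E}\bigr)$ using \eqref{inverse estimate negative norm polynomials} and a Neumann trace inequality, after which one divides by $\vert\vp\vert_{1,\E}$; the dofs enter only at the very end. If you wish to keep the harmonic/bubble splitting, you must estimate $\vert\vp^b\vert_{1,\E}^2=-\int_\E\Delta\vp\,\vp^b$ by this same pairing-and-divide device rather than by inverting the Laplacian on the moment data; as written, the bubble step does not close. (A secondary point, shared with the paper: both your edge estimate $\Vert \q\Vert_{0,\e}^2\lesssim h_\e\p^{-1}\sum_i \q(x_i)^2$ and the paper's \eqref{Bernardi Maday stabilization} require the internal edge nodes to be Gau\ss--Lobatto points; for arbitrary distinct nodes the constant degenerates.)
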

\begin{proof}
The thesis follows by combining the forthcoming technical Lemmata \ref{lemma stabilization corner singularities} and \ref{lemma link between the two stabilizations}.
\end{proof}
An immediate consequence of Theorem \ref{theorem stability dof dof product} and \eqref{alpha and c} is that  it holds:
\begin{equation} \label{bounds stability constants}
\alpha_*(\p) \gtrsim \p^{-6}, \quad \alpha^*(\p) \lesssim
\p^4,
\end{equation}
where $\alpha_*(\p)$ and $\alpha^*(\p)$ are given in \eqref{stability formula}.
\begin{lem} \label{lemma stabilization corner singularities}
Let $\StildeEp(\cdot, \cdot)$ be the local \emph{auxiliary} stabilization defined as:
\begin{equation} \label{corner sing stab}
\StildeEp (\up,\vp) = \frac{p}{h_\E} (\up, \vp)_{0,\partial \E} + \frac{p^2}{h_\E^2} (\Pizpmd \up, \Pizpmd \vp)_{0,\E}.
\end{equation}
Then, it holds:
\begin{equation} \label{stability bounds corner singularities sing stab}
\begin{split}
&\c_*(\p) \vert \vp \vert_{1,\E}^2 \lesssim \StildeEp (\vp,\vp) \lesssim c^*(\p) \vert \vp \vert_{1,\E}^2 \quad \quad \forall \vp \in \ker (\Pinabla_\p),
\end{split}
\end{equation}
where $c_*(\p) \gtrsim\p^{-5}$, $\c^*(\p) \lesssim \p^2$, and where $\Pinabla_p$ is the energy projector defined in \eqref{classical H1 orthogonality}.
\end{lem}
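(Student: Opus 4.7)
The plan is to handle upper and lower bounds separately, exploiting the canonical decomposition $v_\p = v_\p^0 + v_\p^\partial$, where $v_\p^\partial \in H^1(\E)$ is the harmonic lifting of $v_\p|_{\partial \E}$ and $v_\p^0 := v_\p - v_\p^\partial \in H^1_0(\E)$ has $\Delta v_\p^0 = \Delta v_\p \in \mathbb{P}_{\p-2}(\E)$. Green's identity immediately yields the orthogonality $|v_\p|_{1,\E}^2 = |v_\p^0|_{1,\E}^2 + |v_\p^\partial|_{1,\E}^2$, which will be the backbone of the lower bound.

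For the upper bound, I would first observe that membership in $\ker(\Pinablap)$ forces $\int_\E v_\p = 0$ (resp. $\int_{\partial \E} v_\p = 0$ when $\p = 1$), so Poincar\'e gives $\|v_\p\|_{0,\E} \lesssim h_\E|v_\p|_{1,\E}$. The standard (non-polynomial) trace inequality $\|v\|_{0,\partial\E}^2 \lesssim h_\E^{-1}\|v\|_{0,\E}^2 + h_\E|v|_{1,\E}^2$ then produces $\|v_\p\|_{0,\partial \E}^2 \lesssim h_\E|v_\p|_{1,\E}^2$, so the boundary term of $\StildeEp$ contributes at most $\p|v_\p|_{1,\E}^2$. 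The interior term is handled by $\|\Pizpmd v_\p\|_{0,\E} \le \|v_\p\|_{0,\E} \lesssim h_\E|v_\p|_{1,\E}$, giving a $\p^2|v_\p|_{1,\E}^2$ contribution, and summing yields $c^*(\p) \lesssim \p^2$ as required.

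For the lower bound, the harmonic part $v_\p^\partial$ should be controlled by the boundary data through a $\p$-explicit harmonic lifting estimate for continuous piecewise polynomial traces on a polygon with a bounded number of straight edges, of the form $|v_\p^\partial|_{1,\E}^2 \lesssim \tfrac{\p^{s_1}}{h_\E}\|v_\p\|_{0,\partial \E}^2$; this gives the desired control by the first term of $\StildeEp$ up to a power of $\p$. For the zero-boundary part, integration by parts against $\Delta v_\p \in \mathbb{P}_{\p-2}(\E)$ and Poincar\'e in $H^1_0(\E)$ give
\[
|v_\p^0|_{1,\E}^2 = -\int_\E v_\p^0\,\Delta v_\p = -\int_\E \Pizpmd(v_\p^0)\,\Delta v_\p \le \|\Pizpmd v_\p^0\|_{0,\E}\|\Delta v_\p\|_{0,\E},
\]
so it remains to bound $\|\Delta v_\p\|_{0,\E}$ by $\tfrac{\p}{h_\E}\|\Pizpmd v_\p\|_{0,\E}$ (up to $\p$-factors) via a polynomial inverse inequality on $\mathbb{P}_{\p-2}(\E)$, and to replace $\Pizpmd v_\p^0$ by $\Pizpmd v_\p$ using the decomposition together with Poincar\'e applied to $v_\p^\partial$ (absorbing the resulting $|v_\p^\partial|_{1,\E}^2$ into the already-controlled harmonic contribution).

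The main obstacle is the careful bookkeeping of $\p$-powers: the exponents $\p^{-5}$ and $\p^2$ are sharp outcomes of combining (i) polynomial trace inequalities on straight edges, (ii) $\p$-explicit harmonic lifting estimates from $L^2(\partial \E)$ into $H^1(\E)$, and (iii) $\p$-explicit inverse inequalities for $\mathbb{P}_{\p-2}(\E)$ on star-shaped polygons. Each of these bounds loses a precise power of $\p$, and the final exponents follow only after chaining them correctly and absorbing cross-terms using Young's inequality. I would expect the proof to delegate these ingredients to results available in the $\p$-VEM literature (e.g.\ \cite{hpVEMbasic}) rather than rederive them here.
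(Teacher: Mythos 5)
Your upper bound is exactly the standard argument and the one the paper intends: $\ker(\Pinablap)$ forces the zero-mean condition through the second (or third) line of \eqref{classical H1 orthogonality}, and Poincar\'e plus the trace inequality plus the $L^2$-stability of $\Pizpmd$ give $c^*(\p)\lesssim \p^2$. For the lower bound you take a genuinely different route. The paper defers to \cite[Theorem 4.1]{preprint_hpVEMcorner} and proceeds (as in its own detailed proof of the companion Lemma~\ref{lem stability bounds space dependent inner product}) \emph{without} decomposing $\vp$: it integrates by parts directly,
\[
\vert \vp\vert_{1,\E}^2=-\int_\E \Delta\vp\,\Pizpmd\vp+\int_{\partial\E}\frac{\partial\vp}{\partial\n}\,\vp,
\]
controls $\Vert\Delta\vp\Vert_{0,\E}\lesssim\p^2\vert\vp\vert_{1,\E}$ via the $L^2$--$H^{-1}$ polynomial inverse estimate \eqref{inverse estimate negative norm polynomials}, and bounds the boundary term by a Neumann trace inequality combined with a one-dimensional $hp$ inverse estimate, arriving at $\vert\vp\vert_{1,\E}\lesssim\p^3\bigl(\Vert\vp\Vert_{0,\partial\E}+\Vert\Pizpmd\vp\Vert_{0,\E}\bigr)$ and hence $c_*(\p)\gtrsim\p^{-5}$ after reinserting the weights. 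Your harmonic splitting $\vp=\vp^0+\vp^\partial$ trades the Neumann trace inequality for a $\p$-explicit lifting estimate $\vert\vp^\partial\vert_{1,\E}^2\lesssim \p^2 h_\E^{-1}\Vert\vp\Vert_{0,\partial\E}^2$; both routes can be made to produce the stated exponents, and yours separates the boundary and interior contributions more transparently at the cost of the interpolation-theoretic lifting bound and the replacement step $\Pizpmd\vp^0\mapsto\Pizpmd\vp$.

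The one step you cannot leave as written is the claim that $\Vert\Delta\vp\Vert_{0,\E}$ is bounded by $\p^{c}h_\E^{-1}\Vert\Pizpmd\vp\Vert_{0,\E}$ ``via a polynomial inverse inequality on $\mathbb P_{\p-2}(\E)$''. No inverse inequality relates the $L^2$ norms of the two \emph{different} polynomials $\Delta\vp$ and $\Pizpmd\vp$. The actual mechanism is the negative-norm inverse estimate \eqref{inverse estimate negative norm polynomials} together with the identity $\Vert\Delta\vp^0\Vert_{-1,\E}=\vert\vp^0\vert_{1,\E}$ (integration by parts against $H^1_0(\E)$ test functions, using $\Delta\vp=\Delta\vp^0$), which gives $\Vert\Delta\vp\Vert_{0,\E}\lesssim\p^2\vert\vp^0\vert_{1,\E}$; feeding this back into your inequality $\vert\vp^0\vert_{1,\E}^2\le\Vert\Pizpmd\vp^0\Vert_{0,\E}\Vert\Delta\vp\Vert_{0,\E}$ and absorbing yields $\vert\vp^0\vert_{1,\E}\lesssim\p^2\Vert\Pizpmd\vp^0\Vert_{0,\E}$, after which your replacement argument closes the bound. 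This negative-norm estimate is the crucial ingredient of the whole lemma --- it is precisely what the paper invokes --- and without naming it the $\p$-power bookkeeping you rightly identify as the main obstacle cannot actually be carried out.
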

\begin{proof}
The thesis follows based on employing integration by parts,  the properties of orthogonal projections and $hp$ polynomial inverse estimates. It follows the lines of the proof of \cite[Theorem 4.1]{preprint_hpVEMcorner}; for the sake of brevity the details are not reported here.
\end{proof}
\begin{lem} \label{lemma link between the two stabilizations}
Let $\E \in \tauh$ and let $\SEp$ and $\StildeEp$ be defined as in \eqref{first stabilization} and \eqref{corner sing stab}, respectively. Then, 
\[
\p^{-1} \SEp(\vp,\vp) \le \StildeEp(\vp, \vp) \lesssim \p^2 \SEp (\vp, \vp) \quad \forall \vp \in \VptildeE.
\]
\end{lem}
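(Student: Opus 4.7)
The plan is to split each stabilization into a boundary contribution (coming from the vertex and edge--node dofs) and an interior contribution (coming from the scaled moment dofs), compare the two interior parts and the two boundary parts separately, and then recombine. Writing $\SEp = \SEp^\partial + \SEp^\circ$ and $\StildeEp = \StildeEp^\partial + \StildeEp^\circ$ in the obvious way, the interior parts reduce to a Parseval-type identity: because $\{\m_{\boldalpha}\}$ in \eqref{internal moments} is $L^2(\E)$-orthonormal, one gets $\sum_{\boldalpha}\bigl(\int_\E \vp\,\m_{\boldalpha}\bigr)^2 = \|\Pizpmd \vp\|_{0,\E}^2$, whence $\SEp^\circ(\vp,\vp)=|\E|^{-2}\|\Pizpmd \vp\|_{0,\E}^2$. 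Combined with shape regularity of the (fixed) mesh, i.e.\ $|\E|\approx h_\E^2$, this is the \emph{exact} algebraic relation $\StildeEp^\circ(\vp,\vp)\approx p^2\SEp^\circ(\vp,\vp)$, up to purely geometric constants.

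For the boundary parts, I would invoke the standard one-dimensional polynomial norm equivalence on each edge $\e\subset\partial\E$. Assuming that the $p-1$ internal edge nodes are chosen as the interior Gauss--Lobatto points (the canonical VEM choice), the $p+1$ nodal values of any $\vp|_\e\in\mathbb P_\p(\e)$ satisfy
\[
\tfrac{h_\e}{p^2}\sum_{i=0}^{p}\vp(\xi_i)^2 \;\lesssim\; \|\vp\|_{0,\e}^2 \;\lesssim\; \tfrac{h_\e}{p}\sum_{i=0}^{p}\vp(\xi_i)^2,
\]
by the Gauss--Lobatto quadrature bound together with the weight asymptotics $w_0,w_p\approx h_\e/p^2$ and $w_i\approx h_\e/p$ for $1\le i\le p-1$. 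Multiplying by $p/h_\E$ and summing over the edges of $\E$ then yields $p^{-1}\SEp^\partial(\vp,\vp)\lesssim \StildeEp^\partial(\vp,\vp)\lesssim \SEp^\partial(\vp,\vp)$.

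Combining the two pairs of estimates gives the claim: the upper bound $\StildeEp\lesssim p^2\SEp$ uses the factor $1$ on the boundary and $p^2$ on the interior, while the lower bound $p^{-1}\SEp\lesssim \StildeEp$ uses the factor $p$ on the boundary and the much stronger $p^{-2}$ on the interior, so that the boundary contribution is the binding one. The main obstacle is the boundary equivalence: without a well-chosen (e.g.\ Gauss--Lobatto) distribution of the internal edge nodes, the Lebesgue constant of the associated interpolation would introduce additional, uncontrolled $p$-factors and the stated exponents would be lost. The interior comparison, by contrast, is essentially Parseval's identity together with a geometric scaling, and introduces no further $p$-dependence beyond the explicit $p^2$ already present in the definition of $\StildeEp^\circ$.
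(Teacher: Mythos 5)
Your proof is correct and follows essentially the same route as the paper's: Parseval's identity in the $L^2(\E)$-orthonormal moment basis handles the interior part exactly (giving the clean factor $p^2$), and the Gauss--Lobatto quadrature equivalence together with the weight bounds handles the boundary part. The only cosmetic differences are that you use the sharper upper weight bound $\rho_j \lesssim p^{-1}$ where the paper uses $\rho_j \lesssim 1$ (which is why the paper's boundary comparison carries an extra factor $p$ on the right --- immaterial, since the interior term supplies the dominant $p^2$ in either case), and that you state explicitly the Gauss--Lobatto placement of the internal edge nodes, an assumption the paper uses but leaves implicit.
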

Before showing the proof, we recall that given
$\{\rho_j^{\p+1}\}_{j=0}^\p$ and $\{\xi _j\}_{j=0}^{\p}$ the $\p+1$ Gau\ss-Lobatto nodes and weights on $\Ihat = [-1,1]$, respectively, 
it holds:
\begin{equation} \label{Bernardi Maday stabilization}
\sum_{j=0}^\p \q^2(\xi_j^{\p+1}) \rho_j^{\p+1} \lesssim \Vert \q \Vert_{0,\Ihat}^2 \le \sum_{j=0}^\p \q^2(\xi_j^{\p+1}) \rho_j^{\p+1}\quad \forall \q \in \mathbb P_\p(\Ihat),
\end{equation}
cf. \cite[(2.14)]{bernardimaday1992polynomialinterpolationinsobolev}. Moreover, it holds:
\begin{equation} \label{bounds on Gauss Lobatto weights}
\p^{-2}\lesssim \rho_j ^{\p+1} \lesssim 1 \quad \forall j=0,\dots, \p+1,
\end{equation}
where the hidden constants are positive and independent of $\p$, see \cite[(25.4.32)]{AbramowitzStegun_handbook}.
\begin{proof}
By using \eqref{Bernardi Maday stabilization} and \eqref{bounds on Gauss Lobatto weights}, we obtain:
\begin{equation} \label{equivalence estimate boundary term}
\frac{1}{h_\E}\p^{-1} \sum_{j=1}^{\card(\mathcal E^\E)} \dof_{b,j}^2(\vp) \lesssim \frac{\p}{h_\E} \Vert \vp \Vert_{0,\partial \E}^2 \lesssim \frac{\p}{h_\E} \sum_{j=1}^{\card(\mathcal E ^\E)} \dof_{b,j}^2(\vp),
\end{equation}
where $\dof_{b,j}$ denotes the $j$-th boundary degree of freedom. This concludes the discussion concerning the boundary term.
Next, we study the bulk term in \eqref{corner sing stab}, and consider the expansion of $\Pizpmd \vp$ into the $L^2(\E)$ orthonormal basis $\{\m_{\boldalpha}\}_{\vert \boldalpha \vert = 0}^{\p-2}$ introduced in \eqref{internal moments}:
\begin{equation} \label{expansion PIZPM1}
\Pizpmd \vp = \sum_{\vert \boldalpha \vert \le \p-2} c_{\boldalpha} \m_{\boldalpha}.
\end{equation}
Testing \eqref{expansion PIZPM1} with $\m_{\boldbeta}$, $\vert \boldbeta \vert \le \p-2$, we obtain:
\[
\vert \E \vert \dof_{\boldbeta} (\vp) = \int_\E \vp \m_{\boldbeta} = \int_\E \Pizpmd \vp \m_{\boldbeta} = c_{\boldbeta},
\]
where $\dof_{\boldbeta}(\cdot)$ denotes the internal degrees of freedom associated with polynomial $\m_{\boldbeta}$.
 As a consequence:
\begin{equation} \label{L2 projectors are equal}
\Pizpmd \vp =  \sum_{\vert \boldalpha \vert \le \p-2} \vert \E \vert \dof_{\boldalpha}(\vp) \m_{\boldalpha}.
\end{equation}
Parceval identity implies:
\begin{equation} \label{equivalence estimate internal term}
\frac{\p^2}{h_\E^2} (\Pizpmd \vp, \Pizpmd \vp) _{0,\E} = \frac{\p^2}{h_\E^2} \sum_{\vert \boldalpha \vert \le \p-2} \vert \E \vert ^2 \dof_{i,\boldalpha}^2(\vp),
\end{equation}
where $\dof_{i,\vert \boldalpha \vert}(\cdot)$ denotes the internal degree of freedom associated with polynomial $\m_{\boldalpha}$.
The thesis follows from \eqref{equivalence estimate boundary term} and \eqref{equivalence estimate internal term}.
\end{proof}
\begin{remark}
In order to guarantee the proper scaling in terms of $h$ in \eqref{first stabilization},
we should multiply the internal dofs with $\vert \E \vert$, see \eqref{equivalence estimate internal term}, and the boundary dofs by $h_\E^{-1}$, see \eqref{equivalence estimate boundary term}.
Since we consider only the $\p$--version of the virtual element method, then we can drop these scaling factors.
\end{remark}

Finally, from \cite{VEMvolley, hpVEMbasic} the following error bound in the energy norm holds:
\begin{equation} \label{three terms error decomposition}
\vert \u - \up \vert_{1,\Omega} \lesssim \frac{\alpha^*(\p)}{\alpha_*(\p)} \left\{ \mathcal F _\p  + \inf_{\uI \in \Vp} \vert \u - \uI \vert_{1,\Omega} + \sum_{\E \in \tauh} \inf_{\upi \in \mathbb P_{\p}(\E)} \vert \u -\upi \vert_{1,\E}  \right\},
\end{equation}
where $\u$ and $\up$ are the solution of \eqref{continuous weak formulation} and \eqref{discrete weak formulation}, respectively, 
$\alpha_*(\p)$ and $\alpha^*(\p)$ are the stability constants given in \eqref{stability formula} and $\mathcal F_\p$ is the smallest constant satisfying:
\[
(\f,\vp)_{0,\Omega} - \langle \fp , \vp \rangle_p \le \mathcal F_\p \vert \vp \vert_{1,\Omega} \quad \forall \vp \in \Vp.
\]
From \eqref{three terms error decomposition} and following \cite{hpVEMbasic}, it is possible to prove $\p$ error bounds analogous to those in the $\p$-FEM case, see \cite{SchwabpandhpFEM}.\\

Finally, we focus on the construction of the linear system of equations stemming from \eqref{discrete weak formulation}.
By expanding the trial function $\up$ as a combination of the elements in the canonical basis, see \eqref{canonical basis}, 
\[
\up = \sum_{i=1}^{\dim(\Vptilde)} \dof_i(\up) \varphi_i = \sum_{i=1}^{\dim(\Vptilde)} (\upvect)_i \varphi_i,
\]
where $\upvect \in \mathbb R^{\dim(\Vptilde)}$ is the set of dofs of $\up$, and selecting $\vp$ as $\varphi_j$, $j=1,\dots, \dim(\Vptilde)$, we obtain:
\begin{equation} \label{system arising from VEM}
\Kpbold \cdot \upvect = \fpvect,
\end{equation}
where:
\begin{equation} \label{definition stiffness matrix and discrete rhs}
\begin{aligned}
&(\Kpbold)_{i,j} = \ap(\varphi_j, \varphi_i),
& (\fpvect)_i = \langle \fp, \varphi_i \rangle _\p,
&& i,\,j=1,\dots, \dim(\Vptilde),
\end{aligned}
\end{equation}
Both the right-hand side and the coefficient matrix are computable exactly up to machine precision, see \cite{hitchhikersguideVEM}.
In the next section, we discuss the spectral condition number of the stiffness matrix $\Kpbold$.
\subsubsection{The condition  number of the stiffness matrix $\Kpbold$} \label{subsection the condition number of the stiffness matrix Kp}
In \eqref{internal moments} we defined the internal degrees of freedom associated with space $\Vp(\E)$ defined in \eqref{classical choice virtual element}
as the scaled moments with respect to an $L^2(\E)$ orthonormal basis of $\mathbb P_{\p-2}(\E)$.
We observe that this choice is different from the usual choice adopted in standard VEM literature, see e.g. \cite{VEMvolley, hitchhikersguideVEM, AhmadAlsaediBrezziMariniRusso_2013},
where the internal dofs are defined as the (scaled) moments with respect to the monomial basis \eqref{monomial basis} of $\mathbb P_{\p-2}(\E)$.
Our choice, which is a key ingredient in the proof of Lemma \ref{lemma link between the two stabilizations}. also plays a fundamental role in the spectral properties of the stiffness matrix $\Kpbold$ defined in \eqref{definition stiffness matrix and discrete rhs}.
Indeed, in Table \ref{figure comparison condition number} we compare the spectral condition number $\kappa(\Kpbold)$
 of the stiffness matrix $\Kpbold$ as a function of the degree of accuracy of the method $\p$, based on employing the two different sets of internal degrees of freedom,
namely the scaled moments with respect to the $L^2(\E)$ orthonormal basis of $\mathbb P_{\p-2}(\E)$ or with respect to the monomial basis \eqref{monomial basis}.
Results reported in Table \ref{figure comparison condition number} have been obtained on the Voronoi-Lloyd polygonal mesh shown in Figure \ref{meshes employed};
the same kind of results have been obtained on meshes made of squares and of quasi-regular hexagons. For the sake of brevity these results have been omitted.
From  the results reported in Table \ref{figure comparison condition number} it is clear that $\Kpbold$ grows, as for classical finite element methods,
as $\p^4$ whenever the interior dofs are defined with respect to an $L^2(\E)$ orthonormal basis of $\mathbb P_{\p-2}(\E)$ whereas the condition number $\Kpbold$ blows up exponentially 
if the scaled moments are defined with respect to the monomial basis \eqref{monomial basis}.
That is, the choice \eqref{internal moments} for the internal degrees of freedom is the right choice as it damps the condition number of the stiffness matrix effectively and prevents round off errors,
as those observed, for example,  in \cite{hpVEMbasic} where the monomial basis \eqref{monomial basis} was employed.
\begin{table}[!htbp]
\centering
\begin{tabular}{r cl }
$\p$
&$\kappa(\Kpbold)$    
&$\kappa(\Kpbold)$ \\
\hline                     
& &  \\[-0.2cm]
$1$ & 1.3225e+01 & 7.2732e+01        \\ 
$2$ & 4.9712e+02 &1.0964e+03        \\ 
$3$ & 7.5099e+02 & 1.2910e+05    \\
$4$ & 1.1823e+03  & 1.5566e+07      \\ 
$5$ & 1.9395e+03  & 1.6003e+09    \\ 
$6$  & 3.5100e+03   & 1.7069e+11      \\ 
$7$ & 6.0754e+03 & 1.7280e+13   \\ 
$8$ & 1.0100e+04 & 1.7172e+15      \\ 
\hline
Rate & $p^4$ & $a\exp(bp)$, $a=0.18$, $b=4.59$
\end{tabular}
\caption{Condition number $\kappa(\Kpbold)$ of the stiffness matrix $\Kpbold$ as a function of  $\p$ for two different sets of internal degrees of freedom: 
\emph{(left)} scaled moments with respect to an $L^2(\E)$ orthonormal basis of $\mathbb P_{\p-2}(\E)$ (orthogonalized basis);
\emph{(right)}  scaled moments with respect to the monomial basis \eqref{monomial basis} of $\mathbb P_{\p-2}(\E)$ (monomial basis).
Voronoi-Lloyd polygonal mesh.} 
\label{figure comparison condition number}
\end{table}
\subsection{An auxiliary Virtual Element Space} \label{subsection an auxiliary virtual element space}
In this section, we introduce an auxiliary VE space which will be crucial for the construction of the multigrid algorithm in Section~\ref{section multigrid methods with non inherited sublevel solvers}.
Hence, following the spirit of \cite{AhmadAlsaediBrezziMariniRusso_2013}, we consider a modification of $\VptildeE$ into a \emph{diverse} space on which we are able to compute a higher order $L^2$ projector.
In particular, we set:
\begin{equation}\label{our choice virtual space}
\VpE = \left\{ \vp \in H^1(\E) \left \vert\vp|_{\partial \E} \in \mathbb B _\p(\partial \E),\, \Delta \vp \in \mathbb P_{\p-1}(\E),\, \int_\E(\Pizpmd \vp -\vp) \m_{\boldalpha}=0,\, \vert \boldalpha \vert=\p-1 \right.   \right\},
\end{equation}
where we recall that $\boldalpha \in \mathbb N^2$ is a multi-index.

Henceforth, we will denote by the expression \emph{enhancing constraints} the following set of constraints employed in the definition of $\VpE$:
\begin{equation} \label{enhancing constraints}
\int_\E (\Pizpmd \vp - \vp) \, \m_{\boldalpha} = 0,\quad \quad \vert \boldalpha \vert =\p-1,\quad \forall \vp \in \VpE.
\end{equation}

The definition of $\VpE$ is different from the one presented in \cite{AhmadAlsaediBrezziMariniRusso_2013}. 
Moreover, we observe that $\mathbb P_{\p}(\E) \nsubseteq \VpE$, but $\mathbb P_{\p-2}(\E) \subseteq \VpE$.
To be more precise, owing to the $L^2(\E)$ orthonormality of the $\m_{\boldalpha}$, it holds in fact:
\[
\mathbb P_{\p-2}(\E) \oplus \left( \mathbb P_\p(\E) / \mathbb P_ {\p-2}(\E)  \right)^{\perp_{\mathbb P_{\p-1}(\E)}} \subseteq \VpE,
\]
where $\left( \mathbb P_\p(\E) / \mathbb P_ {\p-2}(\E)  \right)^{\perp_{\mathbb P_{\p-1}(\E)}}$
denotes the space of polynomials of degree $\p$, not in the space of polynomials of degree $\p-2$, orthogonal to all $\m_{\boldalpha}$ with $\vert \boldalpha \vert =\p-1$.

We endow $\Vp$ with the same degrees of freedom of the space $\Vptilde$ introduced in \eqref{classical choice virtual element}.
Using the \emph{auxiliary} local virtual space $\Vp$ introduced in \eqref{our choice virtual space}, it is clear that we are able to compute the following operator:
\begin{itemize}
\item $\Pizpmu: \VptildeE \rightarrow \mathbb P_{\p-1}(\E)$, the $L^2$ projection onto the space of polynomials of degree $\p-1$, defined as in \eqref{classical L2 orthogonality}.
\end{itemize}
We stress that there is no chance to be able to compute explicitly $\Pizpmu$ as a map defined on $\VptildeE$, since the internal degrees of freedom are up to order $\p-2$,
whereas this is possible in the new space $\VpE$ we can do that since \eqref{enhancing constraints} allows to compute internal moments up to order $\p-1$.

The global \emph{auxiliary} VE space is obtained again by gluing continuously the local spaces as done in \eqref{global classical virtual space}:
\begin{equation} \label{global auxiliary virtual space}
\Vp=\left\{ \vp \in H^1_0(\Omega) \cap \mathcal C^0(\overline \Omega) \mid \vp|_\E \in \VpE,\, \forall \E \in \tauh  \right\}.
\end{equation}

The choice of the discrete bilinear form $\ap$ and of the right-hand side $\fp$ in \eqref{discrete weak formulation} are exactly the same as those in Section~\ref{subsection the p version of the virtual element method} for the space $\Vptilde$.

It is crucial to remark that the linear systems stemming from the use of $\Vptilde$ and $\Vp$ are the same.
In fact, it is clear from \eqref{local discrete bilinear form} that the construction of the local discrete bilinear forms depends uniquely on the choice of the set of the degrees of freedom (which we recall are the same for the two spaces)
and the energy projector $\Pinablap$ defined in \eqref{classical H1 orthogonality}, which is computed without the need of \eqref{enhancing constraints}.

Also the construction of the discrete right-hand side \eqref{discrete loading term} does not depend on the choice of the space since the $L^2$ projector $\Pi_{\max(1,\p-2)}$ defined in \eqref{classical L2 orthogonality}
is built using the internal degrees of freedom only, while the enhancing constraints \eqref{enhancing constraints} are neglected.

\begin{remark} \label{remark differences between virtual spaces}
The aforementioned equivalence between the two linear systems associated with spaces $\Vptilde$ and $\Vp$ is of great importance in order to design and analyse the multigrid algorithm in Section~\ref{section multigrid methods with non inherited sublevel solvers}.
However, $\Vptilde$ and $\Vp$ have significant differences.

The first issue we want to highlight is that the method associated with space $\Vp$ \eqref{global auxiliary virtual space} is not a ``good'' method from the point of view of the approximation property.
It is possible to show $\p$ approximation results on the first and the third term on the right hand side of \eqref{three terms error decomposition} following for instance \cite[Sections 4 and 5]{hpVEMbasic}.
The problematic term is the second one, i.e. the best error term with respect to functions in the virtual space. The approach used in \cite{hpVEMbasic}, which is the $\p$--version of \cite[Proposition 4.2]{MoraRiveraRodriguez_2015},
does not hold anymore in the enhanced version of VEM.
At the best of the authors knowledge, the $\p$ approximation of the ``best virtual'' error term in enhanced space is still an open problem.
On the other hand, the error analysis with space $\Vptilde$ is available in \cite{hpVEMbasic}.

The second issue we underline,
is that the space $\Vp$ \eqref{global auxiliary virtual space} is more suited for the construction of the multigrid algorithm than space $\Vptilde$ \eqref{global classical virtual space},
as will be clear from Section~\ref{section multigrid methods with non inherited sublevel solvers}.
\end{remark}

Let us summarize the strategy we will follow.
We consider a discretization of \eqref{continuous weak formulation} by means of the Virtual Element Method \eqref{discrete weak formulation} employing as an approximation space $\Vptilde$ defined in \eqref{global classical virtual space}.
The associated linear system \eqref{system arising from VEM} coincides algebraically with the one arising by employing the VE space $\Vp$ defined in \eqref{global auxiliary virtual space}.
For this reason, we can solve system \eqref{system arising from VEM} by means of a multigrid algorithm based on the sequences of spaces $\Vp$ defined in \eqref{global auxiliary virtual space}.

Having the vector of degrees of freedom $\upvect$, one can reconstruct functions in two different spaces: either in space $\Vptilde$ defined in \eqref{global classical virtual space}, or in space $\Vp$ defined in \eqref{global auxiliary virtual space}.
The discrete solution in the former space is the one to be taken into account, since it has the proper $\p$ approximation properties.

\section{Multigrid algorithm with non-inherited sublevel solvers} \label{section multigrid methods with non inherited sublevel solvers}
In this section, we present a $\p$-VEM multigrid algorithm and the key ingredients for its formulation.

In the construction of our multigrid algorithm, we will make use of two key ingredients. 
The first one are suitable (computable) interspace operators, i.e.  prolongation/restriction operators between two VE spaces.
These operators will be constructed by employing the properties of the following space-dependent inner product:
\begin{equation} \label{space dependent inner product}
(\wp, \vp)_\p = \sum_{i=1}^{\dim(\Vp)} \dof_i(\wp) \dof_i(\vp) \quad \forall \wp,\vp \in \Vp.
\end{equation}
The second ingredient is a suitable smoothing scheme $\Bp$, which aims at reducing the high frequency components of the error.\\

We aim at introducing a multigrid iterative method for the solution of the linear system in \eqref{system arising from VEM}, which we recall is given by:
\begin{equation} \label{multigrid system}
 \Kpbold  \cdot  \upvect = \fpvect,
\end{equation}
where the coefficient matrix $\Kpbold$ and the right-hand side $\fpvect$ are the matrix representations with respect to the their expansion in the canonical basis of space $\Vp$, defined in \eqref{global auxiliary virtual space}, of the operators:
\begin{equation} \label{construction multigrid matrix and rhs}
(\Ap \wp, \vp)_\p = \ap(\wp,\vp),\quad \quad (\emph{\emph f} _\p,\vp)_\p = \langle \fp , \vp \rangle _\p,\quad \forall \wp,\,\vp \in \Vp,
\end{equation}
cf. \eqref{local discrete bilinear form} and \eqref{discrete loading term}, respectively.

In order to introduce our $\p$-multigrid method, we consider a sequence of VE spaces given by
$\widetilde{V}_p, \widetilde{V}_{p-1}, \ldots, \widetilde{V}_1$, where the $\ell$-th level is given by $\widetilde{V}_{p-\ell}$, $\ell=0,\ldots, p-1$.
Let now consider the linear system of equations on level $\ell$: $\Kbold{\ell}  \cdot  \zbold_{\ell} = \gbold_{\ell}$.
We denote by $\texttt{MG}(\ell,\gbold_{\ell}, \zbold_{\ell}^{(0)}, \md)$ one iteration obtained by applying the $\ell$-th level iteration of our MG scheme 
to the above linear system, with initial guess $\zbold_{\ell}^{(0)}$ and using $\md$ post-smoothing steps, respectively. 
For $\ell = 1$, (coarsest level) the solution is computed with a direct method, that is
$\texttt{MG}(1,\gbold_{1},\zbold_{1}^{(0)}, \md)= \Kbold{1}^{-1}\gbold_{1},$
while for $\ell>1$ we adopt the recursive procedure described in Algorithm~\ref{table algorithm}. 
\begin{algorithm}[H]
\caption{$\ell$-th level of the $\p$-multigrid algorithm	}
\label{table algorithm}
\begin{algorithmic}
\State \underline{{\it  Coarse grid correction}}:
\State $\rbold_{\ell-1} = \Ibold_{\ell-1}^\ell \cdot (\gbold_\ell - \Kbold{\ell} \cdot \zbold_\ell ^{(0)})$; (restriction of the residual)
\State $\ebarbold_{\ell-1} = \texttt{MG}(\ell-1, \rbold_{\ell-1}, \mathbf 0_{\ell - 1}, \md)$; (approximation of the residual equation \dots)
\State $\ebold     _{\ell-1} = \texttt{MG}(\ell-1, \rbold_{\ell-1}, \ebarbold_{\ell - 1}, \md)$; (\dots $\Kbold{\p-1} \cdot \zbold_{\p-1} = \rbold_{\p-1}$)
\State $\zbold_\ell^{(1)} = \zbold_\ell^{(0)} + \Ibold_{\ell-1}^\ell \cdot \ebold_{\ell-1}$; (error correction step)
\State \underline{{\it  Post-smoothing}}:
\For{$i=2:\md+1$}
\State $\zbold_{\ell}^{(i)}= \zbold_{\ell}^{(i-1)} + \Bbold_\ell ^{-1} \cdot (\gbold_\ell - \Kbold{\ell} \cdot \zbold_\ell ^{(i-1)}) $;
\EndFor
\vspace{0.3cm}
\State $\texttt{MG}(\ell,\gbold_{\ell}, \zbold_{\ell}^{(0)}, \md)=\mathbf{z}_{\ell}^{(\md+1)}.$
\end{algorithmic}
\end{algorithm}
In presenting Algorithm \ref{table algorithm}, we used some objects that are not defined so far. In particular, $\Ibold _{\ell-1}^\ell$ denotes the matrix representation of the interspace operators defined in Section \ref{subsection interspace operator},
while $\Bbold_\p$ denotes the matrix representation of the smoothing operator defined in Section \ref{subsection smoothing scheme and spectral bounds}.

For a given, user defined tolerance \texttt{tol} and a given initial guess $\upvect^{(0)}$, 
the full $\p$-multigrid algorithm employed to solve \eqref{multigrid system} is summarized in Algorithm \ref{table full algorithm}; its analysis  
is presented in the forthcoming Section~\ref{section convergence analysis of the multigrid method}.
\begin{algorithm}[H]
\caption{$\p$-multigrid algorithm: $\uptildevect=\texttt{MG}(p,(\textbf{\emph{\emph f}} _\p, \uptildevect^{(0)}, \md).$}  
\label{table full algorithm}
\begin{algorithmic}
\State $\mathbf{r}_{\p}^{(0)}=\textbf{\emph{\emph f}} _\p - \mathbf{A}_{\p} \cdot \uptildevect^{(0)}$;
\While{ $\|\mathbf{r}_{\p}^{(i)}\| \leq \texttt{tol} \|\fbold{p} \|$} 
\State $ \uptildevect^{(i+1)}=\texttt{MG}(\p, \textbf{\emph{\emph f}} _\p,  \uptildevect^{(i)}, \md)$;
\State $\mathbf{r}_{\p}^{(i+1)}=\textbf{\emph{\emph f}} _\p - \mathbf{A}_{\p} \cdot \uptildevect^{(i+1)}$;
\State $i \longrightarrow i+1$;
 \EndWhile
\end{algorithmic}
\end{algorithm}
\begin{remark} \label{remark on level with huge differences}
As a byproduct, we underline that it is possible to employ multigrid algorithms where two ``adjacent'' levels, associated to spaces $\V_{\p_1}$ and $\V_{\p_2}$, respectively, satisfy $\vert \p_1-\p_2 \vert \ge 2$. In such cases, to build the interspace operators, it suffices to modify the definition \eqref{our choice virtual space} by using a ``larger'' enhancing technique and imposing that the laplacian of functions in the virtual space is a polynomial of higher degree, and then reduce the space with additional constraints on the $L^2$-projectors.
\end{remark}
\subsection{Space-dependent inner products} \label{subsection space dependent inner product}
The aim of this section is to prove the following result on the space-dependent inner product \eqref{space dependent inner product}, which will be useful for the forthcoming analysis.
\begin{thm} \label{theorem stability bounds space dependent inner product}
Let $(\cdot, \cdot)_\p$ be defined as in \eqref{space dependent inner product}. Then, the following holds true:
\begin{equation} \label{bounds space dependent inner product}
\beta_*(\p) \vert \vp \vert^2_{1,\Omega} \lesssim (\vp,\vp)_{\p} \lesssim \beta^*(\p) \vert \vp \vert_{1,\Omega}^2 \quad \forall \vp \in \Vp,
\end{equation}
where $\beta_*(\p) \gtrsim \p^{-8}$ and $\beta^*(\p) \lesssim 1$.
\end{thm}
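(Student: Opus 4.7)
The plan is to reduce \eqref{bounds space dependent inner product} to element-local estimates for the dof-dof stabilization $\SEp$ defined in \eqref{first stabilization}, and then invoke the spectral characterization of $\SEp$ established in Theorem \ref{theorem stability dof dof product}, combined with the $H^1$-stability of the energy projector $\Pinablap$.

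First I would localize the global inner product. Unfolding the sum over global dofs into sums over local dofs (with a harmless multiplicative factor coming from the multiple counting of vertex and edge dofs shared by adjacent elements, bounded by the maximum valence of the fixed mesh $\tauh$ and so independent of $\p$), one obtains
\[
(\vp, \vp)_\p \approx \sum_{\E \in \tauh} \SEp(\vp|_\E, \vp|_\E),
\]
and it suffices to bound $\SEp(\vp|_\E, \vp|_\E)$ in terms of $\vert \vp \vert^2_{1,\E}$ with the prescribed $\p$-dependence, then sum over $\E$ (which retains the seminorm $\vert \cdot \vert_{1,\Omega}$ on the right).

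Next, for each $\E \in \tauh$ I would split $\vp|_\E = \q + \vp^\perp$, where $\q := \Pinablap \vp \in \mathbb P_\p(\E)$ and $\vp^\perp := (\Id_\p - \Pinablap) \vp \in \ker(\Pinablap)$. The kernel component is immediately controlled by Theorem \ref{theorem stability dof dof product}, which gives
\[
\p^{-6} \vert \vp^\perp \vert^2_{1,\E} \lesssim \SEp(\vp^\perp, \vp^\perp) \lesssim \p^4 \vert \vp^\perp \vert^2_{1,\E},
\]
and $H^1$-orthogonality yields $\vert \vp^\perp \vert_{1,\E} \le \vert \vp \vert_{1,\E}$. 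For the polynomial component $\q$, I would estimate $\SEp(\q,\q)$ directly by separating the contributions of boundary and interior dofs: on the boundary, use the Gau\ss-Lobatto identity \eqref{Bernardi Maday stabilization} and the weight bounds \eqref{bounds on Gauss Lobatto weights} together with a polynomial trace inequality to relate $\sum \dof_{b,j}^2(\q)$ to $\vert \q \vert^2_{1,\E}$; on the interior, apply the identity \eqref{L2 projectors are equal} and Parseval in the $L^2(\E)$-orthonormal basis $\{\m_{\boldalpha}\}$ to relate $\sum \dof_{i,\boldalpha}^2(\q)$ to $\|\Pizpmd \q\|_{0,\E}^2 / \vert \E \vert^2 \le \|\q\|_{0,\E}^2 / \vert \E \vert^2$. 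The fact that globally $\vp \in H^1_0(\Omega)$ allows us to kill the constant-function obstruction via a global Poincar\'e inequality, so that $\|\q\|_{0,\E}$ can ultimately be controlled by $\vert \vp \vert_{1,\Omega}$.

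Combining the two contributions (using the triangle/Young inequality $\SEp(\vp,\vp) \lesssim \SEp(\q,\q) + \SEp(\vp^\perp, \vp^\perp)$, and analogously for the reverse bound via $\vert \vp \vert_{1,\E}^2 \lesssim \vert \q \vert^2_{1,\E} + \vert \vp^\perp \vert^2_{1,\E}$), and then summing over $\E$, yields \eqref{bounds space dependent inner product}. The exponent $\p^{-8}$ in the lower bound arises from composing the $\p^{-6}$ factor of Theorem \ref{theorem stability dof dof product} on $\vp^\perp$ with an additional $\p^{-2}$ loss incurred when controlling $\vert \q \vert_{1,\E}$ from the dof sum on the polynomial piece (via a polynomial inverse inequality between $\|\q\|_{0,\E}$ and $\vert \q \vert_{1,\E}$). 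The main obstacle will be extracting the clean upper bound $\beta^*(\p)\lesssim 1$: a naive use of Theorem \ref{theorem stability dof dof product} on $\vp^\perp$ introduces a spurious $\p^4$ factor, so one must arrange the splitting so that the polynomial part absorbs the boundary/interior dof contributions without additional powers of $\p$. I would handle this by routing the estimates through the auxiliary stabilization $\StildeEp$ of Lemma \ref{lemma stabilization corner singularities}, whose upper bound in $\p$ is milder, and exploiting the stability of $\Pinablap$ together with global $H^1_0$-structure to eliminate the remaining factors.
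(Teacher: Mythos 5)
Your overall strategy --- localize $(\vp,\vp)_\p$ to the elementwise dof sums $\SEp(\vp|_\E,\vp|_\E)$, split $\vp|_\E$ into $\Pinablap\vp$ plus a kernel component, and invoke Theorem \ref{theorem stability dof dof product} --- is genuinely different from the paper's, and it contains a gap that you yourself flag but do not resolve: the upper bound $\beta^*(\p)\lesssim 1$. Once you write $\dof_j(\vp)^2\lesssim\dof_j(\Pinablap\vp)^2+\dof_j(\vp^\perp)^2$ you are forced to bound $\SEp(\vp^\perp,\vp^\perp)$ from above, and every estimate available in the paper for this quantity carries a positive power of $\p$: Theorem \ref{theorem stability dof dof product} gives $\p^4$, and your proposed detour through $\StildeEp$ does not help, since Lemma \ref{lemma link between the two stabilizations} gives $\SEp\le\p\,\StildeEp$ and Lemma \ref{lemma stabilization corner singularities} gives $\StildeEp(\vp^\perp,\vp^\perp)\lesssim\p^2\vert\vp^\perp\vert^2_{1,\E}$, i.e.\ still $\p^3$ in total. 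The impression that $\StildeEp$ is ``milder'' is misleading here: its definition \eqref{corner sing stab} carries explicit weights $\p/h_\E$ and $\p^2/h_\E^2$, which is precisely where its $\p^2$ upper bound originates. The $H^1$-stability of $\Pinablap$ cannot remove these factors either, because the difficulty is bounding the \emph{dofs} of the genuinely non-polynomial function $\vp^\perp$, not its seminorm. The kernel decomposition is therefore the wrong move for the upper bound.

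The paper avoids the decomposition entirely. It introduces the auxiliary inner product $(\vp,\vp)_{\p,\aux;\E}=h_\E^{-1}\Vert\vp\Vert^2_{0,\partial\E}+h_\E^{-2}\Vert\Pizpmu\vp\Vert^2_{0,\E}$ --- note that the projector is $\Pizpmu$ onto $\mathbb P_{\p-1}(\E)$, computable only thanks to the enhancing constraints \eqref{enhancing constraints}, and that the $\p$-weights of $\StildeEp$ are deliberately dropped. Lemma \ref{lemma link between the two space dependent inner products} relates $(\cdot,\cdot)_\p$ to $(\cdot,\cdot)_{\p,\aux}$ with no loss in the upper direction (Gau\ss--Lobatto plus Parseval, exactly as in your interior-dof computation), and the bound $(\vp,\vp)_{\p,\aux}\lesssim\Vert\vp\Vert^2_{1,\E}$ then follows from the trace inequality and the $L^2$-stability of $\Pizpmu$ applied to the \emph{whole} $\vp$, with a global Poincar\'e inequality to pass to the seminorm. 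For the lower bound the paper again works on all of $\VpE$ at once: integration by parts gives $\vert\vp\vert^2_{1,\E}=-\int_\E\Delta\vp\,\Pizpmu\vp+\int_{\partial\E}\frac{\partial\vp}{\partial\n}\vp$ (using $\Delta\vp\in\mathbb P_{\p-1}(\E)$), and the negative-norm inverse estimate \eqref{inverse estimate negative norm polynomials} together with a Neumann trace inequality yields $\vert\vp\vert^2_{1,\E}\lesssim\p^6(\vp,\vp)_{\p,\aux;\E}$; the remaining $\p^{-2}$ in $\beta_*(\p)\gtrsim\p^{-8}$ comes from Lemma \ref{lemma link between the two space dependent inner products}, not from an inverse inequality applied to $\Pinablap\vp$ as you conjecture. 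Your lower-bound accounting happens to land on the same exponent, but the chain you sketch (control of $\Vert\Pinablap\vp\Vert_{0,\E}$ by the dof sum, followed by a polynomial inverse inequality on a polygon) is not carried out and would require its own $\p$-explicit justification. In short: the localization and the Gau\ss--Lobatto/Parseval ingredients are right, but routing the estimate through $\ker(\Pinablap)$ blocks the upper bound, and the argument needs to be restructured along the lines above.
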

In order to prove Theorem \ref{theorem stability bounds space dependent inner product}, it suffices to combine the forthcoming technical results.
The first one makes use of the following \emph{auxiliary} space-dependent inner product defined as:
\begin{equation} \label{auxiliary space dependent inner product}
(\up,\vp)_{\p,\aux} = \sum_{\E \in \tauh} (\up,\vp)_{\p,\aux;\E} \quad \forall \up,\vp \in \Vp,
\end{equation}
where the local contributions read:
\begin{equation} \label{local auxiliary space dependent inner product}
(\up,\vp)_{\p,\aux;\E} = h_\E^{-1} (\up,\vp)_{0,\partial \E} + h_{\E}^{-2} (\Pizpmu \up, \Pizpmu \vp)_{0,\E} \; \up,\vp \in \VpE,\; \forall \E \in \tauh.
\end{equation}
\begin{lem} \label{lem stability bounds space dependent inner product}
Let $(\cdot, \cdot)_{\p,\aux}$ be defined in \eqref{auxiliary space dependent inner product}. Then, it holds:
\begin{equation} \label{stability bounds space dependent inner product}
\widetilde\beta_*(\p) \vert \vp \vert_{1,\Omega}^2 \lesssim (\vp,\vp)_{\p,\aux} \lesssim \widetilde\beta^*(\p) \vert \vp \vert_{1,\Omega}^2 \quad \forall \vp \in \Vp,
\end{equation}
where $\widetilde \beta_*(\p) \gtrsim \p^{-6}$ and $\widetilde \beta^*(\p) \lesssim 1$.
\end{lem}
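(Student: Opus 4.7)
The plan is to prove both inequalities locally on each $\E \in \tauh$ and then sum over the (fixed) mesh.

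\textbf{Upper bound.} For each $\E$ I would combine a standard continuous trace inequality $\|\vp\|_{0,\partial\E}^2 \lesssim h_\E^{-1}\|\vp\|_{0,\E}^2 + h_\E|\vp|_{1,\E}^2$ with the non-expansivity of the $L^2$-projector $\|\Pizpmu\vp\|_{0,\E} \le \|\vp\|_{0,\E}$, so that both terms of $(\vp,\vp)_{\p,\aux;\E}$ are controlled by a multiple of $h_\E^{-2}\|\vp\|_{0,\E}^2 + |\vp|_{1,\E}^2$. Summing over $\E$ and applying the global Poincar\'e inequality on $H^1_0(\Omega)$ removes the $L^2$-term. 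Because $\tauh$ is fixed, the factors $h_\E$ are absorbed into the hidden constants and no $\p$-dependence appears, giving $\widetilde\beta^*(\p) \lesssim 1$.

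\textbf{Lower bound.} I would redo, directly on the full function $\vp \in \VpE$, the integration-by-parts argument used in the proof of Lemma~\ref{lemma stabilization corner singularities}. Since $\Delta\vp \in \mathbb P_{\p-1}(\E)$ by \eqref{our choice virtual space}, Green's identity and the $L^2$-orthogonality of $\Pizpmu$ yield
\[
|\vp|_{1,\E}^2 = \int_{\partial\E}\vp\,\partial_\mathbf n\vp \;-\; \int_\E (\Pizpmu\vp)\,\Delta\vp.
\]
Cauchy--Schwarz then reduces the task to controlling $\|\partial_\mathbf n\vp\|_{0,\partial\E}$ and $\|\Delta\vp\|_{0,\E}$ by $|\vp|_{1,\E}$ up to powers of $\p$, which is exactly the content of the $hp$-polynomial inverse estimates used in \cite{preprint_hpVEMcorner}. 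A careful bookkeeping of the $\p$-exponents gives $|\vp|_{1,\E}^2 \lesssim \p^6 (\vp,\vp)_{\p,\aux;\E}$; the saving of one factor of $\p$ compared with a black-box combination of Lemma~\ref{lemma stabilization corner singularities} with the pointwise estimate $\StildeEp(w,w) \le \p^2 (w,w)_{\p,\aux;\E}$ (which would only yield $\p^7$) comes from the enhancement \eqref{enhancing constraints}: it is precisely the availability of $\Pizpmu$ in place of $\Pizpmd$ on $\VpE$ that lets $\Delta\vp$ be tested against the richer $\Pizpmu\vp$. Summing over $\E \in \tauh$ gives $\widetilde\beta_*(\p) \gtrsim \p^{-6}$.

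\textbf{Main obstacle.} The crucial step is the inverse-type bound on $\|\partial_\mathbf n\vp\|_{0,\partial\E}$: since $\vp$ is virtual, its normal derivative on $\partial\E$ is not a polynomial and cannot be attacked directly by a one-dimensional polynomial inverse inequality on each edge. The cleanest route I foresee is to split $\vp$ into its harmonic extension of $\vp|_{\partial\E} \in \mathbb B_\p(\partial\E)$ and a bubble part in $H^1_0(\E)$; the former is controlled via classical $hp$-polynomial estimates on the edges and a stable discrete lifting, while the latter is controlled in $H^1_0(\E)$ by Poincar\'e in terms of $\|\Delta\vp\|_{0,\E}$, which is now a genuine polynomial of degree $\p-1$ and can be handled by Markov-type inverse inequalities. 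Once these two ingredients are in place, summation over the elements of the fixed mesh $\tauh$ completes the proof.
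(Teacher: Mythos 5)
Your upper bound and the skeleton of your lower bound --- Green's identity, replacing $\vp$ by $\Pizpmu\vp$ against the polynomial Laplacian thanks to the enhancing constraints, and an inverse estimate to control $\Vert\Delta\vp\Vert_{0,\E}$ by $\p^2\vert\vp\vert_{1,\E}$ --- coincide with the paper's argument. The divergence, and the genuine gap, is in the boundary term. You estimate $\int_{\partial\E}\vp\,\frac{\partial\vp}{\partial\n}$ by Cauchy--Schwarz in $L^2(\partial\E)$, which forces you to bound $\Vert\frac{\partial\vp}{\partial\n}\Vert_{0,\partial\E}$ for a \emph{virtual} function. That is precisely an inverse-type estimate for functions in the virtual space, which the paper explicitly records as unavailable (cf.\ Remark \ref{remark comparison with DG-FEM}), and your sketch does not supply it: for the harmonic part, the normal derivative of the harmonic extension of a continuous piecewise polynomial is governed by the Dirichlet-to-Neumann map on a polygon, whose boundary $L^2$ bounds involve the corner singularities $r^{\pi/\omega-1}$ and do not follow from one-dimensional $hp$ estimates on the edges; for the bubble part you would need quantitative $H^{3/2+\epsilon}$ elliptic regularity on each $\E$. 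Even granting both ingredients, the bookkeeping that is supposed to deliver the stated $\p^{6}$ is asserted rather than carried out, and it is not evident that this route would not cost additional powers of $\p$.

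The paper sidesteps the obstacle by never measuring $\frac{\partial\vp}{\partial\n}$ in $L^2(\partial\E)$. It pairs the boundary term as
\begin{equation*}
\int_{\partial\E}\frac{\partial\vp}{\partial\n}\,\vp\;\le\;\Bigl\Vert\frac{\partial\vp}{\partial\n}\Bigr\Vert_{-\frac12,\partial\E}\,\Vert\vp\Vert_{\frac12,\partial\E},
\end{equation*}
invokes the Neumann trace inequality $\Vert\frac{\partial\vp}{\partial\n}\Vert_{-1/2,\partial\E}\lesssim\vert\vp\vert_{1,\E}+\Vert\Delta\vp\Vert_{0,\E}\lesssim\p^2\vert\vp\vert_{1,\E}$, which holds for any $H^1$ function with $L^2$ Laplacian and requires no polynomial structure of $\vp$ in the bulk, and then applies the one-dimensional $hp$ inverse inequality (via interpolation) only to the trace $\vp|_{\partial\E}\in\mathbb B_\p(\partial\E)$, which genuinely is a piecewise polynomial, to get $\Vert\vp\Vert_{1/2,\partial\E}\lesssim\p\Vert\vp\Vert_{0,\partial\E}$. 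This gives the factor $\p^3$ for the boundary term and hence $\p^6$ overall. Replacing your $L^2(\partial\E)$ Cauchy--Schwarz with this $H^{-1/2}$--$H^{1/2}$ duality pairing closes the gap; the remainder of your argument then goes through as written.
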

Before showing the proof, we recall that from \cite[Theorem 7.5]{preprint_hpVEMcorner} the following inverse-type holds  holds:
\begin{equation} \label{inverse estimate negative norm polynomials}
\begin{aligned}
& \Vert \q \Vert_{0,\E} \lesssim (\p+1)^2 \Vert \q \Vert_{-1,\E}
&& \q \in \mathbb P_{\p}(\E),
\end{aligned}
\end{equation}
where:
\begin{equation} \label{dual norm H01}
\Vert \cdot \Vert_{-1,\E} = \sup_{\Phi\in H^1_0(\E)\setminus \{0\}} \frac{(\cdot, \Phi)_{0,\E}}{\vert \Phi \vert_{1,\E}}.
\end{equation}
\begin{proof}{(Proof of Lemma \ref{lem stability bounds space dependent inner product})}
The proof is slightly different from the one for the stability bounds \eqref{stability bounds corner singularities sing stab};
in fact, here we work on the complete virtual space and not on $\ker(\Pinablap)$, being $\Pinablap$ defined in \eqref{classical H1 orthogonality}. 
In the following, we neglect the dependence on the size of the elements since we are assuming that the mesh is fixed; the general case 
follows from a scaling argument.
The upper bound follows from a trace inequality and the stability of orthogonal projection
\[
(\vp, \vp)_{\p,\aux;\E} = \Vert \vp \Vert_{0,\partial \E} ^2  + \Vert \Pizpmu \vp \Vert^2_{0,\E} \lesssim \Vert \vp \Vert_{1,\E} \quad \forall \vp \in \Vp,
\]
and summing up on all the mesh elements and applying the Poincar\`e inequality.
For the lower bound, by using an integration by parts and the definition of the local \emph{auxiliary} space \eqref{our choice virtual space}, we have:
\begin{equation} \label{first step beta low star}
\vert \vp \vert_{1,\E}^2 = \int_{\E} \nabla \vp \cdot \nabla \vp = \int_{\E} - \Delta \vp \Pizpmu \vp +\int_{\partial \E} \frac{\partial \vp}{\partial \n} \vp.
\end{equation}
Owing to \eqref{inverse estimate negative norm polynomials} and recalling that $\Delta \vp \in \mathbb P_{\p-1}(\E)$, we deduce:
\begin{equation} \label{laplacian estimate}
\Vert \Delta \vp \Vert_{0,\E} \lesssim \p^2 \Vert \Delta \vp \Vert_{-1,\E} = \p^2 \sup_{\Phi \in H^1_0(\E)\setminus \{0\}} \frac{(\Delta \vp, \Phi)_{0,\E}}{\vert \Phi \vert_{1,\E}} =
\p^2 \sup_{\Phi \in H^1_0(\E)\setminus \{0\}} \frac{(\nabla \vp, \nabla \Phi)_{0,\E}}{\vert \Phi \vert_{1,\E}} \lesssim \p^2 \vert \vp \vert_{1,\E}.
\end{equation}
We bound now the two terms appearing on the right-hand side of \eqref{first step beta low star}. 
Applying \eqref{laplacian estimate}, we have:
\begin{equation} \label{integral over E}
\int_{\E} \Delta \vp \Pizpmu \vp \le \Vert \Delta \vp \Vert_{0,\E} \Vert \Pizpmu \vp\Vert_{0,\E} \lesssim \p^2 \Vert \Pizpmu \vp \Vert_{0,\E} \vert \vp \vert_{1,\E}.
\end{equation}
Applying next a Neumann trace inequality \cite[Theorem A.33]{SchwabpandhpFEM}, a one dimensional $hp$ inverse inequality, the interpolation estimates \cite{Triebel, tartar} and \eqref{laplacian estimate}, we get:
\begin{equation} \label{integral over boundary E}
\int_{\partial \E} \frac{\partial \vp}{\partial \n} \vp \le \left \Vert \frac{\partial \vp}{\partial \n} \right \Vert_{-\frac{1}{2},\partial \E} \Vert \vp \Vert_{\frac{1}{2},\partial \E}
\lesssim (\vert \vp \vert_{1,\E} + \Vert \Delta \vp \Vert_{0,\E}) \p \Vert \vp \Vert_{0,\partial \E}\lesssim \p^3 \Vert \vp \Vert_{0,\partial \E} \vert \vp \vert_{1,\E}.
\end{equation}
Substituting \eqref{integral over E} and \eqref{integral over boundary E} in \eqref{first step beta low star}, we obtain:
\[
\vert \vp \vert_{1,\E} \lesssim \p^3 (\Vert \vp \Vert_{0,\partial \E} + \Vert \Pizpmu \vp \Vert_{0,\E}),
\]
whence
\[
\vert \vp \vert_{1,\E}^2 \lesssim \p^6 (\vp,\vp)_{\p,\aux;\E}.
\]
The thesis follows summing on all the elements.
\end{proof}
\begin{lem} \label{lemma link between the two space dependent inner products}
Let $(\cdot, \cdot)_{\p,\aux}$ and $(\cdot, \cdot)_{\p}$ be defined as in \eqref{auxiliary space dependent inner product} and \eqref{space dependent inner product}, respectively. Then it holds:
\begin{equation} \label{link between the two space dependent inner products}
\p^{-2} (\vp, \vp)_{\p,\aux} \lesssim (\vp,\vp)_{\p} \lesssim (\vp,\vp)_{\p,\aux} \quad \forall \vp \in \Vp.
\end{equation}
\end{lem}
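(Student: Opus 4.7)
The plan is to mirror the two-step template used in the proof of Lemma~\ref{lemma link between the two stabilizations}, splitting both inner products elementwise into their boundary and bulk contributions and then reassembling.

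First, for the boundary term I would exploit the fact that $\vp|_\e\in\mathbb P_\p(\e)$ on every edge $\e\in\mathcal E^\E$, pull back to $\Ihat$, invoke the Gauss--Lobatto quadrature equivalence \eqref{Bernardi Maday stabilization}, and combine with the two-sided weight bounds \eqref{bounds on Gauss Lobatto weights}. Arguing exactly as in \eqref{equivalence estimate boundary term}, this yields a two-sided comparison between $h_\E^{-1}\|\vp\|_{0,\partial\E}^2$ and the squared-boundary-dof sum $\sum_j \dof_{b,j}^2(\vp)$ whose two directions differ by a factor of $\p^2$, generated by the gap between the lower bound $\rho_j^{\p+1}\gtrsim \p^{-2}$ and the upper bound $\rho_j^{\p+1}\lesssim 1$ on the Gauss--Lobatto weights.

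Second, for the bulk term the crucial observation is that the enhancing constraint \eqref{enhancing constraints} built into \eqref{our choice virtual space} forces $\Pizpmu\vp=\Pizpmd\vp$ on $\Vp$. Indeed \eqref{enhancing constraints} rewrites as $\int_\E \vp\,\m_{\boldalpha}=\int_\E \Pizpmd \vp\,\m_{\boldalpha}$ for $|\boldalpha|=\p-1$; the right-hand side vanishes because $\Pizpmd\vp\in\mathbb P_{\p-2}(\E)$ is $L^2(\E)$-orthogonal to the $\m_{\boldalpha}$'s of degree $\p-1$, so that every Fourier coefficient of $\Pizpmu\vp$ of order exactly $\p-1$ is zero. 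Once this collapse is in place, the expansion \eqref{L2 projectors are equal} combined with Parseval's identity gives
\[
h_\E^{-2}\,\|\Pizpmu\vp\|_{0,\E}^2 \;=\; h_\E^{-2}\,|\E|^2 \sum_{|\boldalpha|\le \p-2}\dof_{i,\boldalpha}^2(\vp),
\]
which, on a fixed mesh, is equivalent to $\sum_{|\boldalpha|\le \p-2}\dof_{i,\boldalpha}^2(\vp)$ with $\p$-independent constants.

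Finally, summing the elementwise boundary and bulk bounds over $\E\in\tauh$ produces \eqref{link between the two space dependent inner products}, the factor $\p^2$ being contributed solely by the boundary term while the interior comparison is $\p$-robust. The main obstacle will be the bulk step: the $\p$-independent equivalence between $h_\E^{-2}\|\Pizpmu\vp\|_{0,\E}^2$ and the interior dof sum rests entirely on the collapse $\Pizpmu\vp=\Pizpmd\vp$ on $\Vp$, a fortunate by-product of the enhancing constraints; without it an additional $h\p$ inverse estimate would be required and the stated $\p^2$ dependence would degrade.
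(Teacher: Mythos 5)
Your proposal is, in substance, exactly the ``straightforward modification of Lemma~\ref{lemma link between the two stabilizations}'' that the paper invokes without writing out, and you have correctly isolated the one genuinely new ingredient: on $\VpE$ the enhancing constraints \eqref{enhancing constraints} kill the degree-$(\p-1)$ moments of $\vp$ (because $\Pizpmd\vp$ is $L^2(\E)$-orthogonal to the $\m_{\boldalpha}$ with $\vert\boldalpha\vert=\p-1$), so $\Pizpmu\vp=\Pizpmd\vp$ and the bulk term collapses, via Parseval, to the internal-dof sum with $\p$-independent constants. That is the heart of the matter and it is right.

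One point deserves more care than your last paragraph gives it: the direction in which the boundary factor $\p^2$ falls. The Gauss--Lobatto comparison \eqref{Bernardi Maday stabilization}--\eqref{bounds on Gauss Lobatto weights} gives, edge by edge,
\[
\Vert\vp\Vert_{0,\partial\E}^2\;\lesssim\;\sum_j\dof_{b,j}^2(\vp)\;\lesssim\;\p^2\,\Vert\vp\Vert_{0,\partial\E}^2,
\]
the $\p^2$ loss coming from $\rho_j^{\p+1}\gtrsim\p^{-2}$ precisely when one bounds the dof sum by the $L^2$ norm, not the other way around. Assembled with the ($\p$-robust) bulk equivalence, your estimates therefore yield $(\vp,\vp)_{\p,\aux}\lesssim(\vp,\vp)_{\p}\lesssim\p^2(\vp,\vp)_{\p,\aux}$, i.e.\ the mirror image of \eqref{link between the two space dependent inner products}: the printed upper bound $(\vp,\vp)_{\p}\lesssim(\vp,\vp)_{\p,\aux}$ would require $\sum_j\dof_{b,j}^2(\vp)\lesssim\Vert\vp\Vert_{0,\partial\E}^2$ uniformly in $\p$, which fails (take for $\vp$ the function whose only nonzero dof is the value at a Gauss--Lobatto node adjacent to a vertex: then $(\vp,\vp)_{\p}=1$ while $\Vert\vp\Vert_{0,\partial\E}^2\approx\p^{-2}$ and $\Pizpmu\vp=0$). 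The total two-sided gap is still $\p^2$, which is all that propagates into Theorem~\ref{theorem stability bounds space dependent inner product} through the ratio $\beta^*(\p)/\beta_*(\p)$ and onwards, so nothing downstream is damaged; but you should state the one-sided constants your argument actually delivers rather than asserting that it reproduces the displayed form verbatim.
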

\begin{proof}
The proof is a straightforward modification of the one of Lemma \ref{lemma link between the two stabilizations}.
\end{proof}
\begin{remark} \label{remark comparison with DG-FEM}
The choice \eqref{space dependent inner product} for the space-dependent inner product is crucial for the construction of the interspace operators, see Section~\ref{subsection interspace operator}.
Moreover, we point out that it coincides with the usual choice for the space-dependent inner product in the $h\p$ DG-Finite Element Framework, see \cite{AntoniettiSartiVerani_2015,AXHSV_calcolo}.
The Finite Element counterpart of Theorem \ref{theorem stability bounds space dependent inner product} is much less technical, since it suffices to choose an $L^2$ orthonormal basis of polynomials as canonical basis;
via Parceval identity, the (scaled) $L^2$ norm is spectrally equivalent to the space-dependent inner product \eqref{space dependent inner product};
thus, the employment of polynomial inverse inequality implies a straightforward relation with the $H^1$ seminorm.
In the VEM framework, it is not possible to proceed similarly for two reasons. The first one is that, at the best of the authors knowledge, inverse inequalities for functions in virtual spaces are not available;
the second reason is that an $L^2$ orthonormal basis of functions in the virtual space is not computable, since such functions are not known explicitly. 
\end{remark}
\subsection{Interspace operators} \label{subsection interspace operator}
In this section, we introduce and construct suitable prolongation and restriction operators acting between the VE spaces $\V_{\ell-1}$ and $\V_{\ell}$, $\ell=p, p-1, \ldots, 2$.
First of all, we stress that the sequence of local spaces $\VpE$, and thus the associated sequence of global spaces $\Vp$, are not nested.
As a consequence, we cannot define the prolongation interspace operator simply as the natural injection, as done for instance in \cite{BrennerScott, BrambleMultigrid, AntoniettiSartiVerani_2015,AXHSV_calcolo}.
In our context, the prolongation operator:
\begin{equation} \label{definition first interspace operator}
\Ipmup: \Vpmu \rightarrow \Vp
\end{equation}
associates to a function $\vpmu$ in $\Vpmu$ a function $\Ipmup \vpmu$ in $\Vp$,
having the same values as $\vpmu$ for all the dofs that are in common with space $\Vpmu$, while the remaining values of the dofs (i.e. the internal higher order ones) are computed using the enhancing constraints presented in definition \eqref{our choice virtual space}.
More precisely, we define $\Ipmup: \Vpmu \rightarrow \Vp$ as:
\begin{equation} \label{definition dofs interspace}
\left\{
\begin{aligned}
\Ipmup \vpmu &= \vpmu,  && \text{on } \partial \E,\\
\int_\E \Ipmup \vpmu \m_{\boldalpha} &= \int_\E \vpmu \m_{\boldalpha}=\dof_{\boldalpha}(\vpmu), && \text{if }\vert \boldalpha \vert \le \p-3,\\
\int_\E \Ipmup \vpmu \m_{\boldalpha} &= \int_\E \Pizpmt \vpmu \m_{\boldalpha}=0, &&\text{if }\vert \boldalpha \vert = \p-2,
\end{aligned}
\right.
\end{equation}
since $\m_{\boldalpha}$ are the elements of an $L^2(E)$-orthonormal basis of $\mathbb P_\p(\E)$.
We recall that the third equation in \eqref{definition dofs interspace} follows from the enhancing constraints in the definition of local spaces \eqref{our choice virtual space}.
The restriction operator $\Ippmu$ is defined as the adjoint of $\Ipmup$ with respect to the space-dependent inner product defined in \eqref{space dependent inner product}, i.e.:
\begin{equation} \label{definition second interspace operator}
(\Ippmu \vp, \wpmu)_{\p-1} = (\vp, \Ipmup \wpmu)_\p \quad \forall \vp \in \Vp, \; \forall \wpmu \in \Vpmu.
\end{equation}
We remark that, thanks to definition \eqref{space dependent inner product} of the space-dependent inner product, the matrix associated with $\Ippmu$ is the transpose of the matrix associated with the operator $\Ipmup$.

\subsection{Smoothing scheme and spectral bounds} \label{subsection smoothing scheme and spectral bounds}
In this section, we introduce and discuss the smoothing scheme entering in the multigrid algorithm.
To this aim, we introduce the following space-dependent norms:
\begin{equation} \label{general discrete norms}
\begin{aligned}
&\vertiii{\vp}_{s,\p}=\sqrt{(\Ap^s \vp, \vp)_\p}
&& \forall \vp \in \Vp, 
&& s\in \mathbb R^+.
\end{aligned}
\end{equation}
We highlight that it holds:
\[
\vertiii{\vp}_{1,\p}^2 = \ap(\vp,\vp).
\]
Since the matrix $\Apbold$ is a symmetric positive definite matrix, there exists an orthonormal (with respect to the inner product $(\cdot, \cdot)_{\p}$) basis of eigenvectors of $\Apbold$, and the associated eigenvalues are real and strictly positive. Let $\{\psi_i, \lambda_i\}_{i=1}^{\dim(\Vp)}$ be the related set of eigenpairs. We show now a bound of the spectrum of $\Apbold$ in terms of $\p$.
\begin{lem} \label{lemma spectrum Ap}
The following upper bound $\Lambda_p$ for the spectrum of $\Apbold$  holds true:
\begin{equation} \label{bound spectrum Ap}
\Lambda_\p \lesssim \frac{\alpha^*(\p)}{\beta_*(\p)},
\end{equation}
where $\alpha^*(\p)$ and $\beta_*(\p)$ are introduced in \eqref{bounds stability constants} and \eqref{bounds space dependent inner product}, respectively.
\end{lem}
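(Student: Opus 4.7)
The plan is to use a standard Rayleigh-quotient argument, combining the continuity of $\a_\p$ (from stability assumption \textbf{(A2)} and the bound $\alpha^*(\p)\lesssim \p^4$) with the lower bound for the space-dependent inner product proved in Theorem~\ref{theorem stability bounds space dependent inner product}.

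Concretely, let $(\psi_i,\lambda_i)$ be any eigenpair of $\Apbold$, where the eigenvectors are normalized with respect to $(\cdot,\cdot)_\p$. First I would write, using the definition \eqref{construction multigrid matrix and rhs} of $\Ap$,
\[
\lambda_i (\psi_i,\psi_i)_\p = (\Ap \psi_i, \psi_i)_\p = \ap(\psi_i,\psi_i).
\]
Next, the upper bound in the continuity estimate that follows from \textbf{(A2)} (summed over $\E\in\tauh$) yields
\[
\ap(\psi_i,\psi_i) \lesssim \alpha^*(\p)\,|\psi_i|_{1,\Omega}^2.
\]
Finally, the lower bound \eqref{bounds space dependent inner product} in Theorem~\ref{theorem stability bounds space dependent inner product} gives
\[
|\psi_i|_{1,\Omega}^2 \lesssim \frac{1}{\beta_*(\p)}(\psi_i,\psi_i)_\p.
\]
Chaining the three estimates and dividing by $(\psi_i,\psi_i)_\p>0$ delivers $\lambda_i \lesssim \alpha^*(\p)/\beta_*(\p)$, and taking the maximum over $i$ produces \eqref{bound spectrum Ap}.

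There is no real obstacle here: the result is an immediate consequence of the previously established equivalences between $\ap(\cdot,\cdot)$, the $H^1$-seminorm, and the space-dependent inner product $(\cdot,\cdot)_\p$. The only subtlety worth mentioning is that the bound \eqref{bounds space dependent inner product} was stated with respect to the full $H^1$-seminorm on $\Omega$, so one has to make sure that the eigenfunctions $\psi_i$ belong to $\Vp \subset H^1_0(\Omega)$, which is automatic by construction; consequently the Poincar\'e inequality implicit in the proof of Theorem~\ref{theorem stability bounds space dependent inner product} causes no issue.
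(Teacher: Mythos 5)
Your argument is correct and is essentially identical to the paper's own proof: both express $\lambda_i$ as the Rayleigh quotient $\ap(\psi_i,\psi_i)/(\psi_i,\psi_i)_\p$, bound the numerator by $\alpha^*(\p)\,|\psi_i|_{1,\Omega}^2$ via \eqref{stability formula}, and then bound $|\psi_i|_{1,\Omega}^2$ by $\beta_*(\p)^{-1}(\psi_i,\psi_i)_\p$ via \eqref{bounds space dependent inner product}. Nothing is missing.
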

\begin{proof}
Let $\lambda_i$ be an eigenvalue of $\Apbold$ and let $\boldsymbol{\psi}_i$ be the associated normalized eigenvector. Then:
\[
\Apbold \cdot \boldsymbol \psi_i = \lambda_i \boldsymbol \psi_i \Longrightarrow (\Ap \psi_i, \psi_i)_\p = \lambda_i (\psi_i, \psi_i)_\p.
\]
Owing to \eqref{stability formula} and \eqref{bounds space dependent inner product}:
\[
\lambda_i = \frac{(\Ap \psi_i, \psi_i)_\p}{(\psi_i, \psi_i)_\p} = \frac{\ap(\psi_i, \psi_i)}{(\psi_i, \psi_i)_\p} \lesssim \alpha^*(\p) \frac{\vert \psi_i \vert^2_{1,\Omega}}{(\psi_i,\psi_i)_{\p}} \lesssim \frac{\alpha^*(\p)}{\beta_*(\p)}.
\]
\end{proof}
As a smoothing scheme, we choose a Richardson scheme, which is given by:
\begin{equation} \label{smoothing operator definition}
\Bp = \widetilde\Lambda_p \cdot \Id_p,
\end{equation}
where $\widetilde\Lambda_\p \le \Lambda_\p$.
A numerical study concerning the (sharp) dependence of $\Lambda_\p$ on $\p$ of the spectral bound $\Lambda_\p$ is presented in Section~\ref{section numerical result}.
\subsection{Error propagator operator} \label{subsection error operator}
As in the classical analysis of the multigrid algorithms \cite{BrennerScott}, in this section we introduce and analyze the error propagator operator.
To this aim, we firstly consider a ``projection'' operator $\Pppmu: \Vp \rightarrow \Vpmu$, defined as the adjoint of $\Ipmup$ with respect to inner product $\ap(\cdot,\cdot)$, i.e.:
\begin{equation} \label{projection operator}
\apmu (\vpmu, \Pppmu \wp) = \ap(\Ipmup \vpmu, \wp) \quad \vpmu\in \Vpmu,\, \wp \in \Vp.
\end{equation}
The following auxiliary result holds.
\begin{lem} \label{lemma projection solution residual equation}
Let $\qpmu \in \Vpmu$ be such that
\[
\Apmu \qpmu  = \rpmu, \quad \text{ with } \rpmu=\Ippmu (\gp - \Ap \zpz),
\]
where $\Ippmu$ is defined in \eqref{definition second interspace operator}, while $\zpz$ is the initial guess of the algorithm and $\Ap$ and $A_{\p-1}$ are defined in \eqref{construction multigrid matrix and rhs}.
Then, it holds:
\begin{equation} \label{projection solution residual equation}
\qpmu = \Pppmu (\zp - \zpz),
\end{equation}
where $\Pppmu$ is defined in \eqref{projection operator}.
\end{lem}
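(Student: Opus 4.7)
The plan is to test the identity \eqref{projection solution residual equation} against an arbitrary $\vpmu \in \Vpmu$ in the inner product $(\cdot,\cdot)_{\p-1}$, and reduce the statement to a chain of defining relations. More precisely, I aim to show
\begin{equation*}
(\Apmu \qpmu, \vpmu)_{\p-1} = (\Apmu \Pppmu(\zp - \zpz), \vpmu)_{\p-1} \quad \forall \vpmu \in \Vpmu,
\end{equation*}
and then conclude by non-degeneracy of $(\cdot,\cdot)_{\p-1}$ together with invertibility of the SPD operator $\Apmu$.

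For the left-hand side, I would substitute the hypothesis $\Apmu \qpmu = \rpmu = \Ippmu(\gp - \Ap \zpz)$ and apply the definition \eqref{definition second interspace operator} of $\Ippmu$ as the adjoint of $\Ipmup$, which moves the prolongation onto the test function and yields
\begin{equation*}
(\Apmu \qpmu, \vpmu)_{\p-1} = (\gp - \Ap\zpz,\, \Ipmup \vpmu)_\p.
\end{equation*}
Next I would exploit the fact that $\zp$ is the exact fine-level solution, i.e.\ $\Ap \zp = \gp$, so that the residual rewrites as $\gp - \Ap \zpz = \Ap(\zp - \zpz)$; using the definition \eqref{construction multigrid matrix and rhs} of $\Ap$ then turns the $(\cdot,\cdot)_\p$ inner product into $\ap(\zp - \zpz, \Ipmup \vpmu)$.

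The final step is to invoke the defining property \eqref{projection operator} of $\Pppmu$, i.e.\ $\ap(\Ipmup \vpmu, \wp) = \apmu(\vpmu, \Pppmu \wp)$, after using symmetry of $\ap(\cdot,\cdot)$; applying once more the definition \eqref{construction multigrid matrix and rhs} to the bilinear form $\apmu$ gives exactly $(\Apmu \Pppmu(\zp - \zpz), \vpmu)_{\p-1}$. Since $\vpmu \in \Vpmu$ is arbitrary and $\Apmu$ is invertible, this implies $\qpmu = \Pppmu(\zp - \zpz)$. I do not foresee any real obstacle: the statement is purely algebraic and expresses the compatibility of the coarse residual equation with the $\ap$-projection, with the proof reducing to the careful pairing of the four adjoint relations satisfied by $\Ippmu$, $\Ipmup$, $\Ap$, $\Apmu$ and $\Pppmu$.
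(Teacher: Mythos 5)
Your proposal is correct and follows essentially the same argument as the paper: both proofs test against an arbitrary $\vpmu \in \Vpmu$, substitute the hypothesis $\Apmu \qpmu = \rpmu$, move $\Ippmu$ across via its adjoint definition \eqref{definition second interspace operator}, rewrite the residual as $\Ap(\zp-\zpz)$ using $\Ap\zp = \gp$, and then invoke \eqref{construction multigrid matrix and rhs} and \eqref{projection operator} to land on $\apmu(\Pppmu(\zp-\zpz),\vpmu)$. The only cosmetic difference is that you conclude via non-degeneracy of $(\cdot,\cdot)_{\p-1}$ and invertibility of $\Apmu$, while the paper concludes directly from non-degeneracy of $\apmu(\cdot,\cdot)$ on $\Vpmu$; these are equivalent.
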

\begin{proof}
As the proof is very similar to its analogous version in \cite[Lemma 6.4.2]{BrennerScott}, here we briefly sketch it. For all $\vpmu \in \Vpmu$:
\[
\begin{split}
& \apmu (\qpmu, \vpmu) =(\Apmu \qpmu, \vpmu)_{\p-1} = (\rpmu, \vpmu)_{\p-1} = (\Ippmu (\gp - \Ap \zpz), \vpmu)_{\p-1}\\
& = (\Ap(\zp - \zpz) , \Ipmup \vpmu)_{\p} = \ap(\zp - \zpz, \Ipmup \vpmu) = \apmu (\Pppmu(\zp - \zpz), \vpmu).
\end{split}
\]
\end{proof}
We now introduce the error propagator operator:
\begin{equation} \label{definition error operator}
\begin{cases}
\mathbb E_{1,\md}  \vp = 0,\\
\mathbb E_{\p,\md} \vp =\left[ \Gpmd  \left( \Id_\p - \Ipmup (\Id_{\p-1} - \mathbb E_{\p-1,\md}^2) \Pppmu  \right)   \right] \vp,
\end{cases}
\end{equation}
where the relaxation operator $\Gp$ is defined as:
\begin{equation} \label{definition relaxation operator}
\Gp = \Id_\p - \Bp^{-1} \Ap,\quad \quad \quad \Bp \text{ being introduced in \eqref{smoothing operator definition}}.
\end{equation}
The following result holds.
\begin{thm} \label{theorem error operator}
Let $\zp$ and $\zp^{(\md+1)}$ be the exact and the multigrid solutions associated with system \eqref{multigrid system}, respectively.
Then, given $\zpz$ initial guess of the algorithm, it holds:
\begin{equation} \label{error operator equality}
\zp - \zp^{(\md+1)} = \mathbb E_{\p,\md}(\zp - \zpz),
\end{equation}
where $\mathbb E_{\p,\md}$ is defined in \eqref{definition error operator}.
\end{thm}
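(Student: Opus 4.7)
The plan is to prove the identity by induction on the level $\p$, carefully tracking how the error evolves through each stage of Algorithm \ref{table algorithm}.

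\textbf{Base case.} For $\p=1$ the coarsest level is solved by a direct method, hence $\zp^{(\md+1)} = \zp$ and the error is zero, which matches $\mathbb E_{1,\md} = 0$.

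\textbf{Inductive step.} Assume \eqref{error operator equality} holds at level $\p-1$. I would first analyse the coarse grid correction. Let $\qpmu \in \Vpmu$ be the exact solution of the restricted residual equation $\Apmu \qpmu = \rpmu = \Ippmu(\gp - \Ap \zpz)$. By Lemma \ref{lemma projection solution residual equation}, $\qpmu = \Pppmu(\zp - \zpz)$. Now apply the induction hypothesis twice to the two recursive calls $\ebarbold_{\p-1} = \texttt{MG}(\p-1, \rpmu, \mathbf 0, \md)$ and $\ebold_{\p-1} = \texttt{MG}(\p-1, \rpmu, \ebarbold_{\p-1}, \md)$. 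The first call, starting from the zero initial guess, gives
\[
\qpmu - \ebarbold_{\p-1} = \mathbb E_{\p-1,\md}(\qpmu - \mathbf 0) = \mathbb E_{\p-1,\md}\,\Pppmu(\zp - \zpz),
\]
and a second application to the call starting from $\ebarbold_{\p-1}$ yields
\[
\qpmu - \ebold_{\p-1} = \mathbb E_{\p-1,\md}(\qpmu - \ebarbold_{\p-1}) = \mathbb E_{\p-1,\md}^{\,2}\,\Pppmu(\zp - \zpz).
\]
Substituting back produces $\ebold_{\p-1} = (\Id_{\p-1} - \mathbb E_{\p-1,\md}^{\,2})\,\Pppmu(\zp - \zpz)$, and therefore the iterate after the correction step is
\[
\zp^{(1)} = \zpz + \Ipmup\bigl(\Id_{\p-1} - \mathbb E_{\p-1,\md}^{\,2}\bigr)\Pppmu(\zp - \zpz).
\]
Subtracting from $\zp$ gives the error after coarse correction:
\[
\zp - \zp^{(1)} = \bigl(\Id_\p - \Ipmup(\Id_{\p-1} - \mathbb E_{\p-1,\md}^{\,2})\Pppmu\bigr)(\zp - \zpz).
\]

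\textbf{Post-smoothing.} For a Richardson step $\zp^{(i)} = \zp^{(i-1)} + \Bp^{-1}(\gp - \Ap \zp^{(i-1)})$ one has $\zp - \zp^{(i)} = (\Id_\p - \Bp^{-1}\Ap)(\zp - \zp^{(i-1)}) = \Gp(\zp - \zp^{(i-1)})$, using $\Ap \zp = \gp$ and definition \eqref{definition relaxation operator}. Iterating the $\md$ smoothing steps gives $\zp - \zp^{(\md+1)} = \Gpmd(\zp - \zp^{(1)})$. Combining with the previous display yields exactly
\[
\zp - \zp^{(\md+1)} = \Gpmd\bigl(\Id_\p - \Ipmup(\Id_{\p-1} - \mathbb E_{\p-1,\md}^{\,2})\Pppmu\bigr)(\zp - \zpz) = \mathbb E_{\p,\md}(\zp - \zpz),
\]
which closes the induction. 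The only genuinely delicate point is the correct application of the induction hypothesis to the two nested recursive calls of the W-cycle, which is what produces the factor $\Id_{\p-1} - \mathbb E_{\p-1,\md}^{\,2}$ in \eqref{definition error operator}; everything else is a linear bookkeeping of errors through the Richardson smoother and the coarse correction.
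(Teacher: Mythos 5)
Your proposal is correct and follows essentially the same route as the paper: induction on the level, Lemma \ref{lemma projection solution residual equation} to identify $\qpmu=\Pppmu(\zp-\zpz)$, the induction hypothesis applied to the two recursive W-cycle calls to produce the factor $\Id_{\p-1}-\mathbb E_{\p-1,\md}^{2}$, and the observation that each Richardson post-smoothing step multiplies the error by $\Gp$. Your version merely unrolls the two recursive calls a bit more explicitly than the paper does; there is no substantive difference.
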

\begin{proof}
We follow the guidelines of \cite[Lemma 6.6.2]{BrennerScott} and proceed by induction.
The initial step of the induction is straightforward since the system is solved exactly at the coarsest level. Therefore, we assume \eqref{error operator equality} true up to $\p-1$ and we prove the claim for $\p$.

Let $\qpmu$, $\epmu$ and $\epmubar$ be introduced in Algorithm \ref{table algorithm}; owing to the induction hypothesis applied to the residual equation, we have:
\[
\qpmu - \epmu = \mathbb E _{\p-1,\md} (\qpmu - \epmubar) = \mathbb E _{\p-1,\md}^2 (\qpmu - 0) = \mathbb E _{\p-1,\md}^2 (\qpmu),
\]
whence:
\begin{equation} \label{residual equality}
\epmu = \qpmu - \mathbb E _{\p-1,\md}^2(\qpmu) = (\Id_{\p-1} - \mathbb E _{\p-1,\md}^2) \qpmu.
\end{equation}
Thus:
\begin{equation} \label{auxiliary step in error operator}
\begin{split}
\zp - \zp^{(\md+1)}	& = \zp - \zp^{(\md)} - \Bp^{-1} (\gp - \Ap \zp^{(\md)})  = (\Id_\p - \Bp^{-1}\Ap) (\zp - \zp^{(\md)})\\
				& = (\Id_\p - \Bp^{-1}\Ap) ^{\md} (\zp - \zp^{(1)}) = \Gpmd (\zp - \zp^{(1)}) = \Gpmd (\zp-\zpz - \Ipmup \epmu).
\end{split}
\end{equation}
Inserting \eqref{projection solution residual equation} and \eqref{residual equality} in \eqref{auxiliary step in error operator}, we get:
\[
\begin{split}
\zp - \zp^{(\md+1)} 	& = \Gpmd \left( \zp - \zpz - \Ipmup(\Id_{\p-1} - \mathbb E _{\p-1,\md}^2) \Pppmu (\zp-\zpz)  \right) \\
				& =  \Gpmd \left( \Id_\p - \Ipmup(\Id_{\p-1} - \mathbb E _{\p-1,\md}^2) \Pppmu  \right) (\zp-\zpz).\\
\end{split}
\]
\end{proof}

\section{Convergence analysis of the multigrid algorithm} \label{section convergence analysis of the multigrid method}
We prove in Section~\ref{subsection convergence of the multilevel algorithm} the convergence of the multigrid algorithm presented in Section~\ref{section multigrid methods with non inherited sublevel solvers}.
For the purpose, we preliminarily introduce some technical tools.
In Section~\ref{subsection generalized Cauchy Schwarz inequality and smoothing property}, we discuss the so-called smoothing property associated with the Richardson scheme \eqref{smoothing operator definition}.
In Section~\ref{section bounds on operators}, we show bounds related to the prolongation operator $\Ipmup$ \eqref{definition first interspace operator} and its adjoint with respect to the space-dependent inner product \eqref{space dependent inner product}.
Bounds concerning the error correction steps are the topic of Section~\ref{subsection error correction step bounds}.
Finally, in Sections \ref{subsection convergence of the two level algorithm} and \ref{subsection convergence of the multilevel algorithm},
we treat the convergence of the two-level  and multilevel algorithm, respectively.

\subsection{Smoothing property} \label{subsection generalized Cauchy Schwarz inequality and smoothing property}
\begin{lem} \label{lemma smoothing property}
(smoothing property)
For any $\vp \in \Vp$, it holds that:
\begin{equation} \label{smoothing estimates}
\begin{split}
& \vertiii{\Gpmd \vp} _{1,\p} \le \vertiii{\vp}_{1,\p},\\
& \vertiii{\Gpmd \vp} _{s,\p} \lesssim \left( \frac{\alpha^*(\p)}{\beta_*(\p)}   \right)^{\frac{s-t}{2}}(1+\md)^{\frac{t-s}{2}} \vertiii{\vp}_{t,\p},
\end{split}
\end{equation}
for some $0\le t\le s\le 2$, $\md\in \mathbb N \setminus{\{0\}}$, where $\alpha^*(\p)$ and $\beta_*(\p)$ are defined in \eqref{bounds stability constants} and in \eqref{bounds space dependent inner product}, respectively.
\end{lem}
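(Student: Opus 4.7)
The plan is to diagonalize everything simultaneously in the eigenbasis of $\Ap$ with respect to the space-dependent inner product $(\cdot,\cdot)_\p$, so that both inequalities reduce to pointwise scalar estimates on the eigenvalues of $\Ap$. Concretely, let $\{\psi_i,\lambda_i\}_{i=1}^{\dim(\Vp)}$ be the eigenpairs introduced just before Lemma~\ref{lemma spectrum Ap}, chosen orthonormal with respect to $(\cdot,\cdot)_\p$. Expanding a generic $\vp=\sum_i c_i\psi_i$, the norms in \eqref{general discrete norms} become weighted sums
\[
\vertiii{\vp}_{s,\p}^2=\sum_i \lambda_i^s\, c_i^2,
\]
while the Richardson relaxation operator $\Gp=\Id_\p-\widetilde\Lambda_\p^{-1}\Ap$ acts diagonally with eigenvalues $\mu_i=1-\lambda_i/\widetilde\Lambda_\p$. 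Since $\widetilde\Lambda_\p$ majorizes the spectrum (the Richardson scheme \eqref{smoothing operator definition} is only non-expansive under this choice, in view of Lemma \ref{lemma spectrum Ap}), each $\mu_i\in[0,1]$, and hence
\[
\vertiii{\Gpmd\vp}_{s,\p}^2=\sum_i \lambda_i^s\,\mu_i^{2\md}\,c_i^2.
\]

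The first inequality is then immediate, as $\mu_i^{2\md}\le 1$ gives $\sum_i\lambda_i\mu_i^{2\md}c_i^2\le\sum_i\lambda_ic_i^2$. For the second, I would factor $\lambda_i^s\mu_i^{2\md}=(\lambda_i^{s-t}\mu_i^{2\md})\,\lambda_i^t$ and bound the first factor uniformly in $i$. Using the identity $\lambda_i=\widetilde\Lambda_\p(1-\mu_i)$,
\[
\lambda_i^{s-t}\mu_i^{2\md}=\widetilde\Lambda_\p^{s-t}\,(1-\mu_i)^{s-t}\,\mu_i^{2\md}\le \widetilde\Lambda_\p^{s-t}\,\max_{\nu\in[0,1]}(1-\nu)^{s-t}\nu^{2\md}.
\]
A direct calculus computation shows that $(1-\nu)^\alpha\nu^\beta$ attains its maximum on $[0,1]$ at $\nu^\ast=\beta/(\alpha+\beta)$ with value $\alpha^\alpha\beta^\beta/(\alpha+\beta)^{\alpha+\beta}$; specialized to $\alpha=s-t\in[0,2]$ and $\beta=2\md$, this is bounded by a constant (depending only on the compact range of $s-t$) times $(1+\md)^{-(s-t)}$. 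Combining with $\widetilde\Lambda_\p\le\Lambda_\p\lesssim\alpha^\ast(\p)/\beta_\ast(\p)$ from Lemma \ref{lemma spectrum Ap}, I obtain
\[
\lambda_i^{s-t}\mu_i^{2\md}\lesssim \left(\frac{\alpha^\ast(\p)}{\beta_\ast(\p)}\right)^{s-t}(1+\md)^{-(s-t)}
\]
uniformly in $i$. Plugging this back and factoring $\lambda_i^t$ against the coefficients $c_i^2$ produces $\vertiii{\vp}_{t,\p}^2$, and taking square roots yields exactly the stated inequality.

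The whole argument is a spectral reduction with all $\p$-dependence routed through Lemma \ref{lemma spectrum Ap}, so there is no real obstacle of a structural nature; the only slightly delicate point is verifying that the scalar bound $(1-\nu)^{s-t}\nu^{2\md}\lesssim(1+\md)^{-(s-t)}$ has a constant uniform in $s,t\in[0,2]$, which is clear from the closed-form expression of the maximum (noting $(s-t)^{s-t}$ is bounded on $[0,2]$ with the convention $0^0=1$, and $(2\md)^{2\md}/(s-t+2\md)^{2\md}\le 1$). Given this scalar fact, the lemma follows by the diagonalization above, with the smoothing property expressing the usual trade-off between the number $\md$ of Richardson steps and the condition-number-like factor $\alpha^\ast(\p)/\beta_\ast(\p)$.
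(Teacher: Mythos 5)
Your proof is correct and follows essentially the same route as the paper: expansion in the $(\cdot,\cdot)_\p$-orthonormal eigenbasis of $\Ap$, diagonal action of $\Gp$, and reduction to the scalar maximization of $x^{s-t}(1-x)^{2\md}$ on $[0,1]$ combined with the spectral bound of Lemma \ref{lemma spectrum Ap}. The only difference is that you work out the elementary maximum $\alpha^\alpha\beta^\beta/(\alpha+\beta)^{\alpha+\beta}\lesssim(1+\md)^{-(s-t)}$ explicitly, where the paper cites \cite[Lemma 4.2]{AntoniettiSartiVerani_2015} for the same fact.
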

\begin{proof}
The proof is analogous to that in \cite[Lemma 4.3]{AntoniettiSartiVerani_2015}. For the sake of clarity, we report the details.
To start with, we rewrite $\vp$ in terms of the orthonormal basis of eigenvectors $\{\psi_i\}_{i=1}^{\dim(\Vp)}$ of $\Apbold$ as follows:
\[
\vp = \sum_{i=1}^{\dim(\Vp)} \v_i \psi_i,\quad \forall \vp \in \Vp.
\]
As a consequence,
\[
\Gpmd \vp = \left(\Id_p  - \frac{1}{\Lambda_\p} \Ap \right)^{\md} \vp = \sum_{i=1}^{\dim(\Vp)} \left(1- \frac{\lambda_i}{\Lambda_\p} \right)^{\md} \v_i \psi_i,
\]
where $\Lambda_\p$ is the upper bound for the spectrum of $\Apbold$ presented in Lemma \ref{lemma spectrum Ap}.
Then, owing to the orthonormality of $\psi_i$ with respect to the inner product $(\cdot, \cdot)_\p$, we have:
\[
\begin{split}
& \vertiii{\Gpmd \vp}^2_{s,\p} = \sum_{i=1}^{\dim(\Vp)} \left(   1-\frac{\lambda_i}{\Lambda_\p} \right)^{2\md} \v_i^2 \lambda_i^s
			= \Lambda _\p^{s-t} \sum_{i=1}^{\dim(\Vp)} \left( 1- \frac{\lambda_i}{\Lambda_\p}  \right)^{2\md} \frac{\lambda_i^{s-t}}{\Lambda_\p^{s-t}} \lambda_i^t \v_i^2\\
& \le \Lambda_\p^{s-t} \max_{x\in [0,1]} (x^{s-t} (1-x)^{2\md}) \vertiii{\vp}^2_{t,\p} \lesssim \left( \frac{\alpha^*(\p)}{\beta_*(\p)}  \right)^{s-t} (1+\md)^{t-s} \vertiii{\vp}^2_{t,\p},
\end{split}
\]
where in the last inequality we used \cite[Lemma 4.2]{AntoniettiSartiVerani_2015} and \eqref{bound spectrum Ap}.
\end{proof}

\subsection{Prolongation and projection operators} \label{section bounds on operators}
In this section, we prove bounds in the $\vertiii{\cdot}_{1,\p}$ norm of the prolongation and the projection operators defined in \eqref{definition first interspace operator} and \eqref{projection operator}, respectively.
We stress that this set of results deeply relies on the \emph{new} enhancing stategy presented in the definition of the virtual space \eqref{our choice virtual space}.

We start with a bound on the prolongation operator.
\begin{thm} \label{theorem stability interspace operator}
(bound on the prolongation operator) There exists $\cstab$, positive constant independent of the discretization and multigrid parameters, such that:
\begin{equation} \label{stability interspace operator}
\vertiii{\Ipmup \vpmu}_{1,\p} \le \cstab \sqrt{\frac{\alpha^*(\p) \beta^*(\p)}{\alpha_*(\p) \beta_*(\p)}} \vertiii{\vpmu}_{1,\p-1} \quad \forall \vpmu \in \Vpmu,
\end{equation}
where $\alpha_*(\p)$, $\alpha^*(\p)$ are introduced in \eqref{bounds stability constants} whereas $\beta_*(\p)$ and $\beta^*(\p)$ are introduced in \eqref{bounds space dependent inner product}.
\end{thm}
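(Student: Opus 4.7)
The plan is to cascade $\vertiii{\Ipmup \vpmu}_{1,\p}$ through a chain of quantities controlled by the various stability constants, and eventually back to $\vertiii{\vpmu}_{1,\p-1}$. First I would apply the upper bound in \eqref{stability formula} at level $\p$ to pass from the energy norm to the $H^1$ seminorm, picking up the factor $\alpha^*(\p)$. Next I would apply Lemma \ref{lem stability bounds space dependent inner product} to bound $|\Ipmup \vpmu|_{1,\Omega}^2$ by $(\Ipmup \vpmu, \Ipmup \vpmu)_{\p,\aux}/\widetilde\beta_*(\p)$, moving the estimate into the auxiliary space-dependent inner product, which unlike $(\cdot,\cdot)_\p$ is defined in terms of $L^2$ quantities and $L^2$ projections.

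The crux of the argument, and the next step I would carry out, is the elementwise identity
\[
(\Ipmup \vpmu, \Ipmup \vpmu)_{\p,\aux;\E} = (\vpmu, \vpmu)_{\p-1,\aux;\E} \quad \forall \E \in \tauh.
\]
The boundary contribution matches trivially because $\Ipmup \vpmu = \vpmu$ on $\partial \E$ by \eqref{definition dofs interspace}. For the bulk contribution, I would first use the enhancing property \eqref{enhancing constraints} of the auxiliary space at level $\p$ to identify $\Pizpmu w$ with $\Pizpmd w$ for every $w \in \VpE$ (the moments against $\m_{\boldalpha}$ with $|\boldalpha|=\p-1$ vanish by the enhancing together with $L^2$-orthogonality of $\{\m_{\boldalpha}\}$), then expand $\Pizpmd (\Ipmup \vpmu)$ in the basis $\{\m_{\boldalpha}\}$ and substitute \eqref{definition dofs interspace} (internal moments of order $\le \p-3$ preserved, order $\p-2$ set to zero), and finally invoke the enhancing at level $\p-1$ to identify $\Pizpmt \vpmu$ with $\Pizpmd \vpmu$. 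Together these three facts give $\Pizpmu(\Ipmup \vpmu) = \Pizpmd \vpmu$, so the two interior contributions coincide exactly.

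To close the chain, I would apply the upper bound in Lemma \ref{lem stability bounds space dependent inner product} at level $\p-1$ to replace $(\vpmu, \vpmu)_{\p-1,\aux}$ with $\widetilde\beta^*(\p-1)\,|\vpmu|_{1,\Omega}^2$, and then the lower bound in \eqref{stability formula} at level $\p-1$ to reach $\apmu(\vpmu, \vpmu) = \vertiii{\vpmu}_{1,\p-1}^2$ up to the factor $1/\alpha_*(\p-1)$. Finally, Lemma \ref{lemma link between the two space dependent inner products} lets me trade $\widetilde\beta_*,\widetilde\beta^*$ for $\beta_*,\beta^*$ at the cost of polynomial factors in $\p$ that are absorbed into $\cstab$; likewise the level-$(\p-1)$ stability constants are comparable to the level-$\p$ ones since all of $\alpha_*, \alpha^*, \beta_*, \beta^*$ vary only polynomially in $\p$ by \eqref{bounds stability constants} and \eqref{bounds space dependent inner product}.

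The main obstacle is the elementwise identity above: it is where the specific design of this paper truly pays off, since it simultaneously exploits the boundary preservation built into $\Ipmup$, the deliberate vanishing of the new top-order internal moments prescribed in \eqref{definition dofs interspace}, and the enhancing constraints \eqref{enhancing constraints} that align $\Pizpmu$ and $\Pizpmd$ on $\VpE$. Without the enhancing, $\Pizpmu$ would not even be computable on virtual functions, and the whole cascade would collapse; without the zero top-order moments in the definition of $\Ipmup$, the two auxiliary inner products would differ by a non-removable term and one would only obtain an inequality with a $\p$-dependent factor in the wrong direction.
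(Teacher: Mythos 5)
Your proof is correct and arrives at the stated bound, but it takes a genuinely different middle route from the paper. The paper's chain is: energy norm $\to$ $H^1$ seminorm via the upper bound in \eqref{stability formula}, then $\to$ the dof-based inner product $(\cdot,\cdot)_\p$ via the lower bound in \eqref{bounds space dependent inner product}, at which point the transfer identity $(\Ipmup\vpmu,\Ipmup\vpmu)_\p=(\vpmu,\vpmu)_{\p-1}$ is read off directly from \eqref{definition dofs interspace}: boundary dofs are preserved and internal dofs are either preserved or set to zero, so the sums of squared dofs coincide. You instead route the estimate through the auxiliary inner product $(\cdot,\cdot)_{\p,\aux}$ of Lemma \ref{lem stability bounds space dependent inner product} and prove the exact elementwise identity $(\Ipmup\vpmu,\Ipmup\vpmu)_{\p,\aux;\E}=(\vpmu,\vpmu)_{\p-1,\aux;\E}$, whose bulk part rests on the projector identity $\Pizpmu(\Ipmup\vpmu)=\Pizpmd(\Ipmup\vpmu)=\Pizpmt\vpmu=\Pizpmd\vpmu$; your three-step derivation (enhancing constraints at level $\p$, the definition of $\Ipmup$, enhancing constraints at level $\p-1$) is correct. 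This is more machinery than the paper uses, but it buys a cleaner boundary term: in your identity the boundary contributions agree exactly because $\Ipmup\vpmu=\vpmu$ on $\partial\E$, whereas the paper's dof-level claim $\dof^2_{b,j}(\Ipmup\vpmu)=\dof^2_{b,j}(\vpmu)$ silently compares sums over different sets of edge nodes at levels $\p$ and $\p-1$, which is only an equivalence up to constants via \eqref{Bernardi Maday stabilization}.

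One sentence needs repair: you say Lemma \ref{lemma link between the two space dependent inner products} lets you trade $\widetilde\beta_*,\widetilde\beta^*$ for $\beta_*,\beta^*$ ``at the cost of polynomial factors in $\p$ that are absorbed into $\cstab$.'' Since $\cstab$ must be independent of $\p$, no $\p$-dependent factor can be absorbed into it. Fortunately the trades you actually need are loss-free: the right inequality in \eqref{link between the two space dependent inner products} gives $(\vp,\vp)_{\p,\aux}\gtrsim(\vp,\vp)_\p$, hence $\widetilde\beta_*(\p)\gtrsim\beta_*(\p)$ with no extra power of $\p$, and $\widetilde\beta^*(\p-1)\lesssim 1$ is replaced by $\beta^*(\p)$ at constant cost since the latter is bounded below; the only genuinely $\p$-dependent factor in \eqref{link between the two space dependent inner products}, the $\p^{-2}$ on the left, never enters your chain. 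State this explicitly and the argument is complete.
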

\begin{proof}
Recalling bounds \eqref{stability formula}, \eqref{stability bounds space dependent inner product} and the definition of the auxiliary space-dependent inner product \eqref{auxiliary space dependent inner product}, we have:
\begin{equation} \label{aux step c}
\begin{split}
& \vertiii{\Ipmup \vpmu}^2_{1,\p} = \sum_{\E \in \tauh} \vertiii{\Ipmup \vpmu}^2_{1,\p;\E} = \sum_{\E \in \tauh} \apE(\Ipmup \vpmu, \Ipmup \vpmu)\\
& \lesssim \alpha^*(\p) \a(\Ipmup \vpmu, \Ipmup \vpmu) \lesssim \frac{\alpha^*(\p)}{\beta_*(\p)} (\Ipmup \vpmu,  \Ipmup \vpmu)_{\p}.
\end{split}
\end{equation}
We recall that:
\[
(\Ipmup \vpmu, \Ipmup \vpmu)_\p = \sum_{j=1}^{\dim(\Vp)} \dof_j^2(\Ipmup \vpmu).
\]
Since $\{\mathbb B_{\p}(\partial \E)\}_{\p=1}^{+\infty}$ defined in \eqref{boundary space} is a sequence of \emph{nested} space for all $\E \in \tauh$, we directly have:
\[
\dof_{b,j}^2 (\Ipmup \vpmu) = \dof_{b,j}^2(\vpmu),
\]
where $\dof _{b,j}(\cdot)$ denotes the $j$-th boundary dof.\\

Now, we deal with the internal degrees of freedom.
We cannot use the above nestedness argument since the sequence $\{\Vp\}_{\p=1}^{\dim(\Vp)}$ is made of non-nested spaces.
In order to overcome this hindrance, recalling the definition of the prolongation operator \eqref{definition dofs interspace}, we write:
\[
\dof_{i,j}(\Ipmup \vpmu) = \frac{1}{\vert \E \vert} \int_\E \Ipmup \vpmu \m_{\boldalpha} = \begin{cases}
\frac{1}{\vert \E \vert} \int_\E \vpmu \m_{\boldalpha} & \text{if } \vert \boldalpha\vert \le \p-3,\\
0 & \text{if } \vert \boldalpha\vert  =  \p-2,
\end{cases}
\]
where $\dof_{i,j}(\cdot)$ denotes the $j$-th internal dof. As a consequence, it holds:
\begin{equation} \label{killing prolongation}
(\Ipmup \vpmu, \Ipmup \vpmu)_\p = (\vpmu , \vpmu)_{\p-1} = \vertiii{\vpmu}_{0,\p-1}^2.
\end{equation}
Next, we relate $\vertiii{\cdot}_{0,\p-1}$ with $\vertiii{\cdot}_{1,\p-1}$. We note that:
\begin{equation} \label{bounding L2 with H1 discrete}
\vertiii{\vpmu}^2_{0,\p-1} \lesssim \beta^*(\p) \vert \vpmu \vert^2_{1,\E} \lesssim \frac{\beta^*(\p)}{\alpha_*(\p)} \vertiii{\vpmu}_{1,\p-1}^2,
\end{equation}
where we used in the last but one and in the last inequalities \eqref{bounds space dependent inner product} and \eqref{stability formula}, respectively.

Combining \eqref{aux step c}, \eqref{killing prolongation} and \eqref{bounding L2 with H1 discrete}, we get the claim.
\end{proof}

We show an analogous bound for the ``projection'' operator $\Pppmu$ introduced in \eqref{projection operator}.
\begin{thm} \label{theorem stability projection operator}
(bound on the ``projection'' operator)
There exists $\cstab$, positive constant independent of the discretization and multigrid parameters, such that:
\begin{equation} \label{stability projection operator}
\vertiii{\Pppmu \vp}_{1,\p-1} \le \cstab \sqrt{\frac{\alpha^*(\p) \beta^*(\p)}{\alpha_*(\p) \beta_*(\p)}} \vertiii{\vp}_{1,\p} \quad \forall \vp \in \Vp,
\end{equation}
where $\alpha_*(\p)$, $\alpha^*(\p)$ are introduced in \eqref{bounds stability constants} whereas $\beta_*(\p)$ and $\beta^*(\p)$ are introduced in \eqref{bounds space dependent inner product}.
The constant $\cstab$ is the same constant appearing in the statement of Theorem \ref{theorem stability interspace operator}
\end{thm}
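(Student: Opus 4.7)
The plan is to exploit the definition of $\Pppmu$ as the $\a$-adjoint of $\Ipmup$, together with the already established bound on the prolongation operator in Theorem \ref{theorem stability interspace operator}. The argument is essentially a one-line duality estimate, so no new analytical ingredient beyond what has already been set up is needed; the constant $\cstab$ will come out identical to the one in Theorem \ref{theorem stability interspace operator}, which is why the statement says ``The constant $\cstab$ is the same''.

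More precisely, given any $\vp \in \Vp$, I would start from the identity
\[
\vertiii{\Pppmu \vp}_{1,\p-1}^2 = \apmu(\Pppmu \vp, \Pppmu \vp),
\]
which uses that $\vertiii{\cdot}_{1,q}^2 = \a_q(\cdot,\cdot)$ for every level. Then, using the defining relation \eqref{projection operator} of $\Pppmu$ with the test function $\vpmu := \Pppmu \vp \in \Vpmu$, I would rewrite the right-hand side as
\[
\apmu(\Pppmu \vp, \Pppmu \vp) = \ap(\Ipmup \Pppmu \vp, \vp).
\]
Now $\ap(\cdot,\cdot)$ is a symmetric positive semidefinite bilinear form on $\Vp$ (actually an inner product, by (\textbf{A2})), so the Cauchy–Schwarz inequality applies and yields
\[
\ap(\Ipmup \Pppmu \vp, \vp) \le \vertiii{\Ipmup \Pppmu \vp}_{1,\p}\,\vertiii{\vp}_{1,\p}.
\]

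The key step is then to apply Theorem \ref{theorem stability interspace operator} to the element $\Pppmu \vp \in \Vpmu$, which gives
\[
\vertiii{\Ipmup \Pppmu \vp}_{1,\p} \le \cstab \sqrt{\frac{\alpha^*(\p)\,\beta^*(\p)}{\alpha_*(\p)\,\beta_*(\p)}}\;\vertiii{\Pppmu \vp}_{1,\p-1}.
\]
Substituting this into the previous chain of inequalities leads to
\[
\vertiii{\Pppmu \vp}_{1,\p-1}^2 \le \cstab \sqrt{\frac{\alpha^*(\p)\,\beta^*(\p)}{\alpha_*(\p)\,\beta_*(\p)}}\;\vertiii{\Pppmu \vp}_{1,\p-1}\,\vertiii{\vp}_{1,\p},
\]
and dividing by $\vertiii{\Pppmu \vp}_{1,\p-1}$ (a trivial separate check disposes of the case when this quantity vanishes) yields exactly the claimed bound with the same constant $\cstab$.

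I do not anticipate any substantial obstacle: the enhancement argument and the non-nestedness of the spaces $\Vp$ have already been absorbed into the proof of Theorem \ref{theorem stability interspace operator}, and from the point of view of the present statement the operator $\Pppmu$ is \emph{defined} by duality, so its stability is transferred from the prolongation's stability by a single application of Cauchy–Schwarz in the energy inner product. The only minor care needed is to ensure one works with the $\a$-adjoint (rather than the $(\cdot,\cdot)_\p$-adjoint, which is $\Ippmu$); this is precisely how \eqref{projection operator} is set up.
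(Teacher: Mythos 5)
Your proposal is correct and follows essentially the same route as the paper: both exploit the $\a$-adjoint definition \eqref{projection operator}, a Cauchy--Schwarz inequality in the energy inner product, and the prolongation bound of Theorem \ref{theorem stability interspace operator}. The only cosmetic difference is that the paper writes $\vertiii{\Pppmu \vp}_{1,\p-1}$ as a maximum over test functions $\wpmu \in \Vpmu$ while you test directly with $\wpmu = \Pppmu \vp$; the two presentations are equivalent.
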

\begin{proof}
It suffices to note that:
\[
\vertiii{\Pppmu \vp} _{1,\p-1} = \max_{\wpmu \in \Vpmu \setminus{\{0\}}} \frac{\apmu (\Pppmu \vp, \wpmu)}{\vertiii{\wpmu}_{1,\p-1}} = \max_{\wpmu \in \Vpmu \setminus{\{0\}}} \frac{\ap (\vp, \Ipmup \wpmu)}{\vertiii{\wpmu}_{1,\p-1}}
\]
and then apply Theorem \ref{theorem stability interspace operator} along with a Cauchy-Schwarz inequality.
\end{proof}

\subsection{Error correction step} \label{subsection error correction step bounds}
In this section, we prove a bound for the error correction step discussed in the multigrid algorithm, see Table \ref{table pth level}.
\begin{thm} \label{theorem approximation property}
(bound on the error correction step) There exists a positive constant $\c$ independent of the discretization parameters such that:
\begin{equation} \label{approximation property}
\vertiii{(\Id_p - \Ipmup \Pppmu) \vp}_{0,\p} \le \c \frac{\alpha^*(\p)}{\alpha_*(\p)^{\frac{3}{2}}} \frac{\beta^*(\p)^{\frac{3}{2}}}{\beta_*(\p)} \vertiii{\vp}_{1,\p} \quad \forall \vp \in \Vp,
\end{equation}
where $\alpha_*(\p)$, $\alpha^*(\p)$ are introduced in \eqref{bounds stability constants} whereas $\beta_*(\p)$ and $\beta^*(\p)$ are introduced in \eqref{bounds space dependent inner product}.
\end{thm}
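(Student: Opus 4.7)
My plan is to bound $\eta := (\Id_\p - \Ipmup\Pppmu)\vp$ by first passing from the space-dependent norm $\vertiii{\cdot}_{0,\p}$ to the energy norm $\vertiii{\cdot}_{1,\p}$, and then controlling the energy norm of $\eta$ via a triangle inequality combined with the stability of the two interspace operators. For the first step I would apply the upper bound in \eqref{bounds space dependent inner product} followed by the lower bound in \eqref{stability formula} (summed over all $\E \in \tauh$) to obtain
\[
\vertiii{\eta}_{0,\p}^2 = (\eta,\eta)_\p \lesssim \beta^*(\p)\,|\eta|_{1,\Omega}^2 \lesssim \frac{\beta^*(\p)}{\alpha_*(\p)}\,\vertiii{\eta}_{1,\p}^2.
\]

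For the second step I would split $\vertiii{\eta}_{1,\p} \le \vertiii{\vp}_{1,\p} + \vertiii{\Ipmup\Pppmu\vp}_{1,\p}$ by the triangle inequality, and then chain Theorem \ref{theorem stability interspace operator} (applied to $\vpmu = \Pppmu \vp$) with Theorem \ref{theorem stability projection operator} (applied to $\vp$) to obtain
\[
\vertiii{\Ipmup\Pppmu\vp}_{1,\p} \le \cstab^2\,\frac{\alpha^*(\p)\beta^*(\p)}{\alpha_*(\p)\beta_*(\p)}\,\vertiii{\vp}_{1,\p}.
\]
A short sanity check gives $\alpha^*(\p)/\alpha_*(\p)\ge 1$ from \eqref{alpha and c} and $\beta^*(\p)/\beta_*(\p)\gtrsim 1$ by compatibility of the two bounds in \eqref{bounds space dependent inner product}, so the first summand $\vertiii{\vp}_{1,\p}$ from the triangle inequality is absorbed into the prefactor, yielding $\vertiii{\eta}_{1,\p}\lesssim \frac{\alpha^*(\p)\beta^*(\p)}{\alpha_*(\p)\beta_*(\p)}\vertiii{\vp}_{1,\p}$.

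Substituting this back into the first step would give
\[
\vertiii{\eta}_{0,\p} \lesssim \sqrt{\frac{\beta^*(\p)}{\alpha_*(\p)}}\cdot \frac{\alpha^*(\p)\beta^*(\p)}{\alpha_*(\p)\beta_*(\p)}\,\vertiii{\vp}_{1,\p} = \frac{\alpha^*(\p)}{\alpha_*(\p)^{3/2}}\,\frac{\beta^*(\p)^{3/2}}{\beta_*(\p)}\,\vertiii{\vp}_{1,\p},
\]
which is exactly \eqref{approximation property}. Since the prefactor does not decay in $\p$, this is not an approximation property in the classical sense but rather a continuity-type estimate; its role in the multigrid analysis will be to be paired with the smoothing property of Lemma \ref{lemma smoothing property}, where the gain $(1+\md)^{(t-s)/2}$ will supply the needed smallness for $\md$ large. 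I do not expect a genuine obstacle: once Theorems \ref{theorem stability bounds space dependent inner product}, \ref{theorem stability interspace operator} and \ref{theorem stability projection operator} are in hand, the argument is a direct chain of inequalities, and the only delicate point is the absorption step above, which the explicit scalings of $\alpha_*,\alpha^*,\beta_*,\beta^*$ make straightforward.
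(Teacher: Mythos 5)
Your proof is correct and follows essentially the same route as the paper's: both pass from $\vertiii{\cdot}_{0,\p}$ to $\vert\cdot\vert_{1,\Omega}$ to $\vertiii{\cdot}_{1,\p}$ via \eqref{bounds space dependent inner product} and \eqref{stability formula}, and then control $\Ipmup\Pppmu\vp$ by chaining Theorems \ref{theorem stability interspace operator} and \ref{theorem stability projection operator}. The only (harmless) difference is in bounding $\vertiii{(\Id_\p-\Ipmup\Pppmu)\vp}_{1,\p}$: the paper expands the square and drops the cross term, which is nonpositive by the adjointness \eqref{projection operator}, whereas you use the triangle inequality and absorb the first summand into the prefactor --- legitimate since $\alpha^*(\p)/\alpha_*(\p)\ge 1$ and $\beta^*(\p)/\beta_*(\p)\gtrsim 1$ --- and both variants yield the identical final constant.
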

\begin{proof}
Applying \eqref{stability formula} and \eqref{bounds space dependent inner product}, we have:
\[
\begin{split}
&\vertiii{(\Id_\p - \Ipmup \Pppmu) \vp}^2_{0,\p} \lesssim \beta^*(\p) \left \vert (\Id_\p - \Ipmup \Pppmu) \vp \right \vert_{1,\Omega}^2 \\
& \lesssim \beta^*(\p) \alpha_*(\p)^{-1} \sum_{\E \in \tauh}\left\{ \apE((\Id_\p- \Ipmup \Pppmu) \vp, (\Id_\p- \Ipmup \Pppmu) \vp  )  \right\}.
\end{split}
\]
Therefore, we deduce:
\[
\begin{split}
& \vertiii{(\Id_\p - \Ipmup \Pppmu)\vp}^2_{0,\p} \\
& \lesssim \beta^*(\p) \alpha_*(\p)^{-1} \sum_{\E \in \tauh} \left\{ \apE(\vp, \vp) + \apE(\Ipmup \Pppmu \vp, \Ipmup \Pppmu \vp) - 2 \apE(\vp, \Ipmup \Pppmu \vp)    \right\}\\
& = \beta^*(\p) \alpha_*(\p)^{-1} \sum_{\E \in \tauh} \left\{ \apE(\vp, \vp) + \apE(\Ipmup \Pppmu \vp, \Ipmup \Pppmu \vp) - 2 \apmuE(\Pppmu \vp, \Pppmu \vp)  \right\}\\
& \lesssim \beta^*(\p) \alpha_*(\p)^{-1} \sum_{\E \in \tauh} \left\{ \apE(\vp,\vp) + \frac{\alpha^*(\p) \beta^*(\p)}{\alpha_*(\p) \beta_*(\p)} \apE (\Pppmu \vp, \Pppmu \vp)   \right\},
\end{split}
\]
where in the last inequality we applied Theorem \ref{theorem stability interspace operator} and we dropped the third term since it is negative.
Finally, applying Theorem \ref{theorem stability projection operator}, we obtain:
\[
\vertiii {(\Id_\p -\Ipmup \Pppmu) \vp}^2_{0,\p} \lesssim \frac{\alpha^*(\p)^2}{\alpha_*(\p)^3} \frac{\beta^*(\p)^3}{\beta_*(\p)^2} \vertiii{\vp}^2_{1,\p},
\]
whence the claim.
\end{proof}
\subsection{Convergence of the two-level algorithm} \label{subsection convergence of the two level algorithm}
In this section, we prove the convergence of the two-level algorithm.
\begin{thm} \label{theorem convergence of the two level algorithm}
There exists a positive constant $\cdlvl$ independent of the discretization and multilevel parameters, such that:
\begin{equation} \label{two level contraction}
\vertiii{\mathbb E_{\p,\md}^{\text{2lvl}}\vp }_{1,\p} \le \cdlvl \Sigmapmd \vertiii{\vp}_{1,\p} \quad \forall \vp \in \Vp,
\end{equation}
where
\[
\Sigmapmd = \left( \frac{\alpha^*(\p) \beta^*(\p)}{\alpha_*(\p) \beta_*(\p)} \right)^{\frac{3}{2}} \cdot \frac{1}{\sqrt{1+\md}}
\]
and $\mathbb E_{\p,\md}^{\text{2lvl}}$ is the two-level error propagator operator:
\[
\mathbb E_{\p,\md}^{\text{2lvl}} \vp =\left[ \Gpmd  \left( \Id_\p - \Ipmup \Pppmu  \right)   \right] \vp.
\]
The constants $\alpha_*(\p)$ and $\alpha^*(\p)$ are introduced in \eqref{bounds stability constants}, whereas the constants $\beta_*(\p)$ and $\beta^*(\p)$ are introduced in \eqref{bounds space dependent inner product}.
\end{thm}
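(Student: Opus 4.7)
The plan is to combine the smoothing property from Lemma \ref{lemma smoothing property} with the approximation property from Theorem \ref{theorem approximation property}; this is the classical two–level decomposition for multigrid convergence, exactly as in the non–inherited analysis of \cite{BrennerScott, AntoniettiSartiVerani_2015}. The essential idea is to split the action of $\mathbb E_{\p,\md}^{\text{2lvl}} = \Gpmd\,(\Id_\p - \Ipmup \Pppmu)$ by first moving the error from $\vertiii{\cdot}_{1,\p}$ to $\vertiii{\cdot}_{0,\p}$ via the coarse–grid correction, then regaining one power of $\Ap^{1/2}$ through the smoother, paying the factor $(1+\md)^{-1/2}$ that drives the contraction.

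First I would apply Lemma \ref{lemma smoothing property} with the choice $s=1$, $t=0$, $\md \ge 1$, to the element $w = (\Id_\p - \Ipmup \Pppmu) \vp \in \Vp$. This gives
\[
\vertiii{\Gpmd\, w}_{1,\p} \;\lesssim\; \left( \frac{\alpha^*(\p)}{\beta_*(\p)} \right)^{1/2} (1+\md)^{-1/2}\, \vertiii{w}_{0,\p}.
\]
The hypotheses of the lemma are satisfied since $0 \le t \le s \le 2$ and the smoother $\Bp$ is the Richardson operator \eqref{smoothing operator definition} associated with the spectral bound $\Lambda_\p$ used in the derivation of \eqref{smoothing estimates}.

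Next I would insert the approximation–type bound \eqref{approximation property} for the coarse grid correction operator:
\[
\vertiii{(\Id_\p - \Ipmup \Pppmu)\vp}_{0,\p} \;\lesssim\; \frac{\alpha^*(\p)}{\alpha_*(\p)^{3/2}}\, \frac{\beta^*(\p)^{3/2}}{\beta_*(\p)}\, \vertiii{\vp}_{1,\p}.
\]
Chaining the two estimates yields
\[
\vertiii{\mathbb E_{\p,\md}^{\text{2lvl}} \vp}_{1,\p} \;\lesssim\; \frac{\alpha^*(\p)^{1/2}}{\beta_*(\p)^{1/2}} \cdot \frac{\alpha^*(\p)}{\alpha_*(\p)^{3/2}} \cdot \frac{\beta^*(\p)^{3/2}}{\beta_*(\p)} \cdot \frac{1}{\sqrt{1+\md}}\, \vertiii{\vp}_{1,\p}.
\]
Collecting exponents, the $\alpha^*(\p)$ factors combine to $\alpha^*(\p)^{3/2}$, $\alpha_*(\p)$ appears with exponent $-3/2$, $\beta^*(\p)$ with exponent $3/2$, and $\beta_*(\p)$ with exponent $-3/2$, which is precisely the grouping
\[
\left( \frac{\alpha^*(\p)\,\beta^*(\p)}{\alpha_*(\p)\,\beta_*(\p)} \right)^{3/2}.
\]
Renaming the hidden multiplicative constant as $\cdlvl$ produces the claim \eqref{two level contraction}.

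There is essentially no genuine obstacle in this step: the work has already been concentrated in Theorems \ref{theorem stability interspace operator}, \ref{theorem stability projection operator} and \ref{theorem approximation property}, and in Lemma \ref{lemma smoothing property}. The only point requiring a modicum of care is verifying that the indices $s=1$, $t=0$ are an admissible choice in \eqref{smoothing estimates} and that $\cdlvl$ can be taken independent of $\p$ and $\md$ (only the explicit $\p$-dependent factors $\alpha_*(\p), \alpha^*(\p), \beta_*(\p), \beta^*(\p)$ and the explicit $(1+\md)^{-1/2}$ appear in $\Sigmapmd$), both of which are immediate from the statements invoked.
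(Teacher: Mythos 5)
Your proposal is correct and follows exactly the paper's own argument: the paper likewise applies the smoothing property \eqref{smoothing estimates} with $s=1$, $t=0$ to $(\Id_\p - \Ipmup\Pppmu)\vp$ and then inserts the bound of Theorem \ref{theorem approximation property}, collecting the same exponents to obtain $\Sigmapmd$. Nothing further is needed.
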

\begin{proof}
Using the smoothing property \eqref{smoothing estimates} and Theorem \ref{theorem approximation property}, we get:
\[
\begin{split}
&\vertiii{\mathbb E_{\p,\md}^{\text{2lvl}} \vp} _{1,\p}	= \vertiii{\Gpmd (\Id_\p - \Ipmup \Pppmu)\vp}_{1,\p} \lesssim \frac{1}{\sqrt{1+\md}}\cdot  \sqrt{\frac{\alpha^*(\p)}{\beta_*(\p)}} \vertiii{(\Id_\p - \Ipmup \Pppmu)\vp}_{0,\p}\\
& \lesssim \frac{1}{\sqrt{1+\md}}\cdot  \sqrt{\frac{\alpha^*(\p)}{\beta_*(\p)}} \cdot  \frac{\alpha^*(\p)}{\alpha_*(\p)^{\frac{3}{2}}} \cdot  \frac{\beta^*(\p)^{\frac{3}{2}}}{\beta_*(\p)}\vertiii{\vp}_{1,\p}
	=  \left(\frac{\alpha^*(\p) \beta^*(\p)}{\alpha_*(\p) \beta_*(\p)} \right)^{\frac{3}{2}} \cdot \frac{1}{\sqrt{1 + \md}} \vertiii{\vp}_{1,\p}.
\end{split}
\]
\end{proof}
As a consequence of Theorem \ref{theorem convergence of the two level algorithm}, we deduce that taking $\md$, number of postsmoothing iterations large enough,
the two-level algorithm converges, since the two-level error propagator operator $\mathbb E_{\p,\md}^{\text{2lvl}}$ is a contraction.
We point out that a sufficient condition for the convergence of the two-level algorithm is that the number of postsmoothing iterations $\md$ must satisfy:
\begin{equation} \label{number postsmoothing 2 level}
\sqrt{1+\md} > \cdlvl^{-1} \left( \frac{\alpha^*(\p) \beta^*(\p)}{\alpha_*(\p) \beta_*(\p)}\right)^{\frac{3}{2}},
\end{equation}
see Remark \ref{remark post smoothing step} for more details.
We stress that \eqref{number postsmoothing 2 level} is a sufficient condition only, in practice
the number of postsmoothing steps needed for the convergence of the algorithm is much smaller; see numerical results in Section~\ref{section numerical result}.


\subsection{Convergence of the multilevel algorithm} \label{subsection convergence of the multilevel algorithm}
In this section, we prove the main result of the paper, namely the convergence of our $\p$-VEM multigrid algorithm.
\begin{thm} \label{theorem convergence of the multilevel algorithm}
Let $\Sigmapmd$ and $\cdlvl$ be defined as in Theorem \ref{theorem convergence of the two level algorithm}. Let $\cstab$ be defined as in Theorem \ref{theorem stability projection operator}.
Let $\alpha_*(\p)$ and $\alpha^*(\p)$ be defined in \eqref{stability formula} and $\beta_*(\p)$ and $\beta^*(\p)$ be defined in \eqref{stability bounds space dependent inner product}.
Then, there exists $\chat > \cdlvl$ such that, if the number of postsmoothing iterations satisfies:
\begin{equation} \label{assumption post smoothing convergence multigrid}
\sqrt{1 +\md} > \frac{\cstab^2 \chat^2}{\chat-\cdlvl} \left( \frac{\alpha^*(\p) \beta^*(\p)}{\alpha_*(\p) \beta_*(\p)}  \right)^{\frac{5}{2}} ,
\end{equation}
then, it holds:
\[
\Vert \mathbb E _{\p,\md} \vp \Vert_{1,\p} \le \chat \Sigmapmd \Vert \vp \Vert_{1,\p},
\]
with $\chat \Sigmapmd <1$.
As a consequence, this implies that the multilevel algorithm converges uniformly with respect to the discretization parameters and the number of levels provided that $\md$ satisfies \eqref{assumption post smoothing convergence multigrid},
since $\mathbb E_{\p,\md}$ is a contraction.
\end{thm}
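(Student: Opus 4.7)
The plan is to proceed by induction on the number of levels, i.e.\ on $\p$, following the classical W--cycle analysis in the style of \cite{BrennerScott} but adapted to the non-inherited sublevel solvers setting. The base case $\p=1$ is immediate from the definition \eqref{definition error operator}, since $\mathbb E_{1,\md}=0$ and the coarsest problem is solved exactly.

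For the inductive step, I would start by rewriting the error propagator \eqref{definition error operator} as the sum of the two--level error plus a correction coming from the inexact sublevel solve:
\[
\mathbb E_{\p,\md} \,=\, \Gpmd\bigl(\Id_\p - \Ipmup \Pppmu\bigr) \,+\, \Gpmd\, \Ipmup\, \mathbb E_{\p-1,\md}^{\,2}\, \Pppmu.
\]
The first summand is bounded directly by the two--level estimate of Theorem~\ref{theorem convergence of the two level algorithm}, yielding the contribution $\cdlvl\,\Sigmapmd\,\vertiii{\vp}_{1,\p}$. For the second summand I would chain the following ingredients, all of which are already at our disposal: first, the non--expansivity of $\Gpmd$ in $\vertiii{\cdot}_{1,\p}$ from Lemma~\ref{lemma smoothing property}; then the stability of the prolongation $\Ipmup$ of Theorem~\ref{theorem stability interspace operator}; then the induction hypothesis applied twice to $\mathbb E_{\p-1,\md}$ (this is exactly where the W--cycle squaring of the sublevel error is used); and finally the stability of $\Pppmu$ of Theorem~\ref{theorem stability projection operator}.

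Putting these bounds together gives
\[
\vertiii{\mathbb E_{\p,\md}\vp}_{1,\p} \,\le\, \Bigl(\cdlvl \Sigmapmd \,+\, \cstab^2\,\frac{\alpha^*(\p)\beta^*(\p)}{\alpha_*(\p)\beta_*(\p)}\,\chat^{\,2}\,\Sigmapmumd^{\,2}\Bigr)\vertiii{\vp}_{1,\p}.
\]
Recalling the explicit form of $\Sigmapmd$, one has
\[
\cstab^2\,\frac{\alpha^*(\p)\beta^*(\p)}{\alpha_*(\p)\beta_*(\p)}\,\Sigmapmumd^{\,2} \,\le\, \cstab^2 \left(\frac{\alpha^*(\p)\beta^*(\p)}{\alpha_*(\p)\beta_*(\p)}\right)^{5/2}\frac{1}{\sqrt{1+\md}}\,\Sigmapmd,
\]
where I would use monotonicity of the stability constants in $\p$ to majorize the $(\p-1)$--level factors by those at level $\p$. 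The induction step will close provided
\[
\cdlvl \,+\, \cstab^2\,\chat^{\,2}\left(\frac{\alpha^*(\p)\beta^*(\p)}{\alpha_*(\p)\beta_*(\p)}\right)^{5/2}\frac{1}{\sqrt{1+\md}} \,\le\, \chat,
\]
and solving for $\md$ yields precisely the sufficient condition \eqref{assumption post smoothing convergence multigrid}. Choosing $\chat>\cdlvl$ as in the statement makes the right--hand side positive so that such an $\md$ always exists. Finally, since the bounds \eqref{bounds stability constants} and \eqref{bounds space dependent inner product} are uniform in the number of levels (they depend only on the top polynomial degree $\p$), the resulting contraction factor $\chat\,\Sigmapmd<1$ is independent of how many sublevels are used, which gives the announced uniform convergence.

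The main obstacle is the careful bookkeeping of the polynomial factors in $\p$ coming from Theorems~\ref{theorem stability dof dof product} and \ref{theorem stability bounds space dependent inner product}: since all of $\alpha_*(\p)$, $\alpha^*(\p)$, $\beta_*(\p)$, $\beta^*(\p)$ enter to high powers (ultimately $5/2$ in the condition on $\md$), one must make sure the inequality used to pass from the $(\p-1)$--factors to the $\p$--factors goes in the right direction; this is why the slightly enlarged constant $\chat > \cdlvl$ is introduced, so that the positive gap $\chat-\cdlvl$ can absorb the $\sqrt{1+\md}^{-1}$ term uniformly. Once this algebraic manipulation is done, the inductive closure is straightforward.
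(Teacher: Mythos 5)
Your proposal is correct and follows essentially the same route as the paper's own proof: the identical splitting of $\mathbb E_{\p,\md}$ into the two--level part plus $\Gpmd\,\Ipmup\,\mathbb E_{\p-1,\md}^{2}\,\Pppmu$, the same chaining of the smoothing non--expansivity, prolongation stability, doubled induction hypothesis and projection stability, and the same algebraic rearrangement that produces condition \eqref{assumption post smoothing convergence multigrid}. The only cosmetic difference is that you invoke monotonicity of the stability constants to compare the $(\p-1)$-- and $\p$--level factors, where the paper appeals to the equivalence implied by \eqref{bounds stability constants} and \eqref{bounds space dependent inner product}; the substance is the same.
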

\begin{proof}
From Theorem \ref{theorem convergence of the two level algorithm}, we have that:
\[
\Sigmapmd = \left( \frac{\alpha^*(\p) \beta^*(\p)}{\alpha_*(\p) \beta_*(\p)}\right)^{\frac{3}{2}} \cdot  \frac{1}{\sqrt{1+\md}}.
\]
Recalling \eqref{definition error operator}, we decompose the error propagator operator as:
\[
\mathbb E_{p,\md} \vp = \Gp^{\md} (\Id_\p  - \Ipmup \Pppmu )\vp + \Gp^{\md} \Ipmup \mathbb E_{\p-1,\md}^2 \Pppmu \vp = \mathbb E_{\p,\md}^{\text{2lvl}} \vp + \Gp^{\md} \Ipmup \mathbb E_{\p-1,\md}^2 \Pppmu \vp.
\]
Thus:
\[
\vertiii{\mathbb E_{\p,\md} \vp} _{1,\p} \le  \vertiii{\mathbb E_{\p,\md}^{\text{2lvl}} \vp} _{1,\p} + \vertiii{\Gp^{\md} \Ipmup \mathbb E_{\p-1,\md}^2 \Pppmu \vp} _{1,\p} = I + II.
\]
We bound the two terms separately.
The first one is estimated directly applying the two-level error result, namely Theorem \ref{theorem convergence of the two level algorithm}:
\[
I \le \cdlvl \Sigmapmd \vertiii{\vp}_{1,\p}.
\]
On the other hand, the second term can be bounded applying the smoothing property Lemma \eqref{lemma smoothing property},
the bounds regarding the interspace operator Theorem \ref{theorem stability interspace operator}, the induction hypothesis
and Theorem \ref{theorem stability projection operator}.
We can write:
\[
\begin{split}
& II \le \vertiii{\Ipmup \mathbb E_{\p-1,\md}^2 \Pppmu \vp}_{1,\p} \le \cstab \sqrt{\frac{\alpha^*(\p) \beta^*(\p)}{\alpha_*(\p) \beta_*(\p)}} \vertiii{\mathbb E_{\p-1,\md}^2 \Pppmu \vp }_{1,\p-1}\\
& \le \cstab \sqrt{\frac{\alpha^*(\p) \beta^*(\p)}{\alpha_*(\p) \beta_*(\p)}} \chat ^2\Sigmapmumd^2 \vertiii{\Pppmu \vp}_{1,\p-1} \le \cstab^2 \chat^2  \frac{\alpha^*(\p) \beta^*(\p)}{\alpha_*(\p) \beta_*(\p)} \Sigmapmumd \vertiii{\vp}_{1,\p}.
\end{split}
\]
We note that owing to \eqref{bounds stability constants} and \eqref{bounds space dependent inner product}, the following holds true:
\[
\Sigmapmumd^2 =  \left( \frac{\alpha^*(\p-1) \beta^*(\p-1)}{\alpha_*(\p-1) \beta_*(\p-1)} \right)^3 \cdot \frac{1}{1+\md} \approx
		\left( \frac{\alpha^*(\p) \beta^*(\p)}{\alpha_*(\p) \beta_*(\p)} \right)^{\frac{3}{2}} \cdot \frac{1}{\sqrt{1+\md}} \Sigmapmd^2.
\]
We deduce:
\[
\vertiii{\mathbb E_{\p,\md} \vp}_ {1,\p} \le 
\underbrace{\left( \cdlvl + \cstab^2 \chat^2 \left( \frac{\alpha^*(\p) \beta^*(\p)}{\alpha_*(\p) \beta_*(\p)} \right)^{\frac{5}{2}} \cdot \frac{1}{\sqrt{1+\md}}   \right) \Sigmapmd}_{\bbeth} \vertiii{\vp}_{1,\p}.
\]
We want that $\bbeth$ is such that $\bbeth < \chat \Sigmapmd$. In particular,  we require:
\[
\cdlvl + \cstab^2 \chat^2 \left( \frac{\alpha^*(\p) \beta^*(\p)}{\alpha_*(\p) \beta_*(\p)} \right)^{\frac{5}{2}}\cdot \frac{1}{\sqrt{1+\md}}  < \chat,
\]
which is in fact equivalent to \eqref{assumption post smoothing convergence multigrid}.
\end{proof}

\begin{remark} \label{remark post smoothing step}
We briefly comment on equations \eqref{number postsmoothing 2 level} and \eqref{assumption post smoothing convergence multigrid} highlighting the origin of the different terms:
\begin{itemize}
\item[*] 
the term $\frac{\alpha^*(\p)}{\alpha_*(\p)} \approx \p^{10}$ originates from the spectral property \eqref{stabilization abstract} of the stabilization term $\SE$;
if it were possible to provide a discrete bilinear form \eqref{stability formula} with continuity and coercivity constants provably independent of $\p$, then $\frac{\alpha^*(\p)}{\alpha_*(\p)} \approx 1$;
\item[*] the term $\frac{\beta_*(\p)}{\beta_*(\p)} \approx \p^{6}$ is related to \eqref{bounds space dependent inner product} which is not $\p$ robust;
again, if it were possible to provide space-dependent inner products spectrally equivalent to the $H^1$ seminorm, then $\frac{\beta_*(\p)}{\beta_*(\p)} \approx 1$.
\end{itemize}
The existence of a $\p$ independent stabilization of the method and the existence of a computable virtual $L^2$-orthonormal basis is still, at the best of the authors knowledge, an open issue.\\
\end{remark}

\section{Numerical results} \label{section numerical result}
In this section, we test the performance of the multigrid solver for the $\p$-version of the VEM by studying the behaviour of the
convergence factor:
\begin{equation} \label{convergence factor}
\rho = \exp \left( \frac{1}{N} \ln \left( \frac{\Vert r_N \Vert_2}{\Vert r_0 \Vert_2}  \right)  \right),
\end{equation}
where $N$ denotes the iteration counts needed to reduce the residual below a given tolerance  of $10^{-8}$ and $r_N$, $r_0$ are the final and the initial residuals, respectively. We also show that our multigrid algorithm can be employed as a preconditioner for the PCG method. 
Throughout the section we fix the maximum number of iterations to $1000$ and consider three different kind of decompositions: meshes made of squares, Voronoi-Lloyd polygons and quasi-regular hexagons; cf. Figure \ref{meshes employed}.
\begin{figure}  [h]
\centering
\subfigure {\includegraphics [angle=0, width=0.32\textwidth]{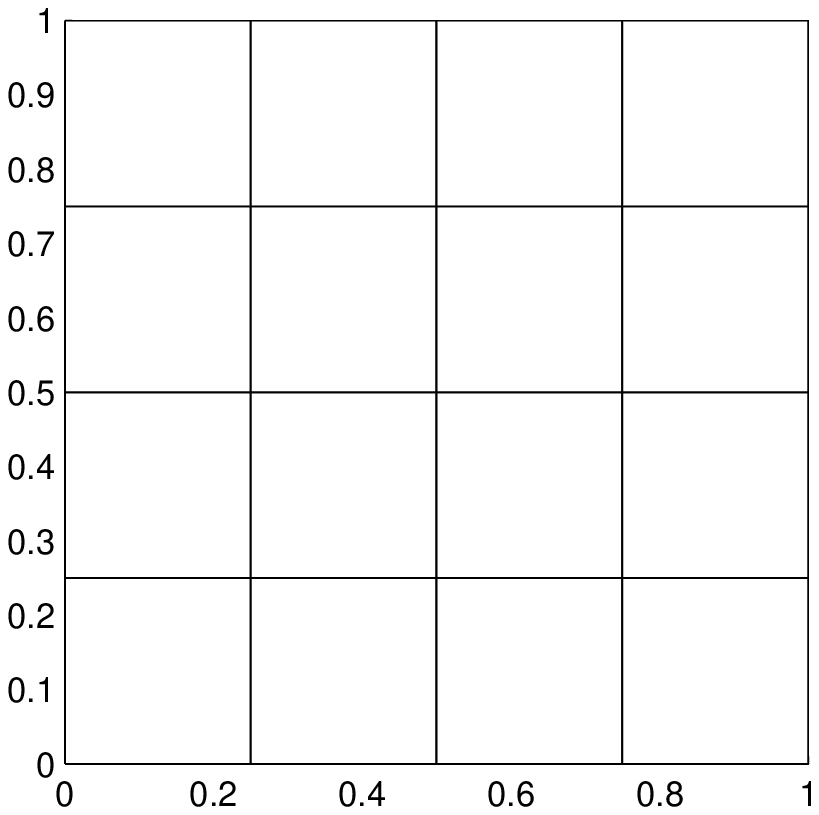}}
\subfigure {\includegraphics [angle=0, width=0.32\textwidth]{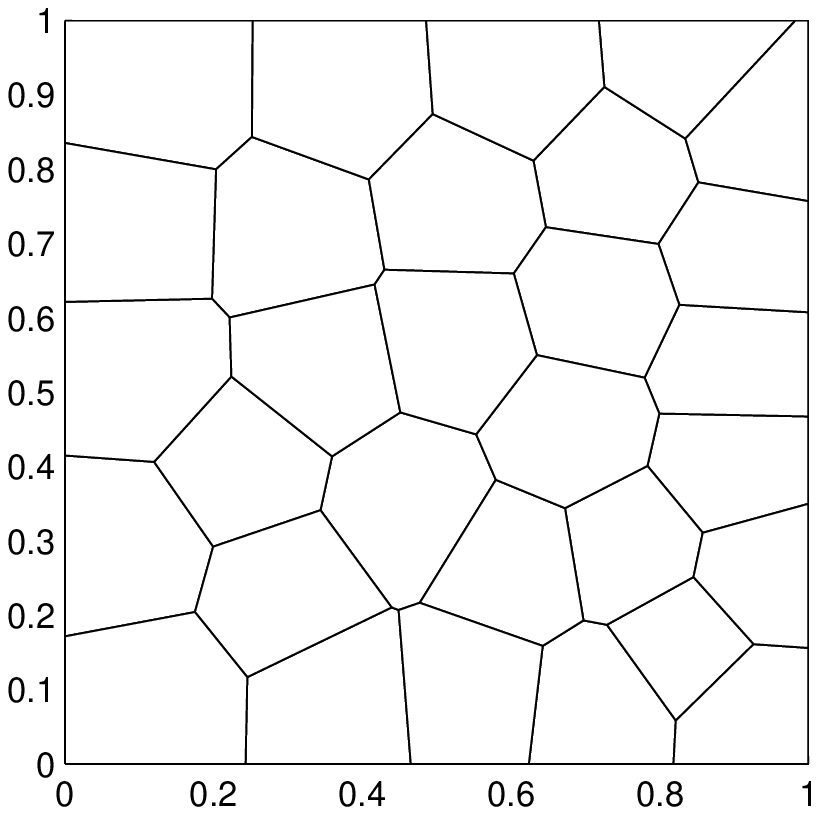}}
\subfigure {\includegraphics [angle=0, width=0.32\textwidth]{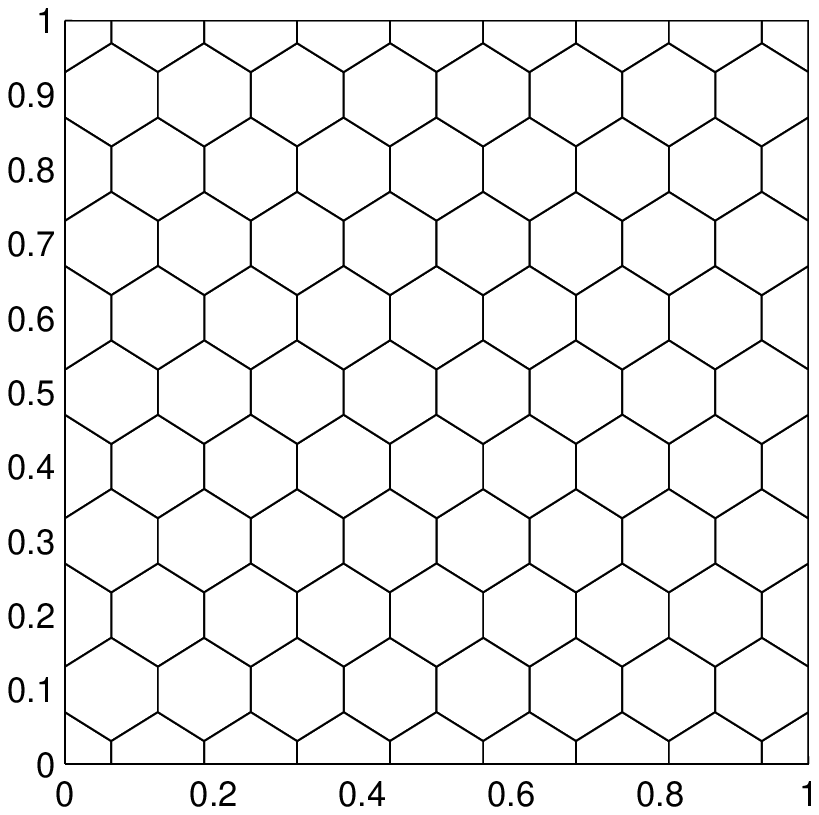}}
\caption{Meshes made of: squares (left), Voronoi-Lloyd polygons (centre), quasi-regular hexagons (right).} \label{meshes employed}
\end{figure}
In Section~\ref{sec:MG}, we present some tests aiming at assessing the performance of our multigrid scheme with different smoothers. In Section~\ref{sec:PCG} we show that our multigrid method can be successfully employed as a preconditioner for the Congiugate Gradient (CG) iterative scheme, more precisely we consider a single iteration of the multigrid algorithm as a preconditioner to accelerate the Preconditioned CG method.

\subsection{The $\p$--multigrid algorithm as an iterative solver} \label{sec:MG}
In this section we investigate the performance of our multigrid scheme with different smoothers. We consider both the Richardson scheme \eqref{smoothing operator definition} as well as  a symmetrized Gau\ss-Seidel scheme as a smoother.

The first set of numerical experiment has been obtained based on employing a Richardson smoother. Before presenting the computed estimates of the convergence factor, we investigate numerically the behaviour of the smoothing parameter $\Lambda_\p$ associated with the Richardson scheme \eqref{smoothing operator definition}, for which a far-from-being-sharp bound is given in Lemma \ref{lemma spectrum Ap}.
As shown in Figure \ref{figure spectral upper bound}, where $\Lambda_\p$ as a function of $\p$ is shown, the maximum eigenvalue of $\Kpbold$
seems to behave even better than $\p^{2}$, which is the expected behaviour in standard Finite Elements. The numerical tests presented in the following have been obtained with
an approximation of $\Lambda_\p$ obtained (in a off line stage) with ten iterations of the power method.
\begin{figure}  [h]
\centering
\includegraphics [angle=0, width=0.7\textwidth]{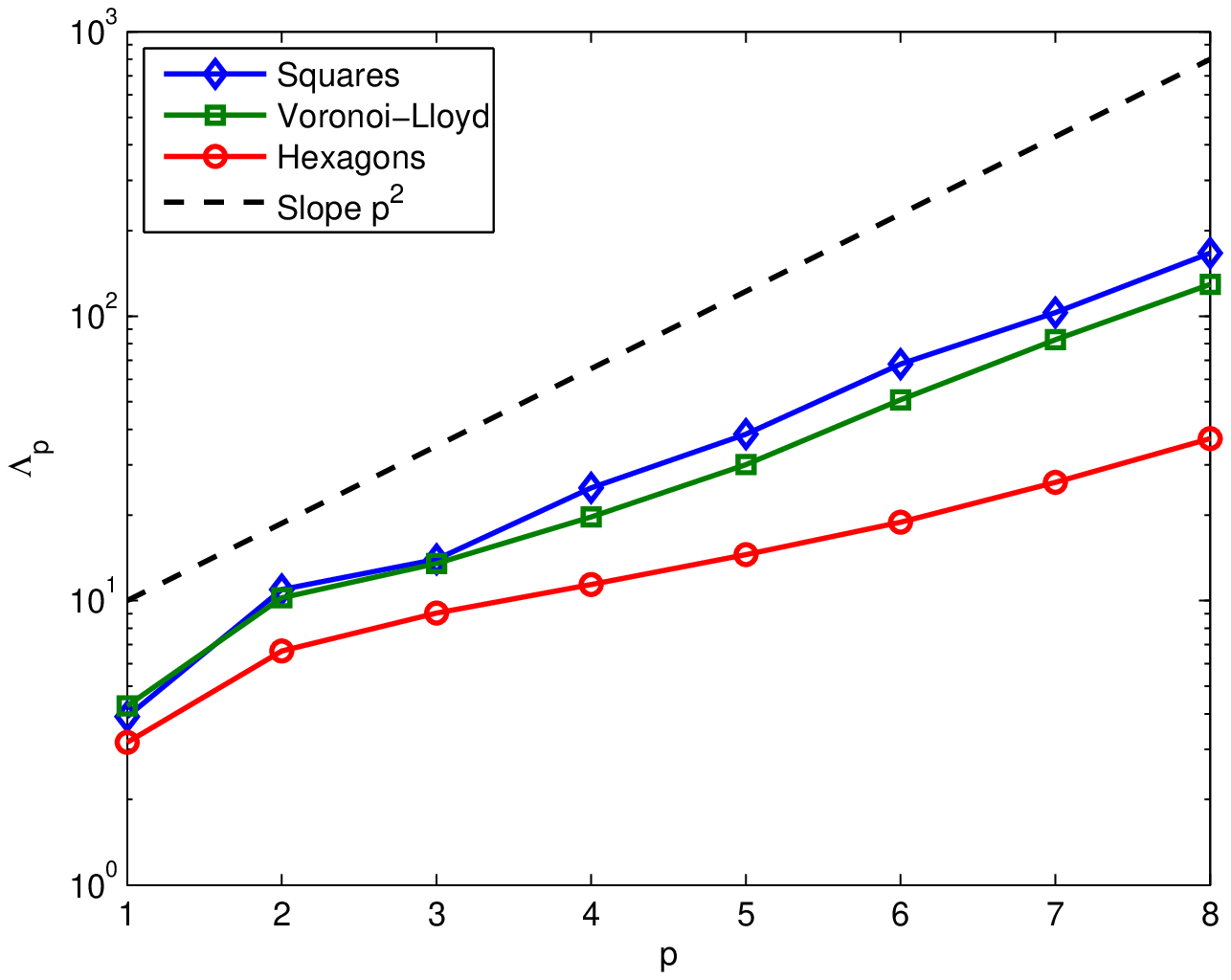}
\caption{Maximum eigenvalue $\Lambda_\p$ of $\Kpbold$  as a function of $\p$.}
\label{figure spectral upper bound}
\end{figure}
In this section, we numerically investigate the behaviour of the multigrid algorithm using a Richardson smoother. 
The results reported in Table \ref{tabular squares} shows the computed convergence factor defined in $\rho$ \eqref{convergence factor} as a function of the number of level $K$, the number of postsmoothing steps $\md=m$, 
and the degree of accuracy $\p$ employed at the ``finest level'' on a mesh made of squares, cf. Figure \ref{meshes employed}.
Analogous results have been obtained on the other decompositions; such results are not reported here for the sake of brevity.
As expected, increasing the number of postsmoothing $\md$ implies a decreasing of the convergence factor $\rho$. Moreover, a minimum number of smoothing steps is required to guarantee the convergence of the underlying solver.
We also observe that, as expected, even though both two-level and multilevel solvers converge for a fixed value of $m$, the number of iterations required to reduce the relative residual below the given tolerance grows with increasing $p$.
\begin{table}[!htpb]
\centering
\begin{tabular}{|c|c|cc|cc|cc|}
\multicolumn{1}{c|}{} &	\multicolumn{1}{c|}{$p=2$} 	& \multicolumn{2}{c|}{$p=3$} & \multicolumn{2}{c|}{$p=4$}
& \multicolumn{2}{c|}{$p=5$} \\
\hline
K & 2 & 2 & 3 & 3 & 4& 3 & 4\\	 	
 \hline
\multicolumn{1}{|c|}{$\md=2$} & 0.99 & x     & 0.97 & x      & 0.97 & x       & x\\
\multicolumn{1}{|c|}{$\md=4$} & 0.97 & x      & 0.95 & x      & 0.92 & x       & x\\
\multicolumn{1}{|c|}{$\md=6$} & 0.96 & 0.93 & 0.92 & 0.79 & 0.88 & x      & 0.85\\
\multicolumn{1}{|c|}{$\md=8$} & 0.95 & 0.69 & 0.89  & 0.74 & 0.84& 0.98 & 0.82\\
\hline
\end{tabular}
	\caption {Convergence factor $\rho$ of the $\p$--multigrid scheme as a function of $K$  (number of levels), $p$ (``finest'' level) 
	and $\md$ (number of postsmoothing steps). Richardson smoother. Mesh of squares.} 
	\label{tabular squares}
\end {table}
A numerical estimate of the minimum number of postsmoothing steps needed in practice to achieve convergence is reported in Table \ref{tabular minimum number of iterations} for all the meshes depicted in Figure \ref{meshes employed}. This represents a \emph{practical} indication for \eqref{assumption post smoothing convergence multigrid}
As expected, such a minimum number depends on the polynomial degree  employed in the finest level.
\begin{table}[H]
\centering
	\begin{tabular}{|c|c|cc|ccc|ccc|ccc|}
\multicolumn{1}{c|}{} 
&	\multicolumn{1}{c|}{$p=2$}  
& \multicolumn{2}{c|}{$p=3$}  
& \multicolumn{3}{c|}{$p=4$}  
& \multicolumn{3}{c|}{$p=5$} 
& \multicolumn{3}{c|}{$p=6$} \\
\hline
K & 2 & 2 & 3 & 2 & 3 & 4 & 2 & 3 & 4 & 2 & 3 & 4 \\
\hline
\text{Square}           & 1 & 6 & 1 & 10 & 5    & 1   & 14 & 8 & 5 & 42 & 15 & 8\\
\text{Voronoi-Lloyd} & 7 & 14 & 5 & 12 & 11 & 5 & 14 & 10 & 11 & 36 & 24 & 9\\
\text{Hexagons}      & 7 & 25 & 6 & 12 & 20 & 5 & 9 & 10 & 19 & 17 & 7 & 9\\
\hline
\end{tabular}
\caption {Minimum number of postsmoothing steps needed to guarantee convergence.} 
\label{tabular minimum number of iterations}
\end {table}

We next investigate the behaviour of our MG algorithm whenever a symmetrized Gau\ss-Seidel scheme as a smoother is employed. 
We recall that the smoothing matrix $\Bp$  associated with the  symmetrized Gau\ss-Seidel operator  now reads:
\begin{equation} \label{Gauss Lobatto scheme}
\mathbf{B_{\p}}= \begin{cases} \Lpbold \text{ if the postsmoothing iteration is odd}\\ 
\mathbf L_{\mathbf \p}^T \text{ if } \text{ if the postsmoothing iteration is even}\\ \end{cases}
\end{equation}
where $\Lpbold $ is the lower triangular part of $\Kpbold$. We have repeated the set of experiments carried out before employing the same same set of parameters: the results are are shown in Tables \ref{tabular squares GS}, \ref{tabular voronoilloyd GS}. 
As expected, employing a symmetrized Gau\ss-Seidel smoother yields to an iterative scheme whose convergence factor
is smaller than in the analogous cases with the Richardson smoother. In Table \ref{tabular voronoilloyd GS} we report the same results obtained on a 
mesh of Voronoi-Lloyd polygonal elements keep on increasing the number of post smoothing steps: as expected the performance of the algorithm improves further. The same kind of results have been obtained on a regular hexagonal grid; for the sake of brevity these results have been omitted.
\begin{table}[H]
\centering
\begin{tabular}{|c|c|cc|cc|cc|}
\multicolumn{1}{c|}{} &	\multicolumn{1}{c|}{$p=2$} 	& \multicolumn{2}{c|}{$p=3$} & \multicolumn{2}{c|}{$p=4$}
& \multicolumn{2}{c|}{$p=5$} \\
\hline
K & 2 & 2 & 3 & 3 & 4& 3 & 4 \\
		\hline
		m= 2      &  0.96 & 0.90 & 0.92  & x      & 0.75 & 0.97  & x      \\
		m= 4      &  0.92 & 0.69 & 0.85  & 0.57 & 0.57 & 0.72  & x      \\
		m = 6     &  0.88 & 0.60 & 0.78  & 0.43 & 0.44 & 0.60 & 0.85 \\
		m = 8     &  0.84 & 0.53 & 0.72  & 0.34 & 0.35 & 0.53 & 0.82\\
		\hline
		\end{tabular}
	\caption {Convergence factor $\rho$ of the $\p$--multigrid scheme as a function of $K$  (number of levels), $P$ (``finest'' level) and $\md$ (number of postsmoothing steps). Gau\ss-Seidel smoother. Mesh of squares.} \label{tabular squares GS}
\end {table}
\begin{table}[H]
\centering
\begin{tabular}{|c|c|cc|cc|cc|}
\multicolumn{1}{c|}{} 
&	\multicolumn{1}{c|}{$p=2$} 	
& \multicolumn{2}{c|}{$p=3$} 
& \multicolumn{2}{c|}{$p=4$}
& \multicolumn{2}{c|}{$p=5$} \\
\hline
K & 2 & 2 & 3 & 3 & 4& 3 & 4 \\
\hline
$m= 8$      & 0.91 & 0.63 & 0.81 &  0.45 & 0.61 & 0.49 & 0.46\\
$m= 10 $   & 0.89 & 0.57 & 0.77 &  0.37 & 0.54 &  0.44 & 0.43 \\
$m = 12$   & 0.87 & 0.52 & 0.73 &  0.31 & 0.47 & 0.40 & 0.40 \\
$m = 14$   & 0.86 & 0.48 & 0.69 &  0.25 & 0.42 &  0.37 & 0.37 \\
\hline
\end{tabular}
\caption {Convergence factor $\rho$ of the $\p$--multigrid scheme as a function of $K$  (number of levels), $p$ (``finest'' level) and $\md$ (number of postsmoothing steps). Gau\ss-Seidel smoother. Mesh of Voronoi-Lloyd polygons.} \label{tabular voronoilloyd GS}
\end {table}

\subsection{The $\p$--multigrid algorithm as a preconditioner for the PCG method} \label{sec:PCG}
In this set of experiments we aim at demonstrating that  \emph{a single iteration} of the $\p$--multigrid algorithm can be successfully employed  to precondition the CG method. In this set of experiments, the coarsest level is given by $\p=1$. In all the test cases, we have empoyed as a stopping criterion in order to reduce the (relative) residual below a tolerance of $10^{-6}$, with a maximum number of iterations set equal to $1000$.
In Figure \ref{figure PCG}, we compare the PCG iteration counts with our multigrid preconditioner, which is constructed employing either a Richardson or a Gau\ss-Seidel smoother and $m=8$ post-smoothing steps. For the sake of comparison, we report the same quantities computed with the unpreconditioned CG method and with the PCG method with preconditioner given by an incomplete Cholesky factorization. 
As before, the results reported in Figure \ref{figure PCG} have been obtained on the computational grids depicted in Figure \ref{meshes employed}. 
\begin{figure}  [h]
\centering
\subfigure {\includegraphics [angle=0, width=0.49\textwidth]{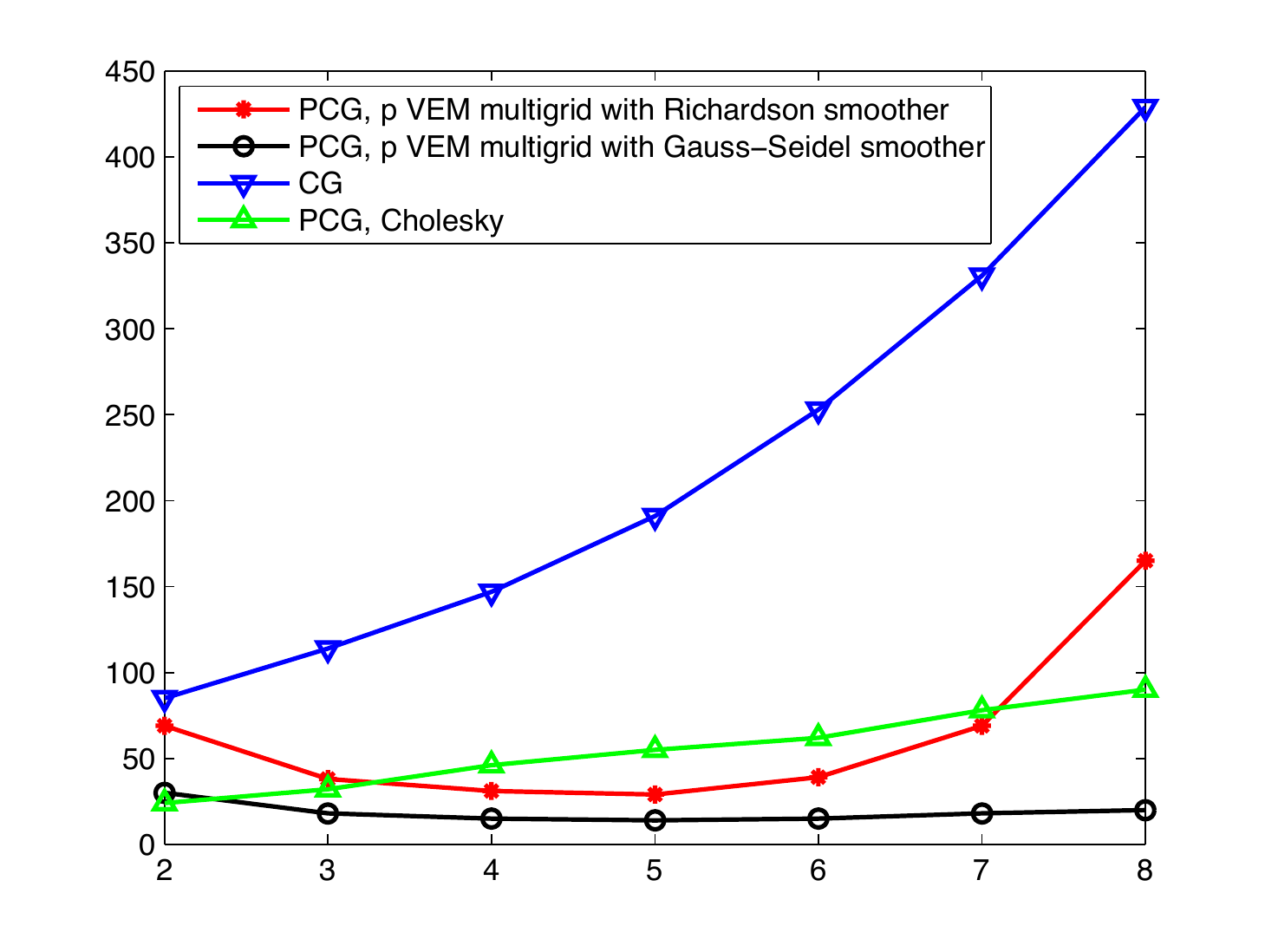}}
\subfigure {\includegraphics [angle=0, width=0.49\textwidth]{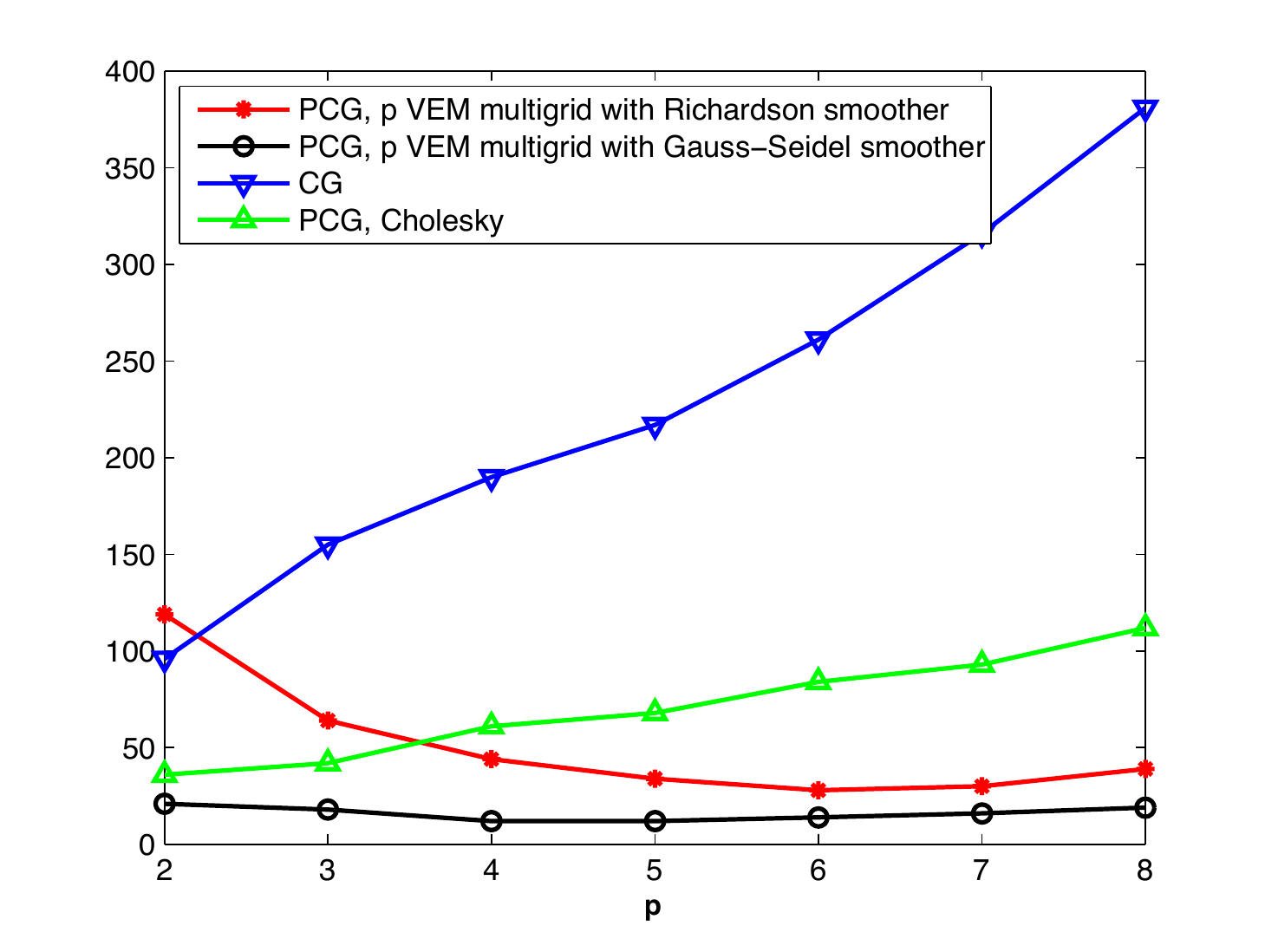}}
\caption{PCG iteration counts as a function of $p$ with $p$--multigrid preconditioner (with either Richardson or Gau\ss-Seidel smoothers). 
For the sake of comparison the CG iteration counts without preconditioning and with an incomplete Cholesky preconditioner are also shown. 
For the $p$--multigrid preconditioner, the coarsest level is $\p=1$  and the number of post-smoothing steps is $8$.
Meshes made of: Voronoi-Lloyd polygons (left), quasi-regular hexagons (right).} \label{figure PCG}
\end{figure}
From Figure \ref{figure PCG}, we infer that PCG iteration counts needed to reduce the residual below a given tolerance seems to be almost 
constant whenever the multigrid preconditioner with Gau\ss-Seidel smoother  is employed, even for a relatively small number of smoothing steps.
In contrast, as expected, the incomplete Cholesky preconditioner does not provide a uniform preconditioner.
Also,  the multigrid preconditioner with Richardson smoother seems to perform well at least on regular hexagonal grids.

\bibliographystyle{abbrv}
\bibliography{biblio_paola,bibliogr}

\end{document}